\def\acts{\curvearrowleft}
\newcommand{\F}{\mathbf{F}}
\newcommand{\CC}{\mathbf{C}}
\newcommand{\G}{\mathbf{G}}
\newcommand{\wt}[1]{\widetilde{#1}}
\newcommand{\Q}{\mathbf{Q}}
\newcommand{\Z}{\mathbf{Z}}
\newcommand{\mf}[1]{\mathfrak{#1}}
\newcommand{\Gal}{\operatorname{Gal}}
\newcommand{\R}{\mathbf{R}}
\newcommand{\ul}[1]{\underline{#1}}
\newcommand{\ol}[1]{\overline{#1}}
\newcommand{\wh}[1]{\widehat{#1}}
\newcommand{\mbb}[1]{\mathbb{#1}}
\newcommand{\Cal}[1]{\mathcal{#1}}
\newcommand{\A}{\mathbf{A}}
\newcommand{\co}{\colon}
\newcommand{\mrm}[1]{\mathrm{#1}}
\newcommand{\msf}[1]{\mathsf{#1}}
\newcommand{\bs}{\backslash}
\newcommand{\TT}{\mathbf{T}}
\newcommand{\LL}{\mathbb{L}}
\newcommand{\scr}[1]{\mathscr{#1}}
\newcommand{\surj}{\twoheadrightarrow}
\newcommand{\dotimes}{\stackrel{\mrm{L}}{\otimes}}
\newcommand{\bu}{\bullet}
\DeclareMathOperator{\GL}{GL}
\DeclareMathOperator{\Frob}{Frob}
\DeclareMathOperator{\ab}{ab}
\DeclareMathOperator{\Hom}{Hom}
\DeclareMathOperator{\rank}{rank}
\DeclareMathOperator{\Rep}{Rep}
\DeclareMathOperator{\Spec}{Spec\,}
\DeclareMathOperator{\Lie}{Lie}
\DeclareMathOperator{\End}{End}
\DeclareMathOperator{\Frac}{Frac}
\DeclareMathOperator{\Stab}{Stab}
\DeclareMathOperator{\Bun}{Bun}
\DeclareMathOperator{\Ext}{Ext}
\DeclareMathOperator{\Ad}{Ad}
\DeclareMathOperator{\Gr}{Gr}
\DeclareMathOperator{\Sym}{Sym}
\DeclareMathOperator{\pt}{pt}
\DeclareMathOperator{\Fib}{Fib}
\DeclareMathOperator{\Hecke}{Hk}
\DeclareMathOperator{\Sat}{Sat}
\DeclareMathOperator{\res}{res}
\DeclareMathOperator{\Hk}{Hk}
\DeclareMathOperator{\loc}{loc}
\DeclareMathOperator{\glob}{glob}
\DeclareMathOperator{\QCoh}{QCoh}
\DeclareMathOperator{\LocSys}{LocSys}
\DeclareMathOperator{\Ran}{Ran}
\DeclareMathOperator{\Dmod}{Dmod}
\DeclareMathOperator{\IndCoh}{IndCoh}
\DeclareMathOperator{\Nilp}{Nilp}
\DeclareMathOperator{\spec}{spec}
\DeclareMathOperator{\crys}{crys}
\DeclareMathOperator{\Tor}{Tor}
\DeclareMathOperator{\Lift}{Lift}
\DeclareMathOperator{\ur}{ur}
\DeclareMathOperator{\aut}{aut}
\newcommand{\tony}[1]{{\color{blue} \sf
    $\spadesuit\spadesuit\spadesuit$ TONY: [#1]}}
\newtheorem{thm}{Theorem}[section]
\newtheorem{lemma}[thm]{Lemma}
\newtheorem{prop}[thm]{Proposition}
\newtheorem{cor}[thm]{Corollary}
\theoremstyle{remark}
\newtheorem{remark}[thm]{Remark} 
\newtheorem{defn}[thm]{Definition}
\newtheorem{example}[thm]{Example}
\def\th@remark{%
  \thm@headfont{\bfseries}%
  \normalfont 
  \thm@preskip \thm@preskip 
  \thm@postskip\thm@preskip
}
\def\imod#1{\allowbreak\mkern5mu({\operator@font mod}\,\,#1)}
\numberwithin{equation}{subsection}
\title{The spectral Hecke algebra}
\author{Tony Feng}
\begin{document} 

\begin{abstract}
We introduce a derived enhancement of local Galois deformation rings that we call the 	``spectral Hecke algebra", in analogy to a construction in the Geometric Langlands program. This is a Hecke algebra that acts on the \emph{spectral} side of the Langlands correspondence, i.e. on moduli spaces of Galois representations. We verify the simplest form of \emph{derived} local-global compatibility between the action of the spectral Hecke algebra on the derived Galois deformation ring of Galatius-Venkatesh, and the action of Venkatesh's derived Hecke algebra on the cohomology of arithmetic groups. 
\end{abstract}

\maketitle

\tableofcontents

\section{Introduction}

\subsection{Motivation}
Venkatesh and collaborators have recently introduced a number of objects -- the local derived Hecke algebra, the global derived Hecke algebra, and the (global) derived Galois deformation ring -- in order to study algebraic structures in the cohomology of locally symmetric spaces \cite{V}, \cite{PV}, \cite{GV}. However, it was suspected that there was a missing chapter in this story, which should fill in the entry ``???" in the table below. 

\begin{center}
\begin{tabular}{| l | l  l |}
\hline
& \textbf{Automorphic} & \textbf{Galois} \\
\hline 
\textbf{Local}  & \scalebox{0.9}{derived Hecke algebra} & ??? \\
\textbf{Global} &\scalebox{0.9}{cohomology of locally symmetric space} & \scalebox{0.9}{derived Galois deformation ring} \\
\hline
\end{tabular}
\end{center}

The purpose of this paper is to suggest an answer, which we call the ``spectral Hecke algebra'', that fills in this lacuna. As the table suggests, the spectral Hecke algebra is an object that ``acts" on the derived Galois deformation functor of \cite{GV}, in a manner parallel to the action of the (local) derived Hecke algebra on the cohomology of locally symmetric spaces.

\subsection{The idea of the construction}
The spectral Hecke algebra takes it name and construction from Geometric Langlands theory, which predicts a relation between the moduli stack of $G$-bundles on a complex curve $X$, and the moduli stack of $\wh{G}$-local systems on $X$. These are the analogues of the ``automorphic side'' and ``Galois side'', respectively, of the (arithmetic) Langlands correspondence, which predicts a relation between automorphic representations of $G$ and Galois representations into $\wh{G}$. A key aspect of this correspondence is the \emph{local-global compatibility}, which in a minimalistic form asks for ``Hecke eigenvalues'' of an automorphic representation to match the ``Frobenius eigenvalues'' of the corresponding Galois representation. 
	
In Geometric Langlands one still has a notion of Hecke operators, but of course there is no ``Frobenius'', so how does one formulate local-global compatibility in that context? The answer is that there is also a notion of ``Hecke operator'' on the moduli stack of local systems, coming from an object called the ``spectral Hecke stack'' \cite[\S 12.3]{AG15}. Its definition can be phrased to appear completely symmetric to that of the Hecke stack on the automorphic side. 
\begin{itemize}
\item The automorphic Hecke stack, informally speaking, classifies 

\begin{quote}
``Two $G$-bundles on a disk (around a point of the curve), together with an isomorphism of their restrictions to the punctured disk''.
\end{quote}

\item The spectral Hecke stack, informally speaking, classifies

\begin{quote}
``Two $\wh{G}$-local systems on a disk (around a point of the curve), together with an isomorphism of their restrictions to the punctured disk''.
\end{quote}

\end{itemize}

Although these descriptions seem parallel, they are qualitatively quite different: the second description is highly \emph{redundant}, because the isomorphism of the restrictions to the punctured disk must \emph{automatically} extend to the entire disk. Therefore, if one interprets the definition na\"{i}vely, it is just the same information as that of a single $\wh{G}$-local system (and no additional structure). However, if one interprets the definition in a \emph{derived} way, then the resulting derived enhancement admits an interesting action on the moduli space of global $\wh{G}$-local systems. The ``local-global compatibility'' in the context of Geometric Langlands stipulates that this action should be compatible with the action of the automorphic Hecke stack on the moduli stack of global $G$-bundles.

In the arithmetic context, the object analogous to the spectral Hecke stack should classify 
\begin{quote}
``two $\pi_1(\Z_{q})$-representations, together with an isomorphism of their restrictions to $\pi_1(\Q_{q})$.''
\end{quote}
Again it is clear that this is redundant when interpreted na\"{i}vely, but again we can interpret it in a derived way, as follows. The space of $\pi_1(\Z_{q})$-representations can be viewed as a closed substack of the space of $\pi_1(\Q_{q})$-representations, and we can form its \emph{derived} self-intersection, which will be a derived stack. The spectral Hecke algebra is obtained by performing this type of construction on framed (so as to obtain something representable) Galois deformation rings.

\subsection{What is done in this paper?} 

The main objectives of this paper are to:
\begin{enumerate}
\item Define the spectral Hecke algebra, and construct a \emph{co-action} of it on the derived Galois deformation ring from \cite{GV}. 
\item Compare the co-action of the spectral Hecke algebra on the derived Galois deformation ring  with the action of the derived Hecke algebra on the cohomology of arithmetic groups, which was studied in \cite{V}. Informally speaking, our results show that these two actions are ``compatible'' in a manner analogous to the formulation of local-global compatibility in Geometric Langlands. 
\end{enumerate}

We now introduce some notation in order to state our findings more precisely. 

\subsubsection{The automorphic side} Let $G$ be a split, semisimple, simply connected group over $\Q$. We have a system of locally symmetric spaces $Y(K)$ for $G$, indexed by the level structure $K \subset G(\A_{\Q})$. Let $\TT_K$ be the Hecke algebra acting on $H^*(Y(K); \Z_p)$, generated by Hecke operators at ``good primes''. 

We view $H^*(Y(K); \Z_p)$, and more precisely the Hecke eigensystems it carries, as an incarnation of ``automorphic forms''. Let $\chi \co \TT_K \rightarrow \ol{\Q}$ be a tempered character of $\TT_K$, and $\mf{m} = \ker \chi$. The completion $H^*(Y(K); \ol{\Q})_{\mf{m}}$ is known to be supported in a band of degrees $[j_0, j_0 + \delta]$, where $\delta = \rank G(\R)-\rank K_{\infty}$ and $j_0$ is such that $2j_0 + \delta = \dim Y(K)$. (The integers $j_0$ and $\delta$ are typically called $q_0$ and $\ell_0$ in the literature, following \cite{CG18}.) Moreover, it enjoys the following suggestive numerology:
\[
\dim_{\ol{\Q}} H^{j_0+j}(Y(K); \ol{\Q})_{\mf{m}}  = k \binom{\delta}{j} \text{ for some $k>0$}.
\]

After passing to a finite extension $\Cal{O}/\Z_p$ containing the values of $\chi$, we can consider the completion $H^*(Y(K); \Cal{O})_{\mf{m}}$. Following \cite{GV} and \cite{V}, we restrict our attention to primes $p$ where the cohomology $H^*(Y(K); \Cal{O})_{\mf{m}}$ is particularly nice. In particular, we assume that there are ``no congruences at $p$'' (which, in particular, implies $k=1$), and that this cohomology is torsion-free; see \S \ref{ssec: automorphic side} for the details. Under these assumptions, we even have that $H^*(Y(K); \Cal{O})_{\mf{m}}$ is free over $\Cal{O}$, and that 
\begin{equation}\label{eq: multiplicity numerology}
\rank_{\Cal{O}} H^{j_0+j}(Y(K); \Cal{O})_{\mf{m}}  =  k \binom{\delta}{j}.
\end{equation}

Under these assumptions, Venkatesh shows in \cite{V} that this spread of the eigensystem $\mf{m}$ in cohomological degrees can be accounted for by a \emph{derived} Hecke action. More precisely, he studies (local) derived Hecke algebras $\Cal{H}_q$ indexed by certain (Taylor-Wiles) primes $q$, and shows that their action on the lowest degree cohomology $H^{j_0}(Y(K);\Cal{O})_{\mf{m}} \otimes_{\Cal{O}} \Lambda$ generates the entirety of $H^{*}(Y(K); \Cal{O})_{\mf{m}}\otimes_{\Cal{O}} \Lambda $ for any finite quotient $\Lambda$ of $\Cal{O}$. In this way he produces (see \ref{sssec: global DHA} for more detail) a ``global derived Hecke algebra'' $\wt{\mbb{T}}_{\mf{m}} \subset \End_{\Cal{O}}(H^*(Y(K); \Cal{O})_{\mf{m}} )$ whose action on $H^{*}(Y(K); \Cal{O})_{\mf{m}} $ is free. 


\subsubsection{The Galois side} Conjecturally, the Hecke eigensystem $\mf{m}$ should correspond to a Galois representation $\rho\co \Gal(\ol{\Q}/\Q) \rightarrow \wh{G}(\Cal{O})$. This is now known in many cases; for us the most important example (since it has $\delta>0$) is that of the Weil restriction of $\GL_n$ from a CM field\footnote{Admittedly, this doesn't satisfy our semisimplicity and splitness assumptions.}, which is established in \cite{HLTT} and \cite{Sch15}. We assume the existence of $\rho$, following \cite{GV}. 

We impose niceness assumptions on the residual representation $\ol{\rho}$, in particular that it has ``big image'' and is Fontaine-Laffaille at $p$, and enjoys a strong form of local-global compatibility; see \S \ref{ssec: galois side} for the details. Again, these conditions should conjecturally be true for all sufficiently large $p$. In the case of the Weil restriction of $\GL_n$ from a CM field, they are almost all known by \cite{ten}. 

An idea going back to Mazur is to study the formal deformation functor of $\ol{\rho}$, which is representable by a ``Galois deformation ring'' $\mrm{R}_S$ \cite{Maz89}. The Taylor-Wiles method, which is at the heart of all work on modularity, centers around the relationship between the Hecke algebra $(\TT_K)_{\mf{m}}$ and $\mrm{R}_S$. However, for general groups (e.g. whenever $\delta>0$) these rings are not ``big enough'' to run the Taylor-Wiles method. Calegari-Geraghty proposed a derived enhancement of the Taylor-Wiles method in order to overcome this difficulty \cite{CG18}.

In \cite{GV}, Galatius-Venkatesh re-interpreted the Calegari-Geraghty method in terms of a \emph{derived} Galois deformation ring $\Cal{R}_S$. This is a simplicial commutative ring, whose set of connected components recovers $\mrm{R}_S$. In general, given a simplicial commutative ring $\Cal{R}$ one can form its homotopy groups $\pi_*(\Cal{R})$, which have the structure of a graded algebra. Galatius-Venkatesh show, under the assumptions mentioned above, that $\pi_* (\Cal{R}_S)$ is an exterior algebra on a free $\Cal{O}$-module of rank $\delta$, and construct an action of $\pi_*(\Cal{R}_S)$ on $H_*(Y(K); \Cal{O})_{\mf{m}}$, which realizes the latter as a free module of rank one over $\pi_*(\Cal{R}_S)$, on any generator in degree $j_0$. This gives an ``explanation'' for the numerology \eqref{eq: multiplicity numerology}. 


Note that $\pi_* (\Cal{R}_S)$ is homologically graded. Letting $ \pi_* (\Cal{R}_S)^*$ denote its $\Cal{O}$-dual, under the running assumptions, \cite[Proposition 8.6]{V} and \cite[eqn. (15.4)]{GV} give a canonical isomorphism of $\Cal{O}$-modules, 
 \begin{equation}\label{eq: global langlands isom}
 \pi_* (\Cal{R}_S) \xrightarrow{\sim} \wt{\mbb{T}}_{\mf{m}}
 \end{equation}
 
\subsubsection{Summary of results} 
We say that a ``good'' prime $q$ is a \emph{Taylor-Wiles prime for $\rho$} if $q \equiv 1 \pmod{p}$, and the image of $\Frob_q$ under the residual representation $\ol{\rho}$ is strongly regular\footnote{This omits the Selmer condition that is sometimes also included in the condition of being a "Taylor-Wiles prime".}.  In this paper we define for each Taylor-Wiles prime $q$ a \emph{spectral Hecke algebra} $\Cal{S}_q^{\Hk}$, which is a simplicial commutative ring that serves as a spectral counterpart to the derived Hecke algebras $\Cal{H}_q$. (We could also define spectral Hecke algebras at non-Taylor-Wiles primes, but they are not relevant for our global applications, just as the derived Hecke algebras at non-Taylor-Wiles primes are not relevant in \cite{V}.) 


We construct a \emph{co-algebra} structure on $\Cal{S}_q^{\Hk}$. This co-algebra structure does not descend to homotopy groups. (An analogous phenomenon is familiar in homology theory, where coproducts on chains may not descend to coproducts on cohomology because ``the K\"{u}nneth theorem points the wrong way''.)  However, it does descend after tensoring with a ring $\Lambda$ in which $q \equiv 1$. For such $\Lambda$ we get a coproduct on $\pi_* (\Cal{S}_q^{\Hk} \dotimes_{\Cal{O}} \Lambda)$, and then an algebra structure on the dualized (over $\Lambda$) homotopy groups $\pi_* (\Cal{S}_q^{\Hk} \dotimes_{\Cal{O}} \Lambda)^*$, since these homotopy groups are free over $\Lambda$.

We construct an isomorphism between this graded algebra and the local \emph{derived Hecke algebra} (in the sense of \cite{V}) with coefficients in $\Lambda$, denoted $\Cal{H}_q(\Lambda)$: 
\begin{equation}\label{eq: local hecke isom}
\pi_* (\Cal{S}_q^{\Hk} \dotimes_{\Cal{O}} \Lambda )^* \xrightarrow{\sim}\Cal{H}_q(\Lambda) .
\end{equation}
This is an arithmetic analogue of (a Koszul dual form of) the \emph{derived Geometric Satake equivalence} conjectured by Drinfeld, and proved by Bezrukavnikov-Finkelberg \cite{BF08}. 

We also construct a natural \emph{co-action} of $\Cal{S}_q^{\Hk}$ on the derived Galois deformation ring $\Cal{R}_S$. Again, this descends to homotopy groups after tensoring with $\Lambda$, and this leads to an algebra action of the $\Lambda$-dualized homotopy groups $\pi_* (\Cal{S}_q^{\Hk}\dotimes_{\Cal{O}} \Lambda)^*$ on the $\Lambda$-dualized homotopy groups $\pi_* (\Cal{R}_S\dotimes_{\Cal{O}} \Lambda)^*$. 

Now fix $\Lambda$ a finite quotient of $\Cal{O}$. We show (Theorem \ref{thm: compatibilty}) that for all $q$ which are congruent to $1$ modulo a sufficiently  high power of $p$ (depending on $\Lambda$)\footnote{This restriction comes from a similar such assumption in the statement of Venkatesh's Reciprocity Law \cite[Theorem 8.5]{V}. We expect both statements to be true without it.}, this action is intertwined with the action of $\Cal{H}_q(\Lambda)$ on $\wt{\mbb{T}}_{\mf{m}} \otimes_{\Cal{O}} \Lambda$ (via the homomorphism from local to global Hecke algebra) via the identifications \eqref{eq: local hecke isom} and \eqref{eq: global langlands isom}.
\[
\begin{tikzcd}[row sep = tiny]
\pi_* (\Cal{S}_q^{\Hk}\dotimes_{\Cal{O}} \Lambda)^* \ar[r, "\sim", "\eqref{eq: local hecke isom}"'] &  \Cal{H}_q(\Lambda) \\
\acts & \acts \\
\pi_* (\Cal{R}_S\dotimes_{\Cal{O}} \Lambda)^* \ar[r, "\sim", "\eqref{eq: global langlands isom}"'] & \wt{\mbb{T}}_{\mf{m}} \otimes_{\Cal{O}} \Lambda
\end{tikzcd}
\] 
We call this property ``derived local-global compatibility''; it bears a striking analogy to the strong Hecke compatibility in the Geometric Langlands Conjecture \cite[\S 4.7.4]{Ga15}. 

\begin{remark} The usual local-global compatibility at unramified places is essentially equivalent to saying that actions of the ``underived (i.e. degree $0$) parts'' $\pi_0 (\Cal{S}_q^{\Hk}\dotimes_{\Cal{O}} \Lambda)^* \xrightarrow{\sim}  \Cal{H}_q(\Lambda)^0 $ are intertwined. Of course, we are assuming this to begin with, and our Theorem really amounts to the assertion that the action of the ``derived parts'' then also match. 
\end{remark}



\subsection{Guide to the paper}

In \S \ref{sec: geometric Langlands} we summarize relevant aspects of Geometric Langlands theory. This is mainly for motivational purposes, and is logically independent of the paper. The reader may certainly skip it, but for our part we find the analogy with Geometric Langlands quite enlightening, and it was a helpful guide for developing this paper. 

In \S \ref{sec: spectral Hecke algebra} we define the spectral Hecke algebra $\Cal{S}_q^{\Hk}$, and study some of its basic invariants: homotopy groups, cotangent complex, and Andr\'{e}-Quillen (co)homology. 

In \S \ref{sec: spectral action} we construct the co-algebra structure on $\Cal{S}_q^{\Hk}$, and the co-action on the derived deformation ring of \cite{GV}. It is somewhat curious that we arrive at co-algebras and co-actions; \S \ref{ssec: analogy} discusses some (very loose) philosophical reasons why this happens in terms of the analogy to Geometric Langlands. 

In \S \ref{sec: comparison} we compare $\Cal{S}_q^{\Hk}$ to the local derived Hecke algebra studied in \cite{V}. This allows us to formulate ``derived local-global compatibility'', whose statement and proof occupy \S \ref{sec: derived local-global}.

\subsection{Acknowledgements}
The ideas here were conceived jointly with Akshay Venkatesh, although he declined to be named an author. We thank Matt Emerton, Soren Galatius, Dennis Gaitsgory, Akhil Mathew, and Xinwen Zhu for conversations related to this work.


\section{The spectral Hecke stack in Geometric Langlands}\label{sec: geometric Langlands} 

In this section we briefly explain the role of the spectral Hecke stack in Geometric Langlands, summarizing parts of \cite[\S 4]{Ga15}, \cite[\S 12]{AG15}. This is purely for motivational purposes, and has no logical impact on any of the later sections, so we keep our discussion informal. 

\subsection{The Geometric Langlands Conjecture} Let $X$ be a smooth, connected, projective curve over $\CC$ and $G$ be a reductive group over $\CC$. Associated to $X$ we have $\Bun_G$, the moduli stack of $G$-bundles on $X$, and $\LocSys_{\wh{G}}$, the moduli stack of $\wh{G}$-local systems on $X$. 

The \emph{Geometric Langlands Conjecture}, as formulated in \cite[Conjecture 1.1.6]{AG15}, predicts an equivalence of categories:
\begin{equation}\label{eq: GLC}
\LL_G \co \IndCoh_{\Nilp}(\LocSys_{\wh{G}})  \xrightarrow{\sim} \Dmod(\Bun_G) .
\end{equation}
Furthermore, it demands that this equivalence satisfies certain compatibility properties. The one which is relevant to this paper is the categorical analogue of the requirement that ``Hecke eigenvalues = Frobenius eigenvalues'' in the classical Langlands correspondence. (Note that the conjecture \eqref{eq: GLC} corresponds to ``everywhere unramified'' representations, so this is the only form of local-global compatibility needed.)

\subsection{Automorphic Hecke stack} 

We first explain the Hecke stack on the automorphic side. Let $x\in X(\CC)$, $\Cal{O}_x$ be the completed local ring of $X$ at $x$, and $F_x$ be its fraction field. We denote by $D_x := \Spec \Cal{O}_x$ the ``disk around $x$'', and $D_x^* := \Spec F_x$ the ``punctured disk around $x$''. 

The \emph{local Hecke stack} (at $x$) parametrizes ``two $G$-bundles on $D_x$, together with an isomorphism of their restrictions to $D_x^*$''. Any $G$-bundle on $D_x$ is trivial, and after choosing trivializations such an isomorphism is given by an element of $G(F_x)$. Hence it admits the presentation
\[
\Hecke(G, \aut)^{\loc}_x := \scr{L}^+G  \bs  \scr{L} G  / \scr{L}^+G,
\]
where $\scr{L}^+G$ is the \emph{arc space} of $G$ (a pro-algebraic group over $\CC$ whose $\CC$-points are $G(\Cal{O}_x)$), and $\scr{L}G$ is the loop group of $G$ (a group ind-scheme whose $\CC$-points are $G(F_x)$). The quotient is understood as a prestack, but what really matters is its category of sheaves, which can be understood more classically in terms of the presentation $\Dmod(\Hecke(G, \aut)_x^{\loc}) = \Dmod_{G(\Cal{O})}( \Gr_G)$. 


We denote a point of $\Hecke(G, \aut)^{\loc}(S)$ by $(\Cal{E} \dashrightarrow \Cal{E}')$, where $\Cal{E}$ and $\Cal{E}'$ are $G$-bundles on ``the disk around $x$'' (in the sense of $S$-points). We have a diagram 
\[
\begin{tikzcd}
& \Hecke(G, \aut)^{\loc}_x \ar[dl, "h^{\leftarrow}"'] \ar[dr, "h^\rightarrow"] \\
\Bun_{G, D_x}  & & \Bun_{G, D_x}
\end{tikzcd}
\]
where $h^{\leftarrow}(\Cal{E}, \Cal{E}', \Cal{E}|_{D_x^*}  \dashrightarrow \Cal{E}'|_{D_x^*}) = \Cal{E}$ and $h^{\rightarrow}(\Cal{E}, \Cal{E}', \Cal{E}|_{D_x^*}  \dashrightarrow \Cal{E}'|_{D_x^*}) = \Cal{E}'$. 

Restriction of bundles induces a map $\Bun_G \rightarrow \Bun_{G,D_x}$ for any $x$, and by the Beauville-Laszlo(-Drinfeld-Simpson) Theorem \cite{DS95} both squares in the commutative diagram below are cartesian. 
\[
\begin{tikzcd}
& \Hecke(G,\aut)_x^{\glob} \ar[dd] \ar[dl, "h^{\leftarrow}"']   \ar[dr, "h^\rightarrow"] \\
\Bun_G \ar[dd] & & \Bun_G \ar[dd] \\
& \Hecke(G,\aut)_x^{\loc} \ar[dl, "h^{\leftarrow}"'] \ar[dr, "h^\rightarrow"] \\
\Bun_{G, D_x}  & & \Bun_{G, D_x}
\end{tikzcd}
\]
This induces an action of $\Dmod(\Hecke(G,\aut)^{\glob}_x)$ on $\Dmod(\Bun_{G})$, by convolution: $\Cal{K} \in \Dmod(\Hecke(G,\aut)^{\glob}_x )$ acts on $\Cal{F} \in \Dmod(\Bun_{G})$ as 
\[
\Cal{F} \mapsto  h^{\leftarrow}_*( \Cal{K} \stackrel{!}\otimes (h^{\rightarrow})^! \Cal{F} ). 
\]
Composing this action with the pullback $\Dmod(\Hecke(G,\aut)_x^{\loc}) \rightarrow \Dmod(\Hecke(G,\aut)_x^{\glob} )$ induces an action of $\Dmod(\Hecke(G,\aut)_x^{\loc})$ on $D(\Bun_G)$, which is the analogue the action of classical Hecke operators at a place $x$ on the space of automorphic functions.

\begin{remark}\label{rem: hecke ran}
We can assemble the $\Hecke(\wh{G},\aut)_x^{\mrm{loc}}$-action, for varying $x$, into an action of $\Hecke(\wh{G}, \aut)^{\mrm{loc}}_{\Ran(X)}$ where the \emph{Ran space} $\Ran(X)$ parametrizes finite subsets of $X$ (see \cite[\S 4]{Ga15} for a concise discussion of this formalism). This is the analogue of assembling the local spherical Hecke algebras $H(G(\Z_p) \bs G(\Q_p) / G(\Z_p))$, as $p$ varies, into the adelic Hecke algebra. 
\end{remark}

\subsection{Spectral Hecke stack}\label{ssec: GL spectral Hecke stack}
We now formulate the analogue of $ \Hecke(G, \aut)_x^{\mrm{loc}}$, and its action, on the spectral side. Informally, this should parametrize ``two $\wh{G}$-local systems on a $D_x$, together with an isomorphism of their restrictions $D_x^*$'', meaning the fibered product of the diagram 
\[
\begin{tikzcd}
&  \LocSys_{\wh{G}, D_x} 
 \ar[d]  \\
 \LocSys_{\wh{G}, D_x} \ar[r] &   \LocSys_{\wh{G}, D_x^*}
 \end{tikzcd}
\]
where $\LocSys_{\wh{G}, D_x} $ is the space of $\wh{G}$-local systems on $D_x$, and $\LocSys_{\wh{G}, D_x^*}$ is the space of $\wh{G}$-local systems on $D_x^*$. Let's unwind what these objects are explicitly. 
\begin{itemize}
\item A $\wh{G}$-local system on $D_x$ is equivalent to the datum of a $\wh{G}$-torsor on $x$, which is necessarily trivial with automorphism group $\wh{G}$. Hence the space of such is $\LocSys_{\wh{G}, D_x}  = B\wh{G} := [\msf{pt}/\wh{G}]$. 

\item The formal neighborhood of the trivial local system in $\LocSys_{\wh{G}, D_x^*}$ is $\mf{g}/(\wh{G}, \Ad)$. This is easy to see for Betti local systems (representations of $\pi_1$), although our discussion has really been for de Rham local systems (vector bundles with connection). In the Betti case, a $\wh{G}$-local system on $D_x^*$ is specified by the monodromy, which is an element of $\wh{G}$ up to conjugation, and the formal neighborhood of the identity is isomorphic to $\mf{g}/(\wh{G}, \Ad)$ by the logarithm; the formal neighborhood of the trivial local system happens to be the same in the de Rham case.
\end{itemize}

Since a $\wh{G}$-local system on $D_x^*$ coming by restriction from one on $D_x$ is necessarily trivial, the map $\LocSys_{\wh{G}, D_x} \rightarrow \LocSys_{\wh{G}, D_x^*} $ sends $\msf{pt}$ to $0 \in \wh{\mf{g}}$. Clearly this fibered product is only interesting if we form it in a derived way. We define the \emph{local spectral Hecke stack} $\Hecke(\wh{G}, \mrm{spec})^{\mrm{loc}}_x $ to be the \emph{derived} fibered product
\[
\begin{tikzcd}
\Hecke(\wh{G}, \mrm{spec})^{\mrm{loc}}_x \ar[r, "h^{\rightarrow}"] \ar[d, "h^{\leftarrow}"] &  B\wh{G} \ar[d] \\
B\wh{G} \ar[r] &  \wh{\mf{g}}/(\wh{G}, \Ad)
\end{tikzcd}
\]

\subsubsection{Categories of sheaves} We are interested in certain categories of sheaves on $\Hecke(\wh{G}, \mrm{spec})^{\mrm{loc}}_x$. As was pointed out in \cite{AG15}, the singularities of $\LocSys_{\wh{G}}$ create some delicate issues in defining suitable categories of sheaves. The ``correct'' category to work with is $\IndCoh_{\Nilp}(\Hecke(\wh{G}, \mrm{spec})^{\mrm{loc}}_x)$, which contains $\QCoh(\Hecke(\wh{G}, \mrm{spec})^{\mrm{loc}}_x)$ as the full subcategory consisting of sheaves with 0 singular support. The nilpotent singular support has some connection with Arthur parameters, and it would be interesting to precisely understand the arithmetic analogue of this distinction. However we will eventually restrict our attention to tempered automorphic representations, and conjecturally the difference between these categories is invisible when acting on the ``tempered parts'' of \eqref{eq: GLC}, so we don't expect this subtlety to be meaningful for the purposes of this paper. 

\subsubsection{Monoidal structure} In general, a space of the form $\Cal{X} \times_{\Cal{V}} \Cal{X}$ has the structure of a groupoid over $\Cal{X}$, with the composition map 
\[
(\Cal{X} \times_{\Cal{V}} \Cal{X} ) \times_{\Cal{X}} (\Cal{X} \times_{\Cal{V}} \Cal{X})
\]
given by ``$(x_1, x_2), (x_2, x_3) \mapsto (x_1,x_3)$'' (cf. \S \ref{sec: spectral action} for more explanation). Applied to $\Hecke(\wh{G}, \mrm{spec})^{\mrm{loc}} $, we get a monoidal structure on $\IndCoh(\Hecke(\wh{G}, \mrm{spec})^{\mrm{loc}}_x )$, where we use $!$-pullback and $*$-pushforward (which preserves $\IndCoh_{\Nilp}$ and $\QCoh$). With this structure, the functor 
\[
\Rep(\wh{G}) = \QCoh(B\wh{G}) \rightarrow \IndCoh(\Hecke(\wh{G}, \mrm{spec})_x^{\mrm{loc}} ),
\]
given by pushforward across the diagonal map $\pt/\wh{G} \rightarrow \Hecke(\wh{G}, \mrm{spec})^{\mrm{loc}}_x$, is monoidal (with respect to the usual tensor product on $\Rep(\wh{G})$).

\subsection{Spectral Hecke action on local systems} \label{ssec: GL spectral action}

There is a map $\LocSys_{\wh{G}} \rightarrow \LocSys_{\wh{G}, D_x}$ given by restriction of local systems, and by \cite[eqn. (10.13)]{AG15} we have a presentation of $\LocSys_{\wh{G}}$ as the derived fibered product
\[
\begin{tikzcd}
\LocSys_{\wh{G}} \ar[r] \ar[d] & \LocSys_{\wh{G}}^{\mrm{R.S.},x} \ar[d] \\
B \wh{G} \ar[r] & \wh{\mf{g}}/\wh{G}
\end{tikzcd}
\]
where $ \LocSys_{\wh{G}}^{\mrm{R.S.},x} $ is the moduli stack of ``local systems with (at most) a simple pole at $x$''. (The arithmetic analogue of this cartesian square appears in \eqref{eq: adding ramification 1}.)

As explained in \cite[eqn. (12.11)]{AG15}, this induces a commutative diagram with all squares cartesian
\[
\begin{tikzcd}
&  \Hecke(\wh{G}, \mrm{spec})_x^{\mrm{glob}} \ar[dl, "h^{\leftarrow}"'] \ar[dr, "h^\rightarrow"] \ar[dd] \\ 
\LocSys_{\wh{G}} \ar[dd]  & & \LocSys_{\wh{G}} \ar[dd]  \\
& \Hecke(\wh{G}, \mrm{spec})_x^{\mrm{loc}} \ar[dl, "h^{\leftarrow}"'] \ar[dr, "h^\rightarrow"] \\
\LocSys_{\wh{G}, D_x} & & \LocSys_{\wh{G}, D_x}
\end{tikzcd}
\]
Hence one has an action of $\IndCoh_{\Nilp}(\Hecke(\wh{G},{\mrm{spec}}) )_x^{\mrm{glob}}$ on $\IndCoh_{\Nilp}(\LocSys_{\wh{G}})$ by convolution: $\Cal{K} \in \IndCoh_{\Nilp}(\Hecke(\wh{G}, \mrm{spec})_x^{\glob})$ acts on $\Cal{F} \in \IndCoh_{\Nilp}(\LocSys_{\wh{G}} )$ as 
\[
\Cal{F} \mapsto  h^{\leftarrow}_*( \Cal{K} \stackrel{!}\otimes (h^{\rightarrow})^! \Cal{F} ). 
\]
This induces an action of $\IndCoh_{\Nilp}(\Hecke(\wh{G},{\mrm{spec}}) ^{\mrm{loc}}_x)$ by composing with the pullback 
\[
\IndCoh_{\Nilp}(\Hecke(\wh{G},{\mrm{spec}}) ^{\mrm{loc}}_x) \rightarrow \IndCoh_{\Nilp}(\Hecke(\wh{G},{\mrm{spec}}) ^{\mrm{glob}}_x).
\]

\begin{remark} Parallel to Remark \ref{rem: hecke ran}, we can assemble the action of $\Hecke(\wh{G}, \mrm{spec})_x^{\mrm{loc}}$ into an action of $\Hecke(\wh{G}, \mrm{spec})^{\mrm{loc}}_{\Ran(X)}$ on $\IndCoh_{\Nilp}(\LocSys_{\wh{G}})$. 
\end{remark}

\subsection{Local-global compatibility}\label{ssec: GL local-global}

The \emph{derived Geometric Satake equivalence} of Ginzburg and Bezrukavnikov-Finkelberg \cite{BF08} induces by Koszul duality a monoidal equivalence \cite[\S 12.1.1]{AG15}
\[
\Sat \co \IndCoh_{\Nilp}(\Hecke(\wh{G},\mrm{spec})_x^{\loc}) \xrightarrow{\sim} \Dmod(\Hecke(G, \aut)_x^{\loc}).
\]

The ``Hecke compatibility'' aspect of the Geometric Langlands Conjecture demands that the equivalence $\LL_G$ from \eqref{eq: GLC} intertwines the automorphic and spectral Hecke actions through the Satake functor \cite[Conjecture 12.7.6]{AG15}:
\[
\begin{tikzcd}[row sep = tiny]
\IndCoh_{\Nilp}(\Hecke(\wh{G}, \mrm{spec})^{\loc}_{\Ran(X)} )  \ar[r, "\Sat", "\sim"'] & \Dmod(\Hecke(G, \aut)_{\Ran(X)}^{\loc}) \\
\acts & \acts \\
\IndCoh_{\Nilp}(\LocSys_{\wh{G}})   \ar[r, "\LL_G", "\sim"'] &  \Dmod(\Bun_G) 
\end{tikzcd}
\]

\section{The spectral Hecke algebra in arithmetic}\label{sec: spectral Hecke algebra}

We now introduce an arithmetic analogue of the spectral Hecke stack. 

\subsection{Motivation}

The arithmetic version of $D_x$ should be $\Spec \Z_q$ and the arithmetic version of $D_x^*$ should be $\Spec \Q_q$. So in the arithmetic case, we roughly propose to replace
\begin{align*}
\LocSys_{\wh{G}, D_x} & \rightsquigarrow \LocSys_{\wh{G}, \Z_q}, \\
\LocSys_{\wh{G}, D_x^*} & \rightsquigarrow \LocSys_{\wh{G}, \Q_q}.
\end{align*} 
Here $\LocSys_{\wh{G}, \Z_q}$ should be a moduli space of representation of $\pi_1(\Spec \Z_q)$, and $\LocSys_{\wh{G}, \Q_q}$ should be a moduli space of representations of $\pi_1(\Spec \Q_q) \cong \Gal(\ol{\Q}_q/\Q_q)$. 

We would then be interested in the derived fibered product
\[
\Hecke(\wh{G}, \mrm{spec})_{q} :=\LocSys_{\wh{G}, \Z_q} \times_{\LocSys_{\wh{G}, \Q_q}} \LocSys_{\wh{G}, \Z_q}.
\]
This is roughly the object that we will study, but some technical issues need to be addressed. One is the definition of the spaces ``$\LocSys_{\wh{G}, \Q_q}$'' and ``$\LocSys_{\wh{G}, \Z_q}$'', for which candidates are constructed \cite[\S 3]{Zhu20}, which lead to a good candidate for a spectral Hecke algebra, as in \cite[Conjecture 4.2.1]{Zhu20}.

Our approach will be different. Firstly, the functors $\LocSys_{\wh{G}, \Z_q} $ and $\LocSys_{\wh{G}, \Q_q}$ are not representable in general, so we need to introduce framings in order to  work with rings. For our present applications to studying the action on deformation spaces of global Galois representations, we need to complete at a given residual representation. Hence for our present purposes we work instead with formal deformation rings. 


\subsection{Definition of the spectral Hecke algebra} 

\subsubsection{Some notation}
Following the notation in \cite[\S 7.4]{GV}, let $q$ be a prime (the notation reflects that it will eventually be a ``Taylor-Wiles prime''). 

Let $k$ be a finite field of characteristic $p \neq q$ and $\Cal{O} = W(k)$. Let $\wh{G}$ be an algebraic group over $\Cal{O}$ and $\ol{\rho}$ be a representation of $\pi_1(\Z_q)$ into $\wh{G}(k)$, which we view by inflation as an unramified representation of $\Gal(\ol{\Q}_q/\Q_q)$.

  We let $\Cal{F}_{\Z_q, \ol{\rho}}$ be the (derived) deformation functor of $\ol{\rho}$, i.e. the functor parametrized unramified $G_{\Q_q}$-deformations of $\ol{\rho}$, from \cite[\S 7.4]{GV}. (See \S \ref{ssec: global derived def ring} for a brief discussion of how to define this.) 
 We let $\Cal{F}_{\Q_q, \ol{\rho}}$ denote the deformation functor of $\ol{\rho}$ as a $G_{\Q_q}$-deformation (here, the deformations are allowed to become ramified). 
 These are functors from simplicial commutative rings to simplicial sets; they are certainly not representable in general. 


\subsubsection{Taylor-Wiles primes} We now assume that $q$ is a \emph{Taylor-Wiles prime} for $\ol{\rho}$ in the sense of \cite[\S 6.7]{GV}, i.e. 
\begin{itemize}
\item $\ol{\rho}$ is unramified at $q$, 
\item $q \equiv 1 \in k$, 
\item $\ol{\rho}(\Frob_q)$ is conjugate to a strongly regular element of $\wh{T}(k)$.
\end{itemize}
(We do \emph{not} impose the Selmer condition that is often associated with the phrase ``Taylor-Wiles prime''.) This implies that $\rho$ admits a lift 
\[
\begin{tikzcd}
\pi_1 (\Z_q) \ar[r, "\ol{\rho}^{\wh{T}}"] \ar[dr, "\ol{\rho}"'] & \wh{T}(k) \ar[d] \\
& \wh{G}(k) 
\end{tikzcd}
\]
which is determined by $\Frob_q^{\wh{T}} := \ol{\rho}(\Frob_q)^{\wh{T}} \in \wh{T}(k)$. Abusing notation, \emph{we regard this choice of lift as part of the datum of a Taylor-Wiles prime}. (Later, the comparison to the derived Hecke algebra shows that the action is independent of this choice in the only reasonable sense.)

\subsubsection{Framed deformation rings} Let $q$ be a Taylor-Wiles prime for $\ol{\rho}$; henceforth we suppress $\ol{\rho}$ from the notation. Following the notation of \cite[\S 7.4]{GV}, let $\Cal{F}_{\Z_q, \ol{\rho}}^{\wh{T}, \square}$ and $\Cal{F}_{\Q_q,\ol{\rho}}^{\wh{T}, \square}$, denote the unramified and full framed deformation functors into $\wh{T}$, respectively. (This depend on the choice of lift $\ol{\rho}(\Frob_q) \in \wh{T}(k)$, which is suppressed in our notation.) These are pro-representable by pro-rings $\Cal{S}_q^{\mrm{ur}}$ and $\Cal{S}_q$, respectively. One can think of these as being the usual (non-derived) framed deformation rings, as follows. 

Recall that we say a pro-ring $\Cal{R}$ is \emph{homotopy discrete} if $\Cal{R} \rightarrow \pi_0 (\Cal{R})$ induces a weak equivalence of the induced pro-represented functors \cite[Definition 7.4]{GV}. By \cite[Lemma 8.6]{GV}, the rings $\Cal{S}_q^{\mrm{ur}}$ and $\Cal{S}_q$ are homotopy discrete. For our purposes, this means that one can simply regard them as discrete (i.e. non-simplicial) pro-rings, and by forming inverse limits as complete local Noetherian rings \cite[Lemma 7.2]{GV}. These complete local Noetherian rings then pro-represent the usual classical framed deformation functors.

\begin{defn}
The \emph{spectral Hecke algebra (at $q$, completed at $\ol{\rho}$)} is 
\[
\Cal{S}_q^{\Hk} :=\Cal{S}_q^{\mrm{ur}} \ul{\otimes}_{\Cal{S}_q} \Cal{S}_q^{\mrm{ur}}
\]
where the tensor product is the ``derived tensor product'', regarded as a simplicial commutative ring (meaning the tensor product of $\Cal{S}_q^{\mrm{ur}}$ with a cofibrant replacement of $\Cal{S}_q^{\mrm{ur}}$ as a $\Cal{S}_q$-algebra). 

The corresponding functor pro-represented by $\Cal{S}_q^{\Hk} $ will be denoted $\Hecke(\wh{G}, \spec)_q^{\loc}$. (The somewhat ``ad hoc'' use of framings in this definition is eventually justified by \S \ref{ssec: action on deformation ring}).)
\end{defn}

\begin{remark}\label{remark: pedantry}
In the usual category of commutative rings, constructions such as tensor products are unique up to unique isomorphism. This will never be the case for constructions we consider in the category of simplicial commutative rings; instead we get constructions that are, informally speaking, ``unique up to a contractible space of isomorphisms''. One way to express this is to say ``unique up to unique isomorphism in the homotopy category'', but this is not very good. In \cite{GV}, authors choose to work with the notion of ``naturally weakly equivalent'', which means that the two functors are related by a finite ``zig-zag'' of natural weak equivalences \cite[Definition 2.10]{GV}. The language of $\infty$-categories could probably provide a cleaner solution. 

These expository issues do not affect any calculation at the level of homotopy groups, (co)tangent complexes, Andr\'{e}-Quillen (co)homology, etc. Our ``official'' policy is to follow the convention of \cite{GV}. For two simplicial commutative rings $R,S$ we write $R \approx S$ or $R \xrightarrow{\sim} S$ to indicate a weak equivalence between $R$ and $S$ in the usual model structure on simplicial commutative rings. 
\end{remark}

\begin{remark}
One can make a more general definition of a spectral Hecke algebra at primes $q \neq p$ which are not of Taylor-Wiles type, by simply considering the framed deformation functor for $G$. Among the primes $q$ different from $p$, we expect the resulting object to be most interesting when $q$ is Taylor-Wiles, analogously to what happens for the local derived Hecke algebra in \cite{V}. However, when $q=p$ there should be a much richer story, and we have little idea what to expect. The analogous derived Hecke algebra has been investigated by Ronchetti \cite{Ron}. 
\end{remark}

\subsection{Explication in the Taylor-Wiles case}

Let $\mathrm{S}_q = \pi_0(\Cal{S}_q)$ and $\mrm{S}_q^{\ur} =\pi_0 (\Cal{S}_q^{\ur})$. It is also convenient to introduce the notation $\mrm{S}_q^\circ$ be the (underived) framed deformation ring for the trivial representation $I_q \rightarrow T$, where $I_q \approx (\Z/q)^{\times}$ is the tame inertial subgroup of $\Gal(\ol{\Q}_q/\Q_q)^{\ab}$.  

For a finitely generated abelian group $\Gamma$, let $\Gamma_{(p)}$ denote the quotient of $\Gamma$ by all of its prime-to-$p$ torsion. Following the notation of \cite[Remark 8.7]{GV}, we write $T(\Q_q)^{\ur} := T(\Q_q)/T(\Z_q)$ and $T(\Q_q)^{\mrm{tame}}$ for the profinite completion of $T(\Q_q)/\ker(T(\Z_q) \rightarrow T(\F_q))$. The usual computation of the deformation space at a Taylor-Wiles prime \cite[Remark 8.7]{GV} shows that 
\begin{align*}
\mathrm{S}_q^{\ur} &= \text{completed group algebra of } T(\Q_q)^{\ur}_{(p)}, \\
\mathrm{S}_q^\circ &= \text{completed group algebra of } T(\F_q)_{(p)},\\
\mathrm{S}_q &= \text{completed group algebra of } T(\Q_q)^{\mrm{tame}}_{(p)}.
\end{align*}

We can write this explicitly in coordinates if we choose an isomorphism $T \approx \G_m^r$. Let $p^N$ be the highest power of $p$ dividing $q-1$, so 
\begin{align*}
T(\Q_q)^{\ur}_{(p)} &\approx \wh{\Z}^r, \\
T(\F_q)_{(p)} & \approx (\Z/p^N \Z )^r,\\
T(\Q_q)^{\mrm{tame}}_{(p)} & \approx \wh{\Z}^r \times (\Z/p^N \Z)^r.
\end{align*}
Then we have
\begin{align*}
\mathrm{S}_q^{\ur}&  \approx \Cal{O}[[X_1, \ldots, X_r]] ,\\
\mathrm{S}_q^\circ & \approx  \Cal{O}[[Y_1, \ldots, Y_r]]\langle (1+Y_i)^{p^N}-1 \rangle \xleftarrow{\sim} \Cal{O}[Y_1, \ldots, Y_r]\langle (1+Y_i)^{p^N}-1 \rangle , \\
\mathrm{S}_q & \approx \Cal{O}[[X_1, \ldots, X_r]][Y_1, \ldots, Y_r]/\langle (1+Y_i)^{p^N}-1 \rangle.
\end{align*}

Since $\Cal{S}_q$ and $\Cal{S}_q^{\ur}$ are already homotopy discrete, we can calculate ``the'' derived tensor product using $\mathrm{S}_q^{\ur}$ and $\mathrm{S}_q $: 
\[
\Cal{S}_q^{\ur} \dotimes_{\Cal{S}_q } \Cal{S}_q^{\ur} \xrightarrow{\sim} \mrm{S}_q^{\ur} \dotimes_{\mrm{S}_q} \mrm{S}_q^{\ur} \cong \mrm{S}_q^{\ur} \dotimes_{\Cal{O}} (\Cal{O} \dotimes_{\Cal{O}[T(\F_q)_{(p)}]} \Cal{O}),
\]
where the last isomorphism follows from the fact that $\mrm{S}_q^{\ur} $ is already free over $\Cal{O}$. Hence we find 
\begin{equation}\label{eq: SHA = group homology}
\Cal{S}_q^{\Hk} \approx \mrm{S}_q^{\ur} \otimes_{\Cal{O}} (\Cal{O} \dotimes_{\Cal{O}[T(\F_q)_{(p)}]} \Cal{O}).
\end{equation}

Denote $T_q := T(\F_q)_{(p)}$. The underlying simplicial $\Lambda$-module of $\Lambda \dotimes_{\Lambda[T_q]} \Lambda$ is exactly what is used to compute to compute the group homology of $T_q$: 
\[
\Tor^{\Lambda[T_q]}_* (\Lambda, \Lambda) = H_*(T_q; \Lambda). 
\]
Hence \eqref{eq: SHA = group homology} implies:

\begin{cor}\label{cor: homotopy groups of SHA} We have
\[
\pi_* (\mrm{S}_q^{\Hk} \dotimes_{\Cal{O}} \Lambda) \cong \mrm{S}_q^{\ur} \otimes_{\Cal{O}} H_*(T_q; \Lambda).
\]
\end{cor}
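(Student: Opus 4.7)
The statement is an essentially formal consequence of equation \eqref{eq: SHA = group homology}, which already identifies
\[
\Cal{S}_q^{\Hk} \;\approx\; \mrm{S}_q^{\ur} \otimes_{\Cal{O}} \bigl(\Cal{O} \dotimes_{\Cal{O}[T_q]} \Cal{O}\bigr).
\]
The plan is to apply $-\dotimes_{\Cal{O}} \Lambda$ to this expression, rearrange via the symmetric monoidal structure on simplicial commutative $\Cal{O}$-algebras, and then recognize the remaining simplicial module as a standard complex computing $H_*(T_q; \Lambda)$.

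First, since $\mrm{S}_q^{\ur} \cong \Cal{O}[[X_1, \ldots, X_r]]$ is flat over $\Cal{O}$, the tensor factor $\mrm{S}_q^{\ur} \otimes_{\Cal{O}} -$ in \eqref{eq: SHA = group homology} already agrees with its derived version, and rearranging gives
\[
\Cal{S}_q^{\Hk} \dotimes_{\Cal{O}} \Lambda \;\approx\; \mrm{S}_q^{\ur} \otimes_{\Cal{O}} \bigl(\Cal{O} \dotimes_{\Cal{O}[T_q]} \Cal{O} \dotimes_{\Cal{O}} \Lambda\bigr).
\]
Second, because $T_q$ is finite, $\Cal{O}[T_q]$ is free as an $\Cal{O}$-module, so the underived base change $\Cal{O}[T_q] \otimes_{\Cal{O}} \Lambda = \Lambda[T_q]$ is already derived; hence
\[
\Cal{O} \dotimes_{\Cal{O}[T_q]} \Cal{O} \dotimes_{\Cal{O}} \Lambda \;\approx\; \Lambda \dotimes_{\Lambda[T_q]} \Lambda.
\]
Finally, applying $\pi_*$ and again using flatness of $\mrm{S}_q^{\ur}$ over $\Cal{O}$ to pull it outside, one invokes the tautological identification $\pi_*(\Lambda \dotimes_{\Lambda[T_q]} \Lambda) = \Tor_*^{\Lambda[T_q]}(\Lambda, \Lambda) = H_*(T_q; \Lambda)$.

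There is no real obstacle: the computation is purely manipulative and the only subtlety is tracking the model-categorical/$\infty$-categorical meaning of $\dotimes$ in the sense of Remark~\ref{remark: pedantry}, which is invisible at the level of $\pi_*$. The substantive content has already been done in deriving \eqref{eq: SHA = group homology}, namely the reduction of $\Cal{S}_q^{\ur} \dotimes_{\Cal{S}_q} \Cal{S}_q^{\ur}$ to a group-homology calculation for $T_q$ via the explicit Taylor--Wiles computation of the deformation rings.
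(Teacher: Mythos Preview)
Your proposal is correct and follows essentially the same approach as the paper: the paper simply observes that \eqref{eq: SHA = group homology} together with the identification $\pi_*(\Lambda \dotimes_{\Lambda[T_q]} \Lambda) = \Tor^{\Lambda[T_q]}_*(\Lambda,\Lambda) = H_*(T_q;\Lambda)$ yields the corollary. You have spelled out the flatness and base-change justifications more carefully than the paper does, but the route is the same.
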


\subsection{The tangent complex}\label{ssec: tangent complex of Fib}

Let $\Lambda$ a coefficient ring of the form $\Cal{O}/p^m$ for some $m \geq 1$. Suppose we are given an unramified deformation 
\[
\rho_{\Lambda} \co \pi_1(\Z_q) \rightarrow \wh{G}(\Lambda).
\]
We may then consider the deformation functors $\Cal{F}_{\Z_q, \rho_{\Lambda}}$ and $\Cal{F}_{\Q_q, \rho_{\Lambda}}$ of $\rho_{\Lambda}$ on $\Lambda$-augmented Artinian rings. 

For any functor $\Cal{F}$ on $\Lambda$-augmented $\Lambda$-rings, equipped with a given $0$-simplex of $\Cal{F}(\Lambda)$, we may consider the \emph{tangent complex} $\mf{t} \Cal{F}$ in the sense of \cite[Proof of Lemma 15.1]{GV}. This has homotopy groups $\mf{t}_i(\Cal{F}) := \pi_{-i}(\mf{t} \Cal{F} ) $ being the homotopy classes of maps $\Cal{F}(\Lambda \oplus \Lambda[i])$ lying over the given $0$-simplex of $\Cal{F}(\Lambda)$. 

\begin{remark}
Note that by the strong regularity assumption on $\rho(\Frob_q)$, our initial choice of lift $\ol{\rho}^{\wh{T}}(\Frob_q) \in \wh{T}(k)$ induces a lifting 
\[
\begin{tikzcd}
\pi_1 (\Z_q) \ar[r, "\rho_{\Lambda}^{\wh{T}}"] \ar[dr, "\rho_{\Lambda}"'] & \wh{T}(\Lambda) \ar[d] \\
& \wh{G}(\Lambda) \\
\end{tikzcd}
\]
That is, we automatically get a lift $\rho_{\Lambda}^{\wh{T}} (\Frob_q) \in \wh{T}(\Lambda)$, without making any additional auxiliary choices.
\end{remark}



\subsubsection{A fibration sequence}\label{sssec: fibration sequence} Our fixed representation $\rho_{\Lambda}$ gives a basepoint 
\[
\Spec \Lambda \xrightarrow{\msf{pt}} \Cal{F}_{\Z_q, \rho_{\Lambda}} \rightarrow \Cal{F}_{\Q_q, \rho_{\Lambda}}.
\]
Let $\Fib_{q,\Lambda}$ denote the homotopy fiber of the map $\Cal{F}_{\Z_q, \rho_{\Lambda}} \rightarrow \Cal{F}_{\Q_q, \rho_{\Lambda}}$ over $\msf{pt}$:
\[
\Fib_{q,\Lambda} = \Spec \Lambda \times^h_{\Cal{F}_{\Q_q}, \rho_{\Lambda}} \Cal{F}_{\Z_q, \rho_{\Lambda}}.
\]

\begin{remark}
The lift $\rho_{\Lambda}^{\wh{T}}(\Frob_q) \in \wh{T}(\Lambda)$ induces, as in \cite[eqn. (8.2)]{GV}, a cartesian diagram with compatible basepoints: 
\[
\begin{tikzcd}
\Cal{F}_{\Z_q, \rho_{\Lambda}} \ar[r] \ar[d] &  \Cal{F}_{\Z_q, \rho_{\Lambda}}^{\wh{T}, \square} \ar[d] \\
\Cal{F}_{\Q_q, \rho_{\Lambda}} \ar[r] & \Cal{F}_{\Q_q, \rho_{\Lambda}}^{\wh{T}, \square}
\end{tikzcd}
\]
giving a natural weak equivalence
\begin{equation}\label{eq: w.e. for Fib_q} 
\Fib_{q,\Lambda} \xrightarrow{\sim} \Spec \Lambda \stackrel{h}\times_{\Cal{F}_{\Q_q,\rho_{\Lambda}}^{\wh{T}, \square}} \Cal{F}_{\Z_q, \rho_{\Lambda}}^{\wh{T}, \square}.
\end{equation}
\end{remark}

\subsubsection{} Then tangent complex preserves homotopy pullbacks \cite[Lemma 4.30(iv)]{GV}, giving us the long exact sequence of Andr\'{e}-Quillen cohomology with coefficients in $\Lambda$:
\begin{align*}
\ldots & \rightarrow \mf{t}_0( \Fib_{q,\Lambda}) \rightarrow \mf{t}_0 (\Cal{F}_{\Z_q, \rho_{\Lambda}} )\rightarrow \mf{t}_0(\Cal{F}_{\Q_q,\rho_{\Lambda}} )\\
&\rightarrow \mf{t}_1( \Fib_{q,\Lambda} )  \rightarrow \mf{t}_1  (\Cal{F}_{\Z_q,\rho_{\Lambda}} )  \rightarrow \mf{t}_1 (\Cal{F}_{\Q_q,\rho_{\Lambda}} )  \\
& \rightarrow  \mf{t}_2 ( \Fib_{q,\Lambda}) \rightarrow \mf{t}_2(  \Cal{F}_{\Z_q,\rho_{\Lambda}})  \rightarrow \mf{t}_2 (\Cal{F}_{\Q_q,\rho_{\Lambda}}) \rightarrow \ldots 
\end{align*}
 As in \cite[Example 5.6]{GV} and \cite[Lemma 15.1]{GV}, we have 
 \begin{align*}
\mf{t}_i (\Cal{F}_{\Z_q,\rho_{\Lambda}} )& = H^{i+1}(\Z_q; \Ad \rho_{\Lambda}),  \\
\mf{t}_i (\Cal{F}_{\Q_q,\rho_{\Lambda}} )&= H^{i+1}(\Q_q; \Ad \rho_{\Lambda}) .
\end{align*}
Splicing this in above, we get 
\begin{align*}
\ldots & \rightarrow \mf{t}_0 (\Fib_{q,\Lambda}) \rightarrow  H^{1}(\Z_q; \Ad \rho_{\Lambda})  \rightarrow H^{1}(\Q_q; \Ad \rho_{\Lambda}) \\
&\rightarrow \mf{t}_1(\Fib_{q,\Lambda})  \rightarrow \underbrace{H^2(\Z_q; \Ad \rho_{\Lambda})}_{=0}  \rightarrow H^2(\Q_q; \Ad \rho_{\Lambda})   \\
& \rightarrow \mf{t}_2(\Fib_{q,\Lambda} ) \rightarrow 0  \rightarrow 0  \rightarrow \ldots 
\end{align*}

\subsubsection{Calculation of $\mf{t}_0(\Fib_{q,\Lambda})$} Since $H^{1}(\Z_q; \Ad \rho_{\Lambda})  \hookrightarrow H^{1}(\Q_q; \Ad \rho_{\Lambda})$, we find that $\mf{t}_0( \Fib_{q,\Lambda})= 0$. 

\subsubsection{Calculation of $\mf{t}_1(\Fib_{q,\Lambda})$}\label{sssec: t_1(Fib)} The long exact sequence gives an isomorphism 
\begin{equation}\label{eq: t_1 Fib map}
\mf{t}_1( \Fib_{q,\Lambda}) \xrightarrow{\sim}   H^{1}(\Q_q; \Ad \rho_{\Lambda})/H^{1}(\Z_q; \Ad \rho_{\Lambda}) .
\end{equation}
This is the ``ramified part'' of the deformation space for $\rho$. The fact that $\ol{\rho}$ is unramified forces any such deformations to be tamely ramified. Then \cite[Lemma 8.3]{GV} shows that the deformation functor into $\wh{G}$ is weakly equivalent to the deformation functor into $\wh{T}$, and in particular:
\[
\frac{H^{1}(\Q_q; \Ad \rho_{\Lambda})}{H^{1}(\Z_q; \Ad \rho_{\Lambda})}  \cong \frac{H^1(\Q_q; \Lie(\wh{T}) \otimes \Lambda) }{H^1(\Z_q; \Lie(\wh{T}) \otimes \Lambda )} \cong \Hom(I_q, \Lie(\wh{T}) \otimes \Lambda),
\]
where $I_q$ is the tame inertial subgroup of $\Gal(\ol{\Q}_p/\Q_p)^{\ab}$. As $ \Lie(\wh{T}) = X_*(\wh{T}) \otimes \Cal{O}$, we have by class field theory 
\begin{align*}
\Hom(I_q, \Lie(\wh{T}) \otimes_{\Cal{O}} \Lambda) & \cong \Hom(\F_q^{\times}, X_*(\wh{T}) \otimes_{\Cal{O}} \Lambda) \\
&  \cong \Hom(\F_q^{\times} \otimes X_*({T}), \Lambda) \cong \Hom(T_q, \Lambda).
\end{align*}
Hence we conclude that 
\[
\mf{t}_1(\Fib_{q,\Lambda})  \cong H^1(T_q; \Lambda) \cong H_1(T_q; \Lambda)^{*},
\]
where $H_1(T_q; \Lambda)^* := \Hom_{\Lambda}(H_1(T_q; \Lambda), \Lambda)$.

\subsubsection{Calculation of $\mf{t}_2(\Fib_{q,\Lambda})$}\label{sssec: t_2(Fib)}

The long exact sequence immediately shows that $\mf{t}_2(\Fib_{q,\Lambda}) \cong H^2(\Q_q; \Ad \rho_\Lambda)$, but we want to write this in another way. Again by \cite[Lemma 8.3]{GV}, the map 
\[
H^2(\Q_q; \Lie(\wh{T}) \otimes \Lambda)  \rightarrow H^2(\Q_q; \Lie(\wh{G}) \otimes\Lambda)
\]
is an isomorphism.  By Tate local duality, 
\[
H^2(\Q_q; \Lie(\wh{T}) \otimes \Lambda)   \cong H^0(\Q_q; (\Lie(\wh{T}) \otimes {\Lambda})^{*} (1))^{*}
\]
where $^*$ denotes the Pontrjagin dual (i.e. dual over $\Lambda$, in our situation) and $(1)$ denotes the Tate twist. Let $p^m$ be the smallest power of $p$ which is is $0$ in $\Lambda$; our assumption implies $q \equiv 1 \pmod{p^m}$. Now, we have canonical identifications 
\begin{align*}
(\Lie(\wh{T}) \otimes_{\Cal{O}} \Lambda)^*(1)  & \xrightarrow{\sim} X^*(\wh{T})_{\Lambda} \otimes_{\Z/p^m \Z}  \mu_{p^m}  \\
& = X_*(T) \otimes_{\Cal{O}} \Lambda \otimes_{\Z/p^m \Z}  \mu_{p^m} \xrightarrow{\sim} T(\F_q)[p^m] \otimes_{\Z/p^m \Z} \Lambda.
\end{align*}
Hence we have constructed an isomorphism 
\begin{equation}\label{eq: t_2(Fib)}
H^2(\Q_q; \Lie(\wh{T}) \otimes_{\Cal{O}} \Lambda)  \cong \Hom(T(\F_q)[p^m], \Lambda)  \cong H_2(T_q; \Lambda)_{\mrm{prim}}^{*}
\end{equation}
where $H_2(T_q; \Lambda)_{\mrm{prim}}$ is the subspace of primitives elements in $H_2(T_q; \Lambda)$ with respect to the coproduct on $H_*(T_q, \Lambda)$ dual to the cup product on $H^*(T_q; \Lambda)$. In other words, $H_2(T_q; \Lambda)_{\mrm{prim}}$ is dual to the indecomposable quotient of $H^2(T_q; \Lambda)$. Non-canonically, if we choose $T_q \xrightarrow{\sim} (\F_q^{\times})^r$, then $ H_2(T_q; \Lambda)_{\mrm{prim}} \xrightarrow{\sim} H_2(\F_q^{\times}; \Lambda)^{\oplus r}$. 

\section{Co-action on the global derived Galois deformation ring}\label{sec: spectral action} 

\subsection{Analogies and metaphors}\label{ssec: analogy}

The ``categorical trace of Frobenius'' formalism \cite{Ga15}, \cite{GKNV} can be used to turn categorical statements into function-theoretic statements in a systematic way. The Galois deformation ring looks like the categorical trace of Frobenius on (the category of quasicoherent sheaves on) the formal completion of $\LocSys_{\wh{G}}$ at a point, and our spectral Hecke algebra looks like the trace of Frobenius on (the category of quasicoherent sheaves on) the formal completion of $\Hecke(\wh{G}, \spec)^{\loc}_x$ at the corresponding point. Therefore, trying to take the categorical trace of Frobenius of the action in \S \ref{ssec: GL spectral action} would lead one to expect an action of the spectral Hecke algebra on the global Galois deformation ring. 

Note however that by the discussion of \S \ref{ssec: GL spectral action}, the algebra structure for this action should not be for the multiplication of functions, which would be the trace of the monoidal structure given by tensor product on $\QCoh(\Hecke(\wh{G}, \spec)^{\loc}_x)$. In the context of the analogy between $\QCoh$ and functions, there is also a loose analogy between $\IndCoh$ and ``measures'' \cite[Preface \S 1.3]{GR17}, which suggests that we should instead be considering a ring structure that comes from a ``convolution of measures'' with respect to the map 
\begin{equation}\label{eq: hecke convolution}
\Hecke(\wh{G}, \spec)^{\loc}_q \times_{\LocSys_{\wh{G}, \Z_q}} \Hecke(\wh{G}, \spec)^{\loc}_q \rightarrow \Hecke(\wh{G}, \spec)^{\loc}_q.
\end{equation}
We don't see how to make sense of this formally, so we change the game: the diagram \eqref{eq: hecke convolution} induces a \emph{co-algebra} structure on rings of functions via pullback, which would be dual to a convolution product on measures if that actually existed. Therefore, we will define a \emph{co-action} of the spectral Hecke algebra on the global derived deformation ring. Then the local-global compatibility of \S \ref{ssec: GL local-global} suggests that this action should look dual to the action of the derived Hecke algebra on the cohomology of arithmetic groups. 

This seems to be justified by the global picture: \cite[\S 15]{GV} explains that (under favorable assumptions) the global derived Galois deformation ring and the global derived Hecke algebra are \emph{dual}, and act in a dual manner on the cohomology of arithmetic groups. In some sense our results give a local ``explanation'' for the appearance of this duality.

\subsection{Groupoids arising from Hecke-type constructions}\label{ssec: hecke groupoids}
In the hope of putting the forthcoming constructions in a broader context, we begin with a brief discussion of the underlying ``pattern'' of groupoids and groupoid actions arising from Hecke-type constructions. This subsection is somewhat motivational, and can safely be skipped. The point of presenting it is to clarify the relevant structure in an idealized situation, whereas we will later be studying a more homotopy-theoretic situation where the discussion would be muddled by concerns related to homotopy coherence. 

\subsubsection{Groupoid actions}
We recall the formalism of groupoid actions \cite[\href{https://stacks.math.columbia.edu/tag/0230}{Tag 0230}]{stacks-project}. A \emph{groupoid $\Cal{G}$} in a category $\Cal{C}$ (with fibered products) consists of the following data:
\begin{enumerate}
\item A pair of objects $\mrm{Arr}, \mrm{Ob} \in \Cal{C}$ with two maps (``source'' and ``target'') $s,t \co \mrm{Arr} \rightrightarrows \mrm{Ob}$.
\item (``Identity'') A map $e \co \mrm{Ob} \rightarrow \mrm{Arr}$. 
\item (``Inverse'') A map $i \co \mrm{Arr} \rightarrow \mrm{Arr}$. 
\item (``Composition'') A partially defined composition law 
\[
\mu \co \mrm{Arr} \times_{s, \mrm{Ob}, t} \mrm{Arr} \rightarrow \mrm{Arr}.
\]
\end{enumerate}
These must satisfy: associativity of $\mu$, an ``identity axiom'', and an ``inverse axiom''. In this situation we say that ``$\mrm{Arr}$ is a groupoid over $\mrm{Ob}$''. 

Let $\Cal{G} = (\mrm{Arr}, \mrm{Ob},s,t,e,i,\mu)$ be a groupoid in $\Cal{C}$, and $E \in \Cal{C}$ be an object. An \emph{action} of $\Cal{G}$ on $E$ is defined by the data of: 
\begin{enumerate}
\item a map $\pi \co E \rightarrow \mrm{Ob}$, and 
\item a map $ a \co  \mrm{Arr} \times_{s, \mrm{Ob}, \pi} E \rightarrow E$
\end{enumerate}
satisfying for all $g,h \in \mrm{Arr}$ and $e \in E$: $\pi(a(g,e)) = t(g)$ when this is defined, and $a((gh), e) = a(g, a(h,e))$ when this is defined. 

\subsubsection{Hecke-type constructions}
Now suppose $X,Y,Z$ are objects in a category $\Cal{C}$ that admits fibered products, and we have maps $f \co X \rightarrow Z$ and $g \co Y \rightarrow Z$. Then $Y \times_Z Y$ has the structure of a groupoid over $Y$, and that there is a natural action of $Y \times_Z Y$ on $Y \times_Z X$. 
 \begin{itemize}
\item The maps $s,t \co Y \times_Z Y \rightarrow Y$ are the obvious projections. 
\item The map $e: Y \rightarrow Y \times_Z Y$ is the diagonal. 
\item The map $i \co Y \times_Z Y \rightarrow Y \times_Z Y$ is the ``swap'' of the two factors of $Y$. 
\item The composition
\[
(Y \times_Z Y) \times_Y (Y \times_Z Y) \simeq Y \times_Z Y \times_Z Y \rightarrow Y \times_Z Y 
\]
is the projection to the outer two factors of $Y$ (alternatively interpreted, ``convolution over middle coordinate''). 
\end{itemize}

The action of $Y \times_Z Y$ on $Y \times_Z X$ specified by: 
\begin{enumerate}
\item $\pi \co Y \times_Z X \rightarrow Y$ is projection to the first factor. 
\item $a \co (Y \times_Z Y) \times_Y (Y \times_Z X) \rightarrow  Y \times_Z X$ is projection to the outer factors. 
\end{enumerate}

For psychological comfort, we give a few examples of how the preceding formalism is familiar in algebraic geometry. 

\begin{example}
Suppose that $Y \rightarrow Z$ is a $G$-torsor in schemes for a group scheme $G / Z$. Then $ Y \times_Z Y \cong G \times Y$, and the $Y \times_Z Y$-action on $Y$ is equivalent to the given $G$-action on $Y$. 
\end{example}

\begin{example}
Suppose $Y \rightarrow Z$ is a faithfully flat map of schemes. Then $\Cal{G} := Y \times_Z Y$ is a groupoid over $Y$. If $\Cal{F}$ is a sheaf on $Y$, then a $\Cal{G}$-equivariant structure on $\Cal{F}$ is equivalent to the usual notion of descent datum for the cover $Y \rightarrow Z$, which induces an equivalence of categories $\QCoh(Z) \cong \QCoh_{\Cal{G}}(Y)$.
\end{example}

\subsection{The derived Galois deformation ring}\label{ssec: global derived def ring} We now review the setup of the Galatius-Venkatesh derived Galois deformation ring, in preparation for the definition of the co-action. Let $\wh{G}$ be a split adjoint group with trivial center over $\Cal{O} = W(k)$.


Suppose we are given a Galois representation $\ol{\rho} \co \Gal(\ol{\Q}/\Q) \rightarrow \wh{G}(k)$ satisfying the assumptions in \cite[Conjecture 6.1]{GV}: in particular, we suppose $\ol{\rho}$ is Fontaine-Laffaille at $p$ and has ``large image'', i.e. $\mrm{image}(\ol{\rho}) \supset \mrm{image}(\wh{G}^{\mrm{sc}}(k) \rightarrow \wh{G}(k))$. Let $S$ be a finite set of places of $\Q$, containing $p$ and the ramified places of $\ol{\rho}$. 

There is a derived Galois deformation functor $\Cal{F}_{\Z[1/S], \ol{\rho}}^{\mrm{crys}}$, which sends an Artinian SCR $A$ augmented over $k$ to 
\begin{quote}
``the space of representations of $\Gal(\ol{\Q}/\Q) \rightarrow \wh{G}(A)$ unramified outside $S$, and crystalline at $p$, which reduce to $\ol{\rho}$''.
\end{quote}
This is actually rather delicate to define precisely; we will sketch it below. Galatius-Venkatesh show that it is pro-representable, and we denote by $\Cal{R}_{S}$ a representing pro-ring (suppressing the dependence on $\ol{\rho}$). By \cite[Lemma 7.1]{GV}, $\pi_0 (\Cal{R}_{S})$ recovers the usual (underived) ring pro-representing the usual crystalline deformation functor of $\ol{\rho}$.

Now we briefly sketch the definition of $\Cal{F}_{\Z[1/S], \ol{\rho}}^{\mrm{crys}}$. First we define a version without the crystalline condition, denoted $\Cal{F}_{\Z[1/S], \ol{\rho}}$. To do this we view $\pi_1(\Z[1/S]) = \pi_1(X)$ where $X$ is the \'{e}tale homotopy type of $\Spec \Z[1/S]$ in the sense of Friedlander, which is a pro simplicial set; we write $X =  (X_{\alpha})$ for a presentation of $X$ as a pro-system of simplicial sets. \cite{GV} considers the derived deformation functor $\Cal{F}_{\Z[1/S], \ol{\rho}}$ whose value on an SCR $A$ is the simplicial set obtained by taking the homotopy fiber of map (of simplicial sets)
\[
\varinjlim_{\alpha} \Hom( X_{\alpha}, BG(A)) \rightarrow \varinjlim_{\alpha} \Hom( X_{\alpha}, BG(k))
\]
over the zero-simplex $\rho$ in the codomain. Here $BG(A)$ is defined by mapping a (cofibrant replacement of) the bar construction for $\Cal{O}_G$ to $A$ (and not, as one might na\"{i}vely guess, as $B(G(A))$). See \cite[\S 5 and \S 7.3]{GV} for the details. Restriction induces a map  $\Cal{F}_{\Z[1/S], \ol{\rho}}^{\mrm{crys}} \rightarrow \Cal{F}_{\Q_v, \ol{\rho}}$ where $\Cal{F}_{\Q_v, \ol{\rho}}$ is an analogous local deformation functor. Locally one defines a crystalline deformation functor $\Cal{F}_{\Q_p, \ol{\rho}}^{\crys}$ by imposing the crystalline condition on $\pi_0(A)$, and the global crystalline deformation functor $\Cal{F}_{\Z[1/S], \ol{\rho}}^{\mrm{crys}}$ is then obtained by taking the homotopy fibered product of 
$\Cal{F}_{\Z[1/S], \ol{\rho}}$ and $\Cal{F}_{\Q_p, \ol{\rho}}^{\crys}$ over $\Cal{F}_{\Q_p, \ol{\rho}}$; see \cite[\S 9]{GV} for the details.

\subsection{Hecke co-action on derived deformation rings}\label{ssec: action on deformation ring}

Now we will essentially explicate the construction of \S \ref{ssec: hecke groupoids} in the category of derived schemes. However, the preceding discussion needs to be modified because this is a homotopy-theoretic situation, e.g. fibered product needs to become homotopy fibered product, etc. We will just forget the axiomatic framework and explicitly give the constructions for simplicial commutative rings.

  Let $A,B,C$ be SCRs, and $C \rightarrow A$ and $C \rightarrow B$ be homomorphisms of SCRs. Assume that $C \rightarrow A$ and $C \rightarrow B$ are both cofibrations. Then we have the following structure on $B \otimes_C B$:
  \begin{itemize}
\item Homomorphisms $s,t \co B \rightrightarrows (B \otimes_C B)$ into the first and second factors. 
\item An augmentation $e \co B \otimes_C B \rightarrow B$ given by multiplication.
\item A ``swap'' $i \co B \otimes_C B \rightarrow B \otimes_C B$.
  \item A coproduct 
\begin{equation}\label{eq: comult}
B \otimes_C B \rightarrow (B \otimes_C B) \otimes_B (B  \otimes_C B).
\end{equation}
sending $b_1 \otimes b_2 \mapsto b_1 \otimes 1 \otimes b_2$. 
\end{itemize}

We also have a co-action of $B \otimes_C B$ on $B \otimes_C A$ as $B$-algebras, given by the map
\[
B \otimes_C A \rightarrow   (B \otimes_C B)  \otimes_B (B \otimes_C A)  
\]
sending $b \otimes a \mapsto (b \otimes 1 ) \otimes (1 \otimes a)$.

We let $\Cal{R}_S$ be the global deformation ring of $\rho$ discussed above in \S \ref{ssec: global derived def ring}, and $\Cal{R}_{Sq}$ the global deformation ring allowing additional ramification at $q$, i.e. the same construction but with $S$ replaced by $S \cup \{q\}$. By \cite[\S 8]{GV} we have 
\begin{equation}\label{eq: adding ramification 1}
\Cal{F}_{S, \ol{\rho}}^{\crys}  \xrightarrow{\sim} \Cal{F}_{S q,\ol{\rho}}^{\crys}  \times_{\Cal{F}_{\Q_q,\ol{\rho}}}^h \Cal{F}_{\Z_q,\ol{\rho}} \xrightarrow{\sim} \Cal{F}_{S q,\ol{\rho}}^{\crys}  \times_{\Cal{F}_{\Q_q,\ol{\rho}}^{\wh{T},\square}}^h \Cal{F}_{\Z_q,\ol{\rho}}^{\wh{T}, \square}.
\end{equation}
Note that the first equality expresses the intuition that the space of deformations ramified at $S$ can be obtained from the space of deformations ramified at $Sq$ by imposing a local unramifiedness condition at $q$. At the level of representing (pro-)rings, this means that
\begin{equation}\label{eq: adding ramification 2}
\Cal{R}_{Sq} \ul{\dotimes}_{\Cal{S}_q}\Cal{S}_q^{\mrm{ur}} \approx \Cal{R}_S.
\end{equation}
Now we apply the preceding discussion with $C = \Cal{S}_q$, $A$ a cofibrant replacement of $\Cal{R}_{Sq}$ as a $C$-algebra, and $B$ a cofibrant replacement of $\Cal{S}_q^{\mrm{ur}}$ as a $C$-algebra, getting in particular a co-multiplication (not a priori co-commutative) over $\Cal{S}_q^{\mrm{ur}}$,
\[
\Cal{S}_q^{\Hk} \rightarrow \Cal{S}_q^{\Hk} \dotimes_{\Cal{S}_q^{\mrm{ur}}} \Cal{S}_q^{\Hk}
\]
and a co-action over $\Cal{S}_q^{\mrm{ur}}$,
\[
 \Cal{R}_S \rightarrow  \Cal{R}_S \dotimes_{\Cal{S}_q^{\mrm{ur}}}  \Cal{S}_q^{\Hk}.
\]

\section{Comparison with the derived Hecke algebra}\label{sec: comparison}

We will now explain the local comparison between the derived Hecke algebra and the spectral Hecke algebra at Taylor-Wiles primes. This step is analogous to the role of the derived Geometric Satake equivalence in \S \ref{ssec: GL local-global}.

\subsection{The local derived Hecke algebra}

We briefly review the theory of derived Hecke algebra from \cite{V}.

We keep the setup of \S \ref{ssec: global derived def ring}, so in particular $G$ is a split reductive group $\Q$. Let $U \subset G(\Q_q)$ be a compact open subgroup. (For our purposes, we can take $U = K_q$ to be the maximal compact subgroup.)

Denoting $\Lambda[G(\Q_q)/U]$ for the compact-induction of the trivial representation from $U$ to $G(\Q_q)$, we can present the usual Hecke algebra for the pair $(G(\Q_q),U)$ as 
\[
H(G(\Q_q),U; \Lambda) := \Hom_{G(\Q_q)}(\Lambda[G(\Q_q)/U],\Lambda[G(\Q_q)/U]).
\]
This presentation suggests the following generalization. 

\begin{defn}
The \emph{derived Hecke algebra} for $(G(\Q_q),U)$ with coefficients in a ring $\Lambda$ is 
\[
\Cal{H}(G(\Q_q),U;\Lambda) := \Ext^*_{G(\Q_q)}(\Lambda[G(\Q_q)/U],\Lambda[G(\Q_q)/U]),
\]
where the $\Ext$ is formed in the category of smooth $G(\Q_q)$-representations. For $U= K_q$, we abbreviate $\Cal{H}_q(G(\Q_q);\Lambda) := \Cal{H}(G(\Q_q), U;\Lambda)$. 
\end{defn}


We next give a couple more concrete descriptions of the derived Hecke algebra \cite[\S 2]{V}.

\subsubsection{Function-theoretic description}\label{sssec: DHA function-theoretic}

Let $x,y \in G(\Q_q)/U$ and $G_{xy} \subset G$ be the stabilizer of the pair $(x,y)$. We can think of $\Cal{H}(G(\Q_q),U; \Lambda)$ as consisting of functions 
\[
  G(\Q_q)/U \times G(\Q_q)/U \ni (x,y) \mapsto h(x,y) \in H^*(G_{xy};\Lambda)
\]
satisfying the following constraints: 
\begin{enumerate}
\item The function $h$ is ``$G$-invariant'' on the left. More precisely, we have 
\[
[g]^* h(gx,gy) = h(x,y)
\]
where $[g]^* \co H^*(G_{gx,gy}; \Lambda) \rightarrow H^*(G_{x,y}; \Lambda)$ is pullback by $\Ad(g)$. 
\item The function $h$ has finite support modulo $G$. 
\end{enumerate}
The multiplication is given by a convolution formula, where one uses the cup product to define multiplication on the codomain, and restriction/inflation to shift cohomology classes to the correct groups \cite[eqn. (22)]{V}.

\subsubsection{Double coset description}\label{sssec: double coset description}

For $x \in G/U$, let $U_x = \Stab_U(x)$. Explicitly, if $x = g_xU$ then $U_x :=  U \cap \Ad(g_x) U$.

We can also describe $\Cal{H}(G(\Q_q),U;\Lambda)$ as functions 
\[
x \in U \bs G(\Q_q) / U \mapsto h(x) \in H^*(U_x; \Lambda)
\]
which are compactly supported, i.e. supported on finitely many double cosets. (However, it is harder to describe the multiplication in this presentation.) 


\subsubsection{The derived Hecke algebra of a torus}
Let $T$ be a split torus. Let's unravel the derived Hecke algebra of the torus $T(\Q_q)$, using now the double coset model. We set $T^{\circ}  = T(\Z_q)$ for its maximal compact subgroup. Since $T$ is abelian we simply have $T^{\circ}_x = T^{\circ}$ for all $x$. We have $T(\Q_q)/T^{\circ} \cong X_*(T)$. Identify
\begin{equation}\label{eq: satake embed}
X_*(T) = T(\Q_q) / T^{\circ} \hookrightarrow G/K_q
\end{equation}
by the map $X_*(T) \ni \chi  \mapsto \chi(\varpi_q) \in G/K_q$, where $\varpi_q$ is a uniformizer of $\Q_q$.

Next, writing $T_q := T(\F_q)_{(p)}$ as in \S \ref{ssec: tangent complex of Fib}, there is a canonical splitting $T_q \rightarrow T^{\circ}$ that splits the reduction map, and induces an isomorphism on cohomology (since we assume that $q$ is distinct from the residue characteristic $p$ of $\lambda$) 
\[
H^*(T_q; \Lambda) \xleftarrow{\sim} H^*(T^{\circ}; \Lambda).
\]
The upshot is that $\Cal{H}(T(\Q_q); \Lambda)$ simply consists of compactly supported functions
\[
X_*(T) \rightarrow H^*(T_q;\Lambda)
\]
with the multiplication given by convolution; in other words,
\[
\Cal{H}(T(\Q_q); \Lambda) \cong \Lambda[X^*(T)] \otimes_\Lambda H^*(T_q; \Lambda),
\]

\subsubsection{The derived Satake isomorphism}

We henceforth assume that $q \equiv 1 \in \Lambda$. Let $U = G(\Z_q)$ be a hyperspecial maximal compact subgroup of $G(\Q_q)$. We consider an analog of the classical Satake transform for the derived Hecke algebra $\Cal{H}_q(G(\Q_q);\Lambda)$, which takes the form 
\[
``\text{Derived Hecke algebra for $G$}  \xrightarrow{\sim} (\text{Derived Hecke algebra for maximal torus})^W."
\]

More precisely, let $T$ be a split maximal torus of $G$ such that $U \cap T(\Q_q)$ is the maximal compact subgroup $T(\Q_q)$. We define the \emph{derived Satake transform}
\begin{equation}\label{eq: satake transform}
  \Cal{H}(G(\Q_q);\Lambda) \rightarrow \Cal{H}(T(\Q_q); \Lambda)
\end{equation}
simply by \emph{restriction} (in the function-theoretic model \S \ref{sssec: DHA function-theoretic}) along the map $(T(\Q_q)/T^{\circ})^2 \rightarrow (G/K_q)^2 $ from \eqref{eq: satake embed}. In more detail, let $h \in \Cal{H}_q(G; \Lambda)$ be given by the function 
\[
 (G_v/K_v)^2 \ni (x,y) \mapsto h(x,y) \in H^*(G_{x,y};\Lambda).
\]
Then \eqref{eq: satake transform} takes $h$ to the composition 
\[
\begin{tikzcd}
(T(\Q_q)/T^{\circ} )^2  \ar[r, hook] & (G(\Q_q)/K_q)^2 \ar[r, "h"] & H^*(G_{x,y};\Lambda)  \ar[r, "\res"]  &  H^*(T_{x,y};\Lambda)
\end{tikzcd}
\] 

\begin{remark}It may be surprising that this is the right definition, since the analogous construction in characteristic 0, on the usual underived Hecke algebra,  is far from being the usual Satake transform. It is only because of our assumptions on the relation between the characteristics (namely, that $q \equiv 1 \in \Lambda$) that this ``na\"{i}ve'' definition turns out to be correct. 
\end{remark}

\begin{thm}[{\cite[Theorem 3.3]{V}}] \label{thm: derived sat isom}
Let $W$ be the Weyl group of $T$ in $G$. Under the assumptions of this section, the map \eqref{eq: satake transform} induces an isomorphism 
\[
\mrm{dSat}_q \co \Cal{H}(G(\Q_q);\Lambda) \xrightarrow{\sim} \Cal{H}(T(\Q_q); \Lambda)^W.
\]
\end{thm}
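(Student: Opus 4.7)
The plan is to adapt the classical proof of the Satake isomorphism, using upper triangularity with respect to the dominance order on $X_*(T)$ for injectivity and an inductive construction for surjectivity, with the main new ingredient being a control of the restriction on cohomology coefficients.

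First I would put both sides in explicit double-coset form. On the target, the computation already recorded in the excerpt gives $\Cal{H}(T(\Q_q);\Lambda) \cong \Lambda[X_*(T)] \otimes_\Lambda H^*(T_q;\Lambda)$, and $W$ acts through its action on $X_*(T)$ together with the induced action on $H^*(T_q;\Lambda)$ via $X^*(T_q) \cong X^*(T) \otimes_{\Z} (\Z/p^N)$. On the source, the Cartan decomposition indexes double cosets $K_q \backslash G(\Q_q) / K_q$ by dominant cocharacters $\lambda \in X_*(T)^+$; for $t_\lambda := \lambda(\varpi_q)$, set $U_\lambda := K_q \cap t_\lambda K_q t_\lambda^{-1}$. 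Using the Iwahori factorization of $U_\lambda$ with respect to the parabolic defined by $\lambda$, one gets a normal pro-$q$ subgroup $N_\lambda$ with quotient the $\F_q$-points of a Levi $M_\lambda \supset T$. Since $q$ is invertible in $\Lambda$, inflation $H^*(M_\lambda(\F_q);\Lambda) \xrightarrow{\sim} H^*(U_\lambda;\Lambda)$ is an isomorphism. Restriction along $T^\circ \hookrightarrow U_\lambda$ thus factors through the well-studied map $H^*(M_\lambda(\F_q);\Lambda) \to H^*(T_q;\Lambda)$.

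Second, I would verify that the Satake transform \eqref{eq: satake transform} lands in $W$-invariants. Any two elements $\lambda, w\lambda$ of $X_*(T) \hookrightarrow G(\Q_q)/K_q$ are in the same $K_q$-orbit (any lift of $w$ in the normalizer $N_G(T)(\Z_q) \subset K_q$ conjugates one to the other), and by the $G$-invariance axiom in \S\ref{sssec: DHA function-theoretic}, the values at $(\lambda, \lambda)$ and $(w\lambda, w\lambda)$ agree after the pullback by $\Ad(w)$ — which is exactly the $W$-action built into the target. The same argument applied to $(x,y) = (\mu, \lambda)$ for arbitrary pairs shows $W$-equivariance of the full function.

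Third, I would prove upper triangularity. For $\lambda \in X_*(T)^+$ and a cohomology class $c \in H^*(U_\lambda;\Lambda)$, the ``characteristic element'' $h_{\lambda, c}$ supported on $K_q t_\lambda K_q$ has, under $\mrm{dSat}_q$, restriction supported on the set
\[
\{(\mu,\nu) \in X_*(T)^2 : \mu^{-1}\nu \in \text{support of } h_{\lambda,c} \text{ in } K_q \backslash G/K_q\},
\]
which is contained in $W\lambda$ plus strictly smaller dominant translates, by the Kostant–Rallis/Iwasawa analysis of $X_*(T) \cap K_q t_\lambda K_q$. At the leading term $\mu = \nu = \lambda$, the restriction map $H^*(U_\lambda;\Lambda) \to H^*(T_q;\Lambda)$ is surjective (by the Levi factorization above, together with the fact that $H^*(T_q;\Lambda) \to H^*(M_\lambda(\F_q);\Lambda) \to H^*(T_q;\Lambda)$ composes to the identity up to a Weyl-group averaging). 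This triangularity immediately gives injectivity of $\mrm{dSat}_q$.

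For surjectivity, I would induct on the dominance order: given a $W$-invariant element $f \in \Cal{H}(T(\Q_q);\Lambda)^W$ supported on $W\lambda$ plus lower cocharacters in its coefficient at $\lambda$, lift the leading coefficient $f(\lambda) \in H^*(T_q;\Lambda)$ to some class in $H^*(U_\lambda;\Lambda)$ (using the surjectivity above), form the corresponding $h_{\lambda,c} \in \Cal{H}_q(G(\Q_q);\Lambda)$, and subtract $\mrm{dSat}_q(h_{\lambda,c})$ from $f$ to reduce the dominant support. The main obstacle is exactly the control of the restriction map on cohomology — establishing that in the range $q \equiv 1$ in $\Lambda$ the surjection $H^*(U_\lambda;\Lambda) \twoheadrightarrow H^*(T_q;\Lambda)$ holds with the expected ``leading coefficient'' behavior, because this is where the classical (characteristic zero) derivation breaks down and where the hypothesis $q \equiv 1$ in $\Lambda$ genuinely enters. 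Multiplicativity of $\mrm{dSat}_q$ follows from the convolution description of both algebras together with the Iwasawa-type decomposition identifying the contributions of matching support.
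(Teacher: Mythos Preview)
The paper does not prove this theorem; it simply quotes it from \cite[Theorem 3.3]{V} and adds one remark on base change. So there is no ``paper's proof'' to compare against, and your task was really to reconstruct Venkatesh's argument.

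Your outline imports the upper-triangularity template from the classical Satake isomorphism, but with the \emph{naive restriction} definition of $\mrm{dSat}_q$ used here that template is misplaced, and this leads to a genuine gap. If $h\in\Cal{H}(G(\Q_q);\Lambda)$ is supported on the double coset $K_q t_\lambda K_q$, then in the function-theoretic model $h(x,y)\ne 0$ forces $(x,y)$ to lie in the $G$-orbit of $(e,t_\lambda)$; restricting to $(\mu,\nu)\in X_*(T)^2$ this means exactly $\nu-\mu\in W\lambda$, by Cartan. There are \emph{no} lower-order terms in the dominance order --- the map is block-diagonal over $X_*(T)^+$, not merely triangular. Your discussion of ``strictly smaller dominant translates'' and the inductive subtraction step is therefore unnecessary, and the Iwasawa/Kostant analysis you invoke does not enter.

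More seriously, the claim that the restriction $H^*(U_\lambda;\Lambda)\to H^*(T_q;\Lambda)$ is \emph{surjective} is wrong, and the logic ``surjectivity of the leading term $\Rightarrow$ injectivity of $\mrm{dSat}_q$'' is backwards in any case. What is actually true (and is the heart of Venkatesh's argument) is that under $q\equiv 1$ in $\Lambda$ the pro-$q$ radical of $U_\lambda$ contributes nothing, the reductive quotient is $M_\lambda(\F_q)$, and $T_q$ contains a Sylow $p$-subgroup of $M_\lambda(\F_q)$; hence restriction is \emph{injective} with image the stable classes, namely $H^*(T_q;\Lambda)^{W_\lambda}$ where $W_\lambda=\Stab_W(\lambda)$. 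Matching this against the orbit decomposition $\bigl(\Lambda[X_*(T)]\otimes H^*(T_q;\Lambda)\bigr)^W\cong\bigoplus_{\lambda\in X_*(T)^+}H^*(T_q;\Lambda)^{W_\lambda}$ finishes the proof immediately, with no induction. Your sketch would need to replace the surjectivity claim by this Sylow/stable-element argument and drop the triangularity scaffolding.
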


\begin{remark}
Technically \cite[Theorem 3.3]{V} is phrased only for $\Cal{O} = \Z_p$ and $\Lambda = \Z/p^m \Z$, but the more general version stated above follows immediately from that version by flat base change. 
\end{remark}

\subsection{The derived Hecke algebra vs. the spectral Hecke algebra}

\subsubsection{Localization of the derived Hecke algebra}\label{sssec: hecke localization}

Recall that the definition of Taylor-Wiles datum at $q$ includes a specification of $\Frob_q^{\wh{T}} \in \wh{T}(k)$. This datum is equivalent to that of a homomorphism of abelian groups 
\[
X_*(T) = X^*(\wh{T}) \rightarrow k^{\times},
\]
which is in turn equivalent to a character 
\[
\chi_{\Frob_q^{\wh{T}}} \co \Lambda[X_*(T)] \rightarrow k.
\]
Let $\mf{m}_{\chi}$ be the kernel of $\chi_{\Frob_q^T} $, which is a maximal ideal of $\Lambda[X_*(T)]$.

To compare the automorphic and Galois sides, we need to a Hecke eigensystem which is compatible with our representation. Recall that $G$ is a split semisimple simply connected group over $\Q$. Let $K$ be a level structure for $G(\A)$ and $Y(K)$ the associated locally symmetric space. Fix a pro-$p$ coefficient ring $E$ with an augmentation $E \surj k$, and let $\TT_{K}$ be the Hecke algebra (generated by Hecke operators at ``good primes'') acting on $C^*(Y(K), E)$ in the derived category (cf. \cite[\S 6.6]{GV}). Fix a Hecke eigensystem $\TT_K \surj k$, say with maximal ideal $\mf{m}$. This pulls back to a maximal ideal $\mf{m}_q$ of the local underived Hecke algebra $H(\G(\Q_q); \Lambda)$, and we let $\Cal{H}_q(\Lambda)$ be the completed local ring of $\Cal{H}(G(\Q_q); \Lambda)$ at $\mf{m}_\chi$. By combining Theorem \ref{thm: derived sat isom} with \cite[eqn. (147)]{V}, we find an isomorphism 
\begin{equation}\label{eq: localized DHA}
\Cal{H}_q(\Lambda) \xrightarrow{\sim} \Lambda[X_*(T)]_{\mf{m}_{\chi}} \otimes_\Lambda H^*(T_q; \Lambda)
\end{equation}
where on the right hand side, $\Lambda[X_*(T)]_{\mf{m}_{\chi}}$ denotes the completed local ring of $\Lambda[X_*(T)]$ at $\mf{m}_{\chi}$. 

\begin{remark}\label{rem: localize satake}
Said geometrically, we are localizing the finite map of schemes corresponding to Theorem \ref{thm: derived sat isom} at points where it is totally split by the strong regularity assumption, hence we obtain an isomorphism of completed local rings.
\end{remark}

\begin{defn}We denote $H_q(\Lambda) := \Lambda[X_*(T)]_{\mf{m}_{\chi}}$, the degree $0$ part of $\Cal{H}_q(\Lambda)$. 
\end{defn}

\subsubsection{Homotopy groups of derived tensor products}
Let $R$ be a simplicial commutative ring, and $A$ and $B$ be simplicial $R$-algebras.

Recall the spectral sequence for homotopy groups of a tensor product \cite[eqn. (5.2)]{Qui70}:
\begin{equation}\label{eq: htpy of tensor product}
E_{ij}^2 = \Tor^{\pi_*(R)}_i(\pi_*(A) , \pi_*(B))_j \implies \pi_{i+j}(A \dotimes_R B).
\end{equation}
Here the $j$-grading comes from the grading on $\pi_*(A), \pi_*(B)$ as modules over $\pi_*(R)$. 

In particular, we always have an edge map 
\[
\pi_*(A \dotimes_R B) \rightarrow \pi_*(A) \otimes_{\pi_*(R)} \pi_*(B). 
\]

\begin{example}\label{ex: degenerate edge map}
If $R$ happens to be homotopy discrete with $\mrm{R} := \pi_0(R)$, and $\pi_*(A)$ or $\pi_*(B)$ is flat over $\mrm{R}$, then \eqref{eq: htpy of tensor product} degenerates on $E^2$ and this edge map is an isomorphism: 
\begin{equation}
\pi_*(A \dotimes_{R} B) \xrightarrow{\sim} \pi_*(A) \otimes_{\mrm{R}} \pi_*(B).
\end{equation}
\end{example}

\subsubsection{Comparison with the spectral Hecke algebra}\label{sssec: local comparison}

In \S \ref{sec: spectral action} we equipped the spectral Hecke algebra $\Cal{S}_q^{\Hk}$ with a coproduct over $\Cal{S}_q^{\ur}$. By Corollary \ref{cor: homotopy groups of SHA} we have 
\[
\pi_*( \Cal{S}_q^{\Hk} \dotimes_{\Cal{O}} \Lambda) \cong \mrm{S}_q^{\ur} \otimes_{\Cal{O}} H_*(T_q; \Lambda).
\]
If $q \equiv 1 \in \Lambda$, then $H_*(T_q; \Lambda)$ is actually free over $\Lambda$. In this case $\pi_*( \Cal{S}_q^{\Hk} \otimes_{\Cal{O}} \Lambda)$ is free over $\Lambda$. Hence Example \ref{ex: degenerate edge map} applies in our case with $R = \mrm{S}_q^{\ur}$ and $A= B  = \Cal{S}_q^{\Hk}$, implying that 
\[
\pi_*(\Cal{S}_q^{\Hk} \dotimes_{\mrm{S}_q^{\ur}} \Cal{S}_q^{\Hk} \dotimes_{\Cal{O}} \Lambda) \xrightarrow{\sim} \pi_*(\Cal{S}_q^{\Hk} \dotimes_{\Cal{O}} \Lambda) \otimes_{\mrm{S}_q^{\ur} \otimes_{\Cal{O}} \Lambda} \pi_* (\Cal{S}_q^{\Hk} \dotimes_{\Cal{O}} \Lambda).
\]
Hence the coproduct on $\Cal{S}_q^{\Hk} \dotimes_{\Cal{O}} \Lambda $ induces a coproduct on $\pi_* (\Cal{S}_q^{\Hk} \dotimes_{\Cal{O}} \Lambda)$. 

To compare this to the derived Hecke algebra, we dualize. Define  
\[
\pi_*( \Cal{S}_q^{\Hk} \dotimes_{\Cal{O}} \Lambda)^\vee := \Hom_{\mrm{S}_q^{\ur} \otimes_{\Cal{O}} \Lambda}(\pi_*( \Cal{S}_q^{\Hk} \dotimes_{\Cal{O}} \Lambda), \mrm{S}_q^{\ur} \otimes_{\Cal{O}} \Lambda). 
\]
Since $\pi_*( \Cal{S}_q^{\Hk} \dotimes_{\Cal{O}} \Lambda)$ is free over $\mrm{S}_q^{\ur} \otimes_{\Cal{O}} \Lambda $, the $\mrm{S}_q^{\ur}$-co-algebra structure on $\pi_*(\Cal{S}_q^{\Hk} \dotimes_{\Cal{O}} \Lambda)$ induces a $\mrm{S}_q^{\ur}\dotimes_{\Cal{O}} \Lambda$-algebra structure on $\pi_*( \Cal{S}_q^{\Hk} \dotimes_{\Cal{O}} \Lambda)^\vee$. 

By \eqref{eq: SHA = group homology} we can also present 
\[
\Cal{S}_q^{\Hk} \dotimes_{\Cal{O}} \Lambda \xrightarrow{\sim} \mrm{S}_q^{\ur} \otimes_{\Cal{O}} (\Lambda \dotimes_{\Lambda[T_q]} \Lambda).
\]
This implies that 
\begin{align*}
\Hom_{\mrm{S}_q^{\ur}}(\pi_*( \Cal{S}_q^{\Hk} \dotimes_{\Cal{O}} \Lambda), \mrm{S}_q^{\ur}) & \cong \Hom_{\Lambda}(\pi_*(\Lambda \dotimes_{\Lambda[T_q]} \Lambda), \Lambda \otimes_{\Cal{O}} \mrm{S}_q^{\ur})  \\
&\cong \mrm{S}_q^{\ur} \otimes_{\Cal{O}} \Hom_{\Lambda}
(H_*(T_q; \Lambda), \Lambda).
\end{align*}  
Since $T(\F_q) \cong (\Z/(q-1) \Z)^r$ and $q \equiv 1 \in \Lambda$, the term $\Hom_{\Lambda}
(H_*(T_q; \Lambda), \Lambda)$ is canonically identified with the group cohomology $H_*(T_q; \Lambda)$, and the coproduct on homology dualizes to the usual cup product on cohomology, by the general relation between the coproduct on $\Tor$ and the Yoneda product on $\Ext$ \cite[p. 648]{E95}. Hence we have an identification of algebras 
\begin{equation}\label{eq: dual homotopy groups}
\pi_*( \Cal{S}_q^{\Hk} \dotimes_{\Cal{O}} \Lambda)^\vee \cong \mrm{S}_q^{\ur} \otimes_{\Cal{O}} H^*(T_q; \Lambda).
\end{equation}


The classical Satake isomorphism gives an identification 
\[
H(G(\Q_q), \Z[q^{\pm 1/2}]) \xrightarrow{\sim} R(\wh{G}) \otimes \Z[q^{\pm 1/2}],
\]
where $R(\wh{G})$ the latter is the representation ring of $\wh{G}$, i.e. the Grothendieck group of the category of finite-dimensional complex $\wh{G}$-representations, equipped with multiplication induced by tensor product. Since the assumption $q \equiv 1 \in \Lambda$ equips $\Lambda$ with a canonical square root of $q$, we get an isomorphism 
\begin{equation}\label{eq: classical satake isomorphism}
H_q := H(G(\Q_q), \Lambda) \xrightarrow{\sim} R_{\Lambda}(\wh{G}) := R(G)\otimes_{\Z} \Lambda.
\end{equation}
Hence we may also view $\mf{m}_{\chi}$ as a maximal ideal of $R_{\Lambda}(\wh{G})$, which we denote by the same name. We have a finite map $R(G)\otimes_{\Z} \Lambda \rightarrow \Lambda[X^*(\wh{T})]$, which induces an isomorphism between the completion of $R_{\Lambda}(\wh{G})$ at $\mf{m}_{\chi}$ and $\mrm{S}_q^{\ur} \otimes_{\Cal{O}} \Lambda$ for the same reason as in Remark \ref{rem: localize satake}. Composing this with \eqref{eq: classical satake isomorphism} gives an isomorphism
\begin{equation}\label{eq: localized Satake}
\mrm{S}_q^{\ur} \otimes_{\Cal{O}} \Lambda \xrightarrow{\sim}  H_q(\Lambda) .
\end{equation}

Combining \eqref{eq: localized Satake} with \eqref{eq: dual homotopy groups} and \eqref{eq: localized DHA}, we have constructed an isomorphism of graded rings
\begin{equation}\label{eq: local identifications}
\pi_*(\Cal{S}_q^{\Hk} \dotimes_{\Cal{O}} \Lambda)^{\vee}  \xrightarrow{\sim} \Cal{H}_q(G(\Q_q);\Lambda)
\end{equation}
extending \eqref{eq: localized Satake} on $\pi_0$. Informally, this says that the ``dual of the spectral Hecke coalgebra is the derived Hecke algebra'' on the level of homotopy groups.

\section{Derived local-global compatibility}\label{sec: derived local-global}

\subsection{The automorphic side}\label{ssec: automorphic side}
We now return to the global situation. Recall that $G$ was a split semisimple group over $\Q$.

\subsubsection{Cohomology of locally symmetric spaces} 
Let $Y(K)$ be the locally symmetric space associated with level structure $K$. Let $\TT_K$, $\chi$, and $\mf{m}$ be as in \S \ref{sssec: hecke localization}. Hence $\chi \co \TT_K \rightarrow E$ for some finite extension $E/\Z$. We pick a prime $p$ and a place $\mf{p}$ of $E$ above $p$, on which we'll shortly impose various conditions.

 \emph{We now assume that $\chi$ is a tempered Hecke eigensystem.} Then $H^*(Y(K); \CC)_{\mf{m}}$ is supported degrees $[j_0, j_0+\delta]$ where $\delta = \rank G(\R) - \rank K_{\infty}$, and $2j_0 + \delta = \dim Y(K)$, as established in \cite[III \S 5.1, VII Theorem 6.1]{BW00}, \cite[5.5]{B81}. We impose the assumptions of \cite[\S 13.1]{GV}, and pick a prime $p$ such that 
\begin{enumerate}
\item $H^*(Y(K); \Z)$ is $p$-torsion free.
\item $p> \# W$, where $W$ is the Weyl group of $G$. 
\item $\Cal{O} := E_{\mf{p}}$ is unramified over $\Z_p$.
\item (``no congruences) The map $(\TT_K)_{\mf{m}} \rightarrow \Cal{O}_{\mf{p}}$ induced by completing $\chi$ is an isomorphism. 
\item $H_*(Y(K); \Cal{O})_{\mf{m}}$ vanishes outside $[j_0, j_0+\delta]$. 
\end{enumerate}
(These assumptions should all be satisfied for all sufficiently large $p$.)

\subsubsection{Global derived Hecke algebra}\label{sssec: global DHA} For any open compact subgroup $U_q \subset G_q$, the local derived Hecke algebra $\Cal{H}(G(\Q_q),U_q; \Cal{O}/p^n\Cal{O})$ acts on the cohomology of a locally symmetric space with level structure at $q$ corresponding to $U_q$ (see \cite[\S 2.6]{V}).

We consider the action of the local derived Hecke algebra $\Cal{H}(G_q, U_q; \Cal{O}/p^n\Cal{O})$ for all $q \equiv 1 \pmod{p^n}$ such that $K$ is hyperspecial at $q$, and take $U_q$ to be a hyperspecial maximal compact subgroup. These actions generate an algebra
\[
\wt{\TT}_{K,n} \subset \End(H^*(Y(K); \Cal{O}/p^n\Cal{O})).
\]
Venkatesh defines the \emph{global derived Hecke algebra} to be the subalgebra $\wt{\TT}_K \subset \End(H^*(Y(K); \Cal{O}))$ consisting of endomorphisms of the form $\varprojlim t_n$ for $t_n \in \wt{\TT}_{K,n}$ \cite[\S 2.13]{V}. Note that endomorphisms  do not come from any particular local derived Hecke algebra, but are glued from such in a trancendental way.

\subsection{The Galois side}\label{ssec: galois side}

\subsubsection{Global derived deformation ring} 

Let $k = \Cal{O}/\mf{p}$ be the residue field of $\Cal{O}$, and let $S$ be a finite set of primes containing $p$ and the places at which $K$ is not hyperspecial. We sometimes identify $S$ with an integer which is the product of the primes it contains. Conjecturally, there should exist a global Galois representation 
\[
\ol{\rho} \co \Gal(\ol{\Q}/\Q) \rightarrow G(k)
\]
corresponding to $\mf{m}$, which enjoys the properties listed in \cite[Conjecture 6.1]{GV} and \cite[\S 13.1(8)]{GV}. We assume the existence of such a $\ol{\rho}$, which furthermore satisfies the assumptions of \cite[\S 10]{GV}. In particular, 
\begin{enumerate}
\item $\ol{\rho}$ is unramified outside $S$ and odd at $\infty$.
\item The residual representation into $\wh{G}(k)$ has ``big image''.
\item $\ol{\rho}$ is Fontaine-Laffaille above $p$, and has trivial deformation theory at the other primes in $S$. 
\item $\ol{\rho}$ enjoys local-global compatibility. 
\item $\ol{\rho}$ admits a lift
\[
\rho_{\Cal{O}} \co  \Gal(\ol{\Q}/\Q) \rightarrow \wh{G}(\Cal{O}).
\] 
\end{enumerate}

Let $\Cal{R}_S$ be the derived Galois deformation ring for $\rho$ from \S \ref{ssec: global derived def ring}. 

\subsubsection{Compatibility with the global derived Hecke algebra} 
We now discuss the relationship between $\Cal{R}_S$ and $(\wt{\TT}_K)_{\mf{m}}$. The traditional Taylor-Wiles method aims to prove an ``$R=\TT$'' theorem of the form $\mrm{R}_S \xrightarrow{\sim} (\TT_K)_{\mf{m}}$. However the derived versions $\Cal{R}_S$ and $(\wt{\TT}_K)_{\mf{m}}$ are not even the same type of object, the former being homologically graded and the latter being cohomologically graded. In contrast to the global derived Hecke algebra, which naturally acts by degree-increasing endomorphisms on the \emph{cohomology} $H^*(Y(K); \Z_p)$, $\pi_*( \Cal{R}_S )$ naturally acts by degree-increasing endomorphisms on the \emph{homology} $H_*(Y(K); \Z_p)$. 

To state the comparison between $\Cal{R}_S$ and $(\wt{\TT}_K)_{\mf{m}}$, we use the cap product (and the assumptions we are imposing, which force $H^*(Y(K);\Z_p)$ to be torsion-free), the derived Hecke algebra also acts in a degree-\emph{decreasing} manner on $H_*(Y(K);\Z_p)$.

\begin{defn}
For a module $M$ over $\Lambda = \Cal{O}$ or $\Cal{O}/p^m \Cal{O}$, we let $M^* := \Hom_{\Lambda}(M, \Lambda)$. (We will only apply this to free modules over $\Lambda$.) 

We define $\msf{V} = H_f^1(\Z[1/S]; \Ad^* \rho_{\Cal{O}}(1))^{*}$; this is a free module over $\Cal{O}$ of rank $\delta$ by \cite[Lemma 8.8]{V}. We denote $\msf{V}^* := \Hom_{\Cal{O}}(V, \Cal{O})$. (More generally, for a finite free module $M$ over a coefficient ring $\Lambda$ we will denote $M^* := \Hom_{\Lambda}(M, \Lambda)$.)
\end{defn}

It is shown in \cite[Theorem 8.5]{V} that, under our assumptions, the action of the local derived Hecke algebra on $H^*(Y(K); \Z_p)$ can be ``patched'' in the sense of Taylor-Wiles to an action of $\msf{V}$ on $H^*(Y(K); \Cal{O})_{\mf{m}}$. This induces an identification $\msf{V} \xrightarrow{\sim} \wt{\TT}_{\mf{m}}^1$ (the degree 1 part of the global derived Hecke algebra completed at $\mf{m}$), and \cite[Theorem 8.5]{V} shows moreover that $\msf{V}$ freely generates an exterior algebra in $ \End(H^*(Y(K); \Cal{O})_{\mf{m}})$, which coincides with $\wt{\TT}_{\mf{m}}$. In particular, we get an isomorphism 
\[
 \wedge^* \msf{V} \xrightarrow{\sim} \wt{\TT}_{\mf{m}} .
\]

On the other hand, \cite[\S 15]{GV} constructs an isomorphism (under our running assumptions)
\begin{equation}\label{eq: identify homotopy groups}
\pi_*( \Cal{R}_S ) \xrightarrow{\sim} \wedge^* (\msf{V}^*)
\end{equation}
and \cite[Theorem 14.1]{GV} constructs a natural action of $\pi_*( \Cal{R}_S )$ on $H_{j_0+*}(Y(K); \Cal{O})_{\mf{m}}$, realizing the latter as a free module of rank one over $\pi_*(\Cal{R}_S)\cong \wedge^* (\msf{V}^*)$. 

It is also established in \cite[Theorem 15.2]{GV} that these two actions are compatible in the natural way \cite[\S 15.2]{GV}. To articulate this precisely, we frame it more abstractly. Suppose $\msf{V}$ is a finite free $\Lambda$-module and $\msf{V}^*$ is its $\Lambda$-linear dual. If $M$ is a finite free $\Lambda$-module with actions of $\wedge^{*} \msf{V}$ and $\wedge^{*} \msf{V}^*$, we say that the two actions are \emph{compatible} if for all $v^* \in \msf{V}^*$ and $v \in \msf{V}$ and $m\in M$ we have 
\[
v \cdot v^* \cdot m + v^* \cdot v \cdot m  = \langle v, v^* \rangle \cdot m.
\]

\subsubsection{Hurewicz map}\label{sssec: pi_1 isom}

Let us describe the map $\pi_1(\Cal{R}_S) \xrightarrow{\sim} \msf{V}^*$ from \eqref{eq: identify homotopy groups}. It comes from a ``Hurewicz-like'' construction. 

Let $\Cal{R}$ be a simplicial commutative ring, and suppose a map $\epsilon \co \Cal{R} \rightarrow \Lambda$ is given. For an augmented simplicial commutative ring $A$ over $\Lambda$, define $\Lift_{\epsilon}(\Cal{R},A)$ to be the group of homotopy classes of lifts $\Cal{R} \rightarrow A$ lying over the given map $\epsilon$. 

For any discrete $\Lambda$-module $M$, we can take $A$ to be the square-zero extension $\Lambda \oplus M[i]$. (We remind the reader what this is: first, $\Lambda[i]$ is the free simplicial $\Lambda$-module on the simplicial set $S^i = \Delta^i/\partial \Delta^i$. Tensoring with $M$ gives a simplicial $\Lambda$-module $M[i]$, and then the simplicial $\Lambda$-algebra $\Lambda \oplus M[i]$ is obtained by forming the square-zero extension level-wise.) There is a bilinear pairing 
\begin{equation}\label{eq: homotopy to cotangent}
\pi_i (\Cal{R}) \times \Lift_{\epsilon}(\Cal{R}, \Lambda \oplus M[i]) \rightarrow M
\end{equation}
defined as follows: any $u \in \Lift_{\epsilon}(\Cal{R}, \Lambda \oplus M[i])$ induces 
\[
\pi_*(u) \co \pi_* (\Cal{R})  \rightarrow \pi_* (\Lambda \oplus M[i]) = \Lambda \oplus M[i]
\]
and \eqref{eq: homotopy to cotangent} takes $(x \in \pi_i (\Cal{R}), u)$ to $\pi_*(u)(x) \in M$. Note that $\Lift_{\epsilon}(\Cal{R}, \Lambda 	\oplus M[i])$ coincides with the Andr\'{e}-Quillen homology group $D_i^{\Z}(\Cal{R}; M)$, where $M$ is made an $\Cal{R}$-module via $\epsilon$. 

Let $\Lambda^{\vee}$ be the Pontrjagin dual to $\Lambda$. For $\Lambda = \Cal{O}$, this can be canonically identified with $\Frac(\Cal{O})/\Cal{O}$. The Galois representation $\rho_{\Cal{O}}$ induces a map $\rho \co \pi_0 (\Cal{R}_S) \rightarrow \Cal{O}$ since $\pi_0 (\Cal{R}_S)  = \mrm{R}_S$ is the usual Galois deformation ring. Taking $\Lambda = \Cal{O}$, \cite[Lemma 15.1]{GV} identifies $\Lift_{\rho}(\Cal{R}_S, \Lambda \oplus \Lambda[1])$ with $ H_f^2(\Z[1/S]; \Ad \rho_{\Cal{O}}) \cong \msf{V}^*$. Hence we get a map
\begin{equation}\label{eq: initial part} 
\pi_1 (\Cal{R}_S) \rightarrow H_f^2(\Z[1/S]; \Ad \rho_{\Cal{O}} \otimes \Lambda^{\vee})^*.
\end{equation}

Finally, composing \eqref{eq: initial part} with the identification of Poitou-Tate duality
\[
H_f^2(\Z[1/S]; \Ad \rho_{\Cal{O}} \otimes \Lambda^{\vee})^* \cong H_f^1(\Z[1/S]; \Ad \rho_{\Cal{O}}(1)) = \msf{V}^*
\]
gives the desired map $\pi_1(\Cal{R}_S) \rightarrow \msf{V}^*$; it is shown in \cite[Lemma 15.3]{GV} that this is an isomorphism. 

If we take $\Lambda = \Cal{O}/p^m\Cal{O}$ for some $m \geq 1$, then the representation $\rho_{\Lambda}$ obtained by reducing $\rho_{\Cal{O}}$ into $\wh{G}(\Lambda)$ induces a map $\Cal{R}_S \rightarrow \Lambda$. For the same reason as before, we obtain a map 
\[
\pi_1(\Cal{R}_S \dotimes \Lambda) \rightarrow H_f^2(\Z[1/S]; \Ad \rho_{\Lambda} \otimes \Lambda^{\vee})^* \cong \msf{V}^* \otimes_{\Cal{O}} \Lambda
\]
which is an isomorphism, by the case $\Lambda = \Cal{O}$ and our torsion-freeness assumptions.

\subsection{Formulation of derived local-global compatibility}

We now formulate a derived local-global compatibility statement which is analogous to \S \ref{ssec: GL local-global}. 
\begin{itemize}
\item The \emph{global automorphic object} is $\wt{\TT}_{\mf{m}}  \cong \wedge^* \msf{V}$, where this identification is the one from \cite[Proposition 8.6]{V}. 
\item The \emph{global spectral object} is $\pi_* (\Cal{R}_S) \cong \wedge^* \msf{V}^*$, where this identification is the one from \cite[Proposition 15.4]{GV}. Letting $\pi_* (\Cal{R}_S)^*$ be the $\Cal{O}$-dual of $\pi_* (\Cal{R}_S)$,  combining these identifications gives 
\begin{equation}\label{eq: global identifications}
\pi_* (\Cal{R}_S)^* \xrightarrow{\sim} \wt{\TT}_{\mf{m}}.
\end{equation}
\end{itemize}

\begin{remark}
The eventual local-global compatibility assertion in Theorem \ref{thm: compatibilty} does not depend on these choices. 
\end{remark}

The local actions that we want to compare are: 
\begin{itemize}
\item (Automorphic) The action of a local derived Hecke algebra $\Cal{H}_q(G(\Q_q); \Lambda)$ on $\wt{\TT}_{\mf{m}} \otimes_{\Cal{O}} \Lambda$ through the algebra homomorphism $\Cal{H}_q(G(\Q_q); \Lambda) \rightarrow \wt{\TT}_{\mf{m}} \otimes_{\Cal{O}} \Lambda$.
\item (Galois) The co-action of $\pi_*(\Cal{S}_q^{\Hk} \dotimes_{\Cal{O}} \Lambda)$ on $\pi_* (\Cal{R}_{S} \dotimes_{\Cal{O}} \Lambda) $, where the maps  coming from the co-action of $\Cal{S}_q^{\Hk}$ on $\Cal{R}_{S}$ and the fact that $\pi_*(\Cal{S}_q^{\Hk} \dotimes_{\Cal{O}} \Lambda)$ is free over $\mrm{S}_q^{\ur}\otimes_{\Cal{O}} \Lambda$ (so that the co-action descends to homotopy groups).
\end{itemize}

To state the comparison, it is convenient to dualize the co-action on the spectral side. By \eqref{eq: SHA = group homology} we have that
\begin{align*}
\Cal{S}_q^{\Hk} \dotimes_{\mrm{S}_q^{\ur}} \Cal{R}_{S} & \xrightarrow{\sim}  (
\Cal{O} \dotimes_{\Cal{O}[T_q]} \Cal{O} \dotimes_{\Cal{O}} \mrm{S}_q^{\ur}) \dotimes_{\mrm{S}_q^{\ur}} \Cal{R}_{S} \\
& \xrightarrow{\sim} (\Cal{O} \dotimes_{\Cal{O}[T_q]} \Cal{O}) \dotimes_{\Cal{O}} \Cal{R}_S.
\end{align*}


If $q \equiv 1 \in \Lambda$, then Example \ref{ex: degenerate edge map} applies above with $R =\Lambda$, $A = \Lambda\dotimes_{\Lambda[T_q]} \Lambda$, and $B = \Cal{R}_S \dotimes_{\Cal{O}} \Lambda$, giving 
\begin{align*}
\pi_*(\Cal{R}_S \dotimes_{\Cal{O}} \Lambda)  \xrightarrow{\mrm{co-act}} & \pi_* (\Cal{S}_q^{\Hk} \dotimes_{\mrm{S}_q^{\ur}} \Cal{R}_{S}  \dotimes_{\Cal{O}} \Lambda)  \\
 \cong & \pi_* (\Lambda \dotimes_{\Lambda[T_q]} \Lambda) \otimes_{\Lambda} \pi_*(\Cal{R}_S \dotimes_{\Cal{O}} \Lambda) .
\end{align*}
Dualizing over $\Lambda$, we then get an action 
\[
H^1(T_q; \Lambda) \otimes_{\Cal{O}} \pi_*(\Cal{R}_S)^* \rightarrow \pi_*(\Cal{R}_S)^*,
\]
where $\pi_*(\Cal{R}_S \dotimes_{\Cal{O}} \Lambda)^* = \Hom_{\Cal{O}}(\pi_*(\Cal{R}_S), \Lambda)$. To present this more symmetrically to the derived Hecke algebra, we use \eqref{cor: homotopy groups of SHA} to write 
\[
H^1(T_q; \Lambda) \otimes_{\Lambda} \pi_*(\Cal{R}_S \dotimes_{\Cal{O}} \Lambda)^* = \pi_* (\Cal{S}_q^{\Hk} \dotimes_{\Cal{O}} \Lambda)^*  \otimes_{\mrm{S}_q^{\ur} \otimes_{\Cal{O}} \Lambda}\pi_*(\Cal{R}_{S}\dotimes_{\Cal{O}} \Lambda)^*,
\]
where the homomorphism $\mrm{S}_q^{\ur} \rightarrow \pi_*(\Cal{R}_{S})^*$ corresponds to the character $\chi$. This is rather artificial of course: the usual (underived) local-global compatibility already intertwines the action on $\pi_0 (\Cal{S}_q^{\Hk} \dotimes_{\Cal{O}} \Lambda)^* = \mrm{S}_q^{\ur} \otimes_{\Cal{O}} \Lambda$ and $H_q(\Lambda)$ through the (underived) Satake isomorphism \eqref{eq: localized Satake}. Anyway, the upshot is that we dualize the co-action to an action 
\begin{equation}\label{eq: galois action}
\pi_* (\Cal{S}_q^{\Hk} \dotimes_{\Cal{O}} \Lambda)^* \otimes_{\mrm{S}_q^{\ur}\otimes_{\Cal{O}} \Lambda} \pi_*(\Cal{R}_{S} \dotimes_{\Cal{O}} \Lambda)^*  \xrightarrow{\mrm{act}} \pi_*(\Cal{R}_S \dotimes_{\Cal{O}} \Lambda).
\end{equation}

Abbreviate $\Cal{H}_q(\Lambda) := \Cal{H}(G(\Q_q), \Lambda) $. We will compare \eqref{eq: galois action} to the derived Hecke action 
\begin{equation}\label{eq: hecke action}
\Cal{H}_q(\Lambda)^* \otimes_{H_q(\Lambda)} (\wt{\TT}_{\mf{m}} \otimes_{\Cal{O}} \Lambda) \xrightarrow{\mrm{act}} (\wt{\TT}_{\mf{m}} \otimes_{\Cal{O}} \Lambda)
\end{equation}
where $H_q(\Lambda) \cong \Lambda[X_*(T)]_{\mf{m}}$ is as in \S \ref{sssec: hecke localization}.

\begin{thm}\label{thm: compatibilty}
Under the identifications \eqref{eq: global identifications} and \eqref{eq: local identifications}, the two actions \eqref{eq: hecke action} and \eqref{eq: galois action} coincide for all $q$ congruent to 1 modulo a sufficient large (depending on $\Lambda)$ power of $p$.\footnote{As we shall see in the proof, the restriction to sufficiently large $q$ comes from our use of Venkatesh's Reciprocity Law \cite[Theorem 8.5]{V}. We expect that \cite[Theorem 8.5]{V} in fact holds for all Taylor-Wiles primes; if this were the case, then our proof would apply for all Taylor-Wiles primes as well.} In other words, for all $q$ congruent to 1 modulo a sufficient large (depending on $\Lambda)$ power of $p$, the following diagram commutes (compare \S \ref{ssec: GL local-global}):
\[
\begin{tikzcd}[column sep = tiny, row sep = tiny]
&  [-25pt] &  \pi_* (\Cal{S}_q^{\Hk} \dotimes_{\Cal{O}} \Lambda)^* \ar[rrrr, "\sim"', "{\eqref{eq: local identifications}}"] & & & & \Cal{H}_q(\Lambda)  \\
& \eqref{eq: galois action} & \acts & & & & \acts & \eqref{eq: hecke action}\\
& & \pi_*(\Cal{R}_S \dotimes_{\Cal{O}} \Lambda )^* \ar[rrrr, "\sim"', "{\eqref{eq: global identifications}}"] & & & &  (\wt{\TT}_{\mf{m}} \otimes_{\Cal{O}} \Lambda)
\end{tikzcd}
\]
\end{thm}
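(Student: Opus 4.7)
The proof reduces to a degree-$1$ statement, which I then match against Venkatesh's Reciprocity Law. Both actions under comparison are $\mrm{S}_q^{\ur} \otimes_{\Cal{O}} \Lambda$-linear (the Hecke side through the classical Satake isomorphism \eqref{eq: localized Satake}, the spectral side by construction over $\mrm{S}_q^{\ur}$ in \S\ref{ssec: action on deformation ring}), and they are compatible with the wedge-algebra structures on $\wt{\TT}_{\mf{m}} \otimes_{\Cal{O}} \Lambda \cong \wedge^* \msf{V} \otimes_{\Cal{O}} \Lambda$ in such a way that a degree-$1$ element acts by left-multiplication by some element of $\msf{V} \otimes_{\Cal{O}} \Lambda$ --- on the Hecke side this is automatic from the construction of $\wt{\TT}_{\mf{m}}$ as a subalgebra of $\End$; on the spectral side it follows by dualizing the fact that the co-action $\Cal{R}_S \to \Cal{R}_S \dotimes_{\mrm{S}_q^{\ur}} \Cal{S}_q^{\Hk}$ is a ring map, combined with the $\wt{\TT}$/$\pi_*(\Cal{R}_S)$ duality of \cite[Theorem 15.2]{GV}. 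Since $\Cal{H}_q(\Lambda)$ is generated as an $\mrm{S}_q^{\ur} \otimes_{\Cal{O}} \Lambda$-algebra by its degree-$1$ part $H^1(T_q; \Lambda)$, it then suffices to show that the induced linear maps $\phi_{\mrm{Hk}}, \phi_{\mrm{spec}} \co H^1(T_q; \Lambda) \to \msf{V} \otimes_{\Cal{O}} \Lambda$ coincide.

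I expect both $\phi_{\mrm{Hk}}$ and $\phi_{\mrm{spec}}$ to equal the Galois-theoretic composition
\[
\Phi_q \co H^1(T_q; \Lambda) \xrightarrow{\sim} \frac{H^1(\Q_q; \Ad \rho_\Lambda)}{H^1(\Z_q; \Ad \rho_\Lambda)} \xrightarrow{\text{dual of local restriction}} H^1_f(\Z[1/S]; \Ad^* \rho_\Lambda(1))^* = \msf{V} \otimes_{\Cal{O}} \Lambda,
\]
in which the first isomorphism is the one furnished by \S\ref{sssec: t_1(Fib)} and the second is Poitou-Tate duality applied to local restriction at $q$. On the automorphic side, the identity $\phi_{\mrm{Hk}} = \Phi_q$ is the content of Venkatesh's Reciprocity Law \cite[Theorem 8.5]{V}: the Taylor-Wiles patching calculation carried out there shows that the degree-$1$ Hecke operator corresponding to $x \in H^1(T_q; \Lambda)$ acts on $1 \in \wt{\TT}_{\mf{m}}^0 \otimes_{\Cal{O}} \Lambda$ by left-multiplication by precisely $\Phi_q(x)$. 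The restriction on $q$ (namely $q$ congruent to $1$ modulo a sufficiently large power of $p$) is inherited directly from the hypothesis of \cite[Theorem 8.5]{V}, which is the sole source of this restriction in the statement.

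On the spectral side, I would verify the identity $\phi_{\mrm{spec}} = \Phi_q$ by unwinding the degree-$1$ part of the dualized co-action through the Hurewicz-type identification of \S\ref{sssec: pi_1 isom}, which rewrites $\pi_1(\Cal{R}_S \dotimes_{\Cal{O}} \Lambda)^*$ as $H^1_f(\Z[1/S]; \Ad^* \rho_\Lambda(1))^*$ via Andr\'e-Quillen theory and Poitou-Tate duality. The co-action itself arises from the cartesian presentation \eqref{eq: adding ramification 1} together with the coproduct \eqref{eq: comult} on $\Cal{S}_q^{\Hk}$; passing to tangent complexes and applying the computation $\mf{t}_1(\Fib_{q,\Lambda}) \cong H^1(T_q; \Lambda)^*$ of \S\ref{sssec: t_1(Fib)}, one should identify the degree-$1$ co-action (before dualizing) with the ``ramification'' inclusion at $q$. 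The main technical difficulty lies precisely here: one must show, by carefully tracking the Pontrjagin duality underlying the Hurewicz pairing \eqref{eq: homotopy to cotangent}, the Poitou-Tate duality on global Selmer cohomology, and the $\mrm{S}_q^{\ur}$-linear duality on $\pi_*(\Cal{S}_q^{\Hk} \dotimes_{\Cal{O}} \Lambda)$, that the functorial maps between Andr\'e-Quillen cohomology groups induced by \eqref{eq: adding ramification 1} translate through these dualities to the local restriction map in Galois cohomology. This should be organized around the naturality of the tangent complex under the fibration sequence of \S\ref{sssec: fibration sequence} and the standard commutativity of Poitou-Tate with local restriction; granting this compatibility, the theorem follows immediately by matching explicit formulae with \cite[Theorem 8.5]{V}.
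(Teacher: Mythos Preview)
Your degree-$1$ argument is essentially the one in the paper: both $\phi_{\mrm{Hk}}$ and $\phi_{\mrm{spec}}$ are identified with the Galois-theoretic map $\Phi_q$, the first via \cite[Theorem 8.5]{V} and the second by chasing tangent complexes through the fibration sequence of \S\ref{sssec: fibration sequence}. That part is fine, modulo carrying out the diagram chase you describe (which the paper does in \S\ref{ssec: compatible deg 1}).

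The gap is your reduction step. You assert that $\Cal{H}_q(\Lambda)$ is generated over $\mrm{S}_q^{\ur} \otimes_{\Cal{O}} \Lambda$ by $H^1(T_q;\Lambda)$, but this is false. Here $T_q = T(\F_q)_{(p)}$ is a finite abelian $p$-group, so $H^*(T_q;\Lambda)$ is not an exterior algebra on degree-$1$ classes: for each cyclic factor there is an indecomposable degree-$2$ class (the Bockstein of a degree-$1$ generator), and the cup-square of a degree-$1$ class vanishes. Thus $\Cal{H}_q(\Lambda)$ is only generated in degrees $1$ \emph{and} $2$, and a separate argument in degree $2$ is required.

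The paper handles degree $2$ by splitting $H^2(T_q;\Lambda) \cong \wedge^2 H^1(T_q;\Lambda) \oplus H^2(T_q;\Lambda)_{\mrm{ind}}$ and showing that the indecomposable piece acts by zero on \emph{both} sides, via genuinely new inputs: on the automorphic side, one uses that $H^2(T_q;\Lambda)_{\mrm{ind}} = \beta(H^1(T_q;\Lambda))$ and the torsion-freeness of $H^*(Y(K);\Cal{O})_{\mf{m}}$ to kill the relevant Bockstein terms; on the Galois side, one factors the map through $D^2_{\Z}(\Cal{R}_S;\Lambda)^* \cong H^3_f(\Z[1/S];\Ad\rho_\Lambda)^*$, which vanishes by global duality and the big-image assumption. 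Neither of these ingredients appears in your proposal, and your argument cannot be completed without them.
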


\begin{remark}
In defining $\Cal{S}_q^{\ur}$, we made an auxiliary choice of an element $\ol{\rho}^{\wh{T}}(\Frob_q) \in \wh{T}(k)$. Since the derived Hecke algebra and its action do not depend such an auxiliary choice, Theorem \ref{thm: compatibilty} shows that $\Cal{S}_q^{\ur}$ and its action are similarly independent of this choice. 
\end{remark}

We now make some initial reductions for the proof of Theorem \ref{thm: compatibilty}.

\subsubsection{Reduction to degrees $1$ and $2$}

Since \eqref{eq: localized DHA} and Theorem \eqref{eq: local identifications} imply that $\Cal{H}_q$ and $\pi_* (\Cal{S}_q^{\Hk} \dotimes_{\Cal{O}} \Lambda)^* $ are generated in degrees $1$ and $2$ over their degree $0$ subrings, it suffices to check that Theorem \ref{thm: compatibilty} is correct in degrees 1 and 2. In other words, we need to check that 
\begin{itemize}
\item The map 
\begin{equation}\label{eq: hecke deg 1}
\Cal{H}_q(\Lambda)^1 \otimes_{H_q(\Lambda)}  (\wt{\TT}_{\mf{m}}^0 \otimes_{\Cal{O}} \Lambda)\rightarrow  (\wt{\TT}_{\mf{m}}^1 \otimes_{\Cal{O}} \Lambda)
\end{equation}
agrees under \eqref{eq: global identifications} with 
\begin{equation}\label{eq: galois deg 1}
\pi_1 (\Cal{S}_q^{\Hk} \dotimes_{\Cal{O}} \Lambda)^* \otimes_{\mrm{S}_q^{\ur}\otimes_{\Cal{O}} \Lambda} \pi_*(\Cal{R}_{S} \dotimes_{\Cal{O}} \Lambda)^*  \xrightarrow{\mrm{act}} \pi_{*+1}(\Cal{R}_S \dotimes_{\Cal{O}} \Lambda)^*.
\end{equation}

\item The map 
\begin{equation}\label{eq: hecke deg 2}
\Cal{H}_q(\Lambda)^2 \otimes_{H_q(\Lambda)}  (\wt{\TT}_{\mf{m}}^0 \otimes_{\Cal{O}} \Lambda) \rightarrow  (\wt{\TT}_{\mf{m}}^2 \otimes_{\Cal{O}} \Lambda)
\end{equation}
agrees under \eqref{eq: global identifications} with 
\begin{equation}
\pi_2 (\Cal{S}_q^{\Hk} \dotimes_{\Cal{O}} \Lambda)^* \otimes_{\mrm{S}_q^{\ur}\otimes_{\Cal{O}} \Lambda} \pi_*(\Cal{R}_{S} \dotimes_{\Cal{O}} \Lambda)^*  \xrightarrow{\mrm{act}} \pi_{*+2}(\Cal{R}_S \dotimes_{\Cal{O}} \Lambda)^*.
\end{equation}
\end{itemize}

\subsubsection{Reduction to the cohomology of the torus}

We already know that ``underived'', i.e. degree 0, part of $H_q(\Lambda) \subset \Cal{H}_q(\Lambda)$ acts on $(\wt{\TT}_{\mf{m}} \otimes_{\Cal{O}} \Lambda)$ through the character $\chi_{\Lambda}$, and that $\pi_0(\Cal{S}_q^{\Hk} \dotimes_{\Cal{O}} \Lambda) =  \mrm{S}_q^{\ur} \otimes_{\Cal{O}} \Lambda$ also acts through $\chi_{\Lambda}$, and that the two actions are intertwined by \eqref{eq: localized Satake}.

Also, \eqref{eq: localized DHA} shows that the degree-$i$ part $\Cal{H}_q(\Lambda)^i$ is generated over $H_q(\Lambda)$ by $H^i(T_q; \Lambda)$. Similarly, \eqref{eq: dual homotopy groups} shows that $\pi_i(\Cal{S}_q^{\Hk} \dotimes_{\Cal{O}} \Lambda)^*$ is generated over $\mrm{S}_q^{\ur}$ by $H^i(T_q; \Lambda)$. 

Hence it suffices to show that 
\begin{itemize}
\item The map 
\[
H^1(T_q; \Lambda) \otimes_{\Lambda}  (\wt{\TT}_{\mf{m}}^0 \otimes_{\Cal{O}} \Lambda) \rightarrow  (\wt{\TT}_{\mf{m}}^1 \otimes_{\Cal{O}} \Lambda)
\]
agrees under \eqref{eq: global identifications} with 
\[
H^1(T_q; \Lambda) \otimes_{\Lambda}  \pi_*(\Cal{R}_{S} \dotimes_{\Cal{O}} \Lambda)^*  \xrightarrow{\mrm{act}} \pi_{*+1}(\Cal{R}_S \dotimes_{\Cal{O}} \Lambda)^*.
\]
\item The map 
\[
H^2(T_q; \Lambda) \otimes_{\Lambda}   (\wt{\TT}_{\mf{m}}^0 \otimes_{\Cal{O}} \Lambda)\rightarrow  (\wt{\TT}_{\mf{m}}^2 \otimes_{\Cal{O}} \Lambda)
\]
agrees with 
\[
H^2(T_q; \Lambda) \otimes_{\Lambda}  \pi_*(\Cal{R}_{S} \dotimes_{\Cal{O}} \Lambda)^*  \xrightarrow{\mrm{act}} \pi_{*+2}(\Cal{R}_S \dotimes_{\Cal{O}} \Lambda)^*.
\]
\end{itemize}

\subsubsection{Reduction to the action on the cyclic vector}
We claim that it suffices to check that the actions agree on the given cyclic vector in $H^{j_0}(Y(K); \Lambda)_{\mf{m}}$. Indeed, the action of the local derived Hecke algebras $\Cal{H}_q(\Lambda)$, as $q$ varies over Taylor-Wiles primes, generates all of $H^{j_0+*}(Y(K); \Lambda)_{\mf{m}}$ by \cite[Theorem 8.5]{V}. Hence the same holds for the action of $\pi_* (\Cal{S}_q^{\Hk} \dotimes_{\Cal{O}} \Lambda)^*$ once we verify that the two actions agree on the cyclic vector. Furthermore, Theorem \ref{thm: derived sat isom} and \eqref{eq: localized Satake} show that $\Cal{H}_q(\Lambda)$ actions commute with each other, and similarly for $(\Cal{S}_q^{\Hk} \dotimes_{\Cal{O}} \Lambda)^*$.

In conclusion, to prove Theorem \ref{thm: compatibilty} we ``only'' need to check that: 
\begin{align}\label{eq: dual deg 1}
& \scalebox{0.9}{$H^1(T_q; \Lambda) \rightarrow \Cal{H}_q(\Lambda)^1 \rightarrow  (\wt{\TT}_{\mf{m}}^1 \otimes_{\Cal{O}} \Lambda) \xrightarrow{\text{\cite[Proposition 8.6]{V}}} \msf{V} \otimes_{\Cal{O}} \Lambda$}  \quad \text{ is dual to } \\ \nonumber
&\scalebox{0.9}{$ \msf{V}^* \otimes_{\Cal{O}} \Lambda  \xrightarrow{\eqref{eq: identify homotopy groups}} \pi_1 (\Cal{R}_S \dotimes_{\Cal{O}} \Lambda)  \rightarrow  \pi_1( \Cal{S}_q^{\Hk} \dotimes_{\Cal{O}} \Lambda)  \otimes_{\mrm{S}_q^{\ur}} \pi_0(\Cal{R}_S \dotimes_{\Cal{O}} \Lambda) \xrightarrow{\text{Cor. \ref{cor: homotopy groups of SHA}}} H_1(T_q; \Lambda)$},
\end{align}
and that 
\begin{align}\label{eq: dual deg 2}
& \scalebox{0.9}{$H^2(T_q; \Lambda) \rightarrow \Cal{H}_q(\Lambda)^2 \rightarrow (\wt{\TT}_{\mf{m}}^2 \otimes_{\Cal{O}} \Lambda) \xrightarrow{\text{\cite[Proposition 8.6]{V}}} \wedge^2 \msf{V}\otimes_{\Cal{O}} \Lambda $} \quad \text{ is dual to } \\ \nonumber
& \scalebox{0.9}{$ \wedge^2 (\msf{V}^*\otimes_{\Cal{O}} \Lambda) \xrightarrow{\eqref{eq: identify homotopy groups}} \pi_2 (\Cal{R}_S \dotimes_{\Cal{O}} \Lambda) \rightarrow  \pi_2 (\Cal{S}_q^{\Hk} \dotimes_{\Cal{O}} \Lambda)\otimes_{\mrm{S}_q^{\ur}} \pi_0(\Cal{R}_S \dotimes_{\Cal{O}} \Lambda)   \xrightarrow{\text{Cor. \ref{cor: homotopy groups of SHA}}} H_2(T_q; \Lambda)$}.
\end{align}

The proofs of \eqref{eq: dual deg 1} and \eqref{eq: dual deg 2} occupy the rest of the paper.

\subsection{Checking compatibility in degree 1}\label{ssec: compatible deg 1}
We check \eqref{eq: dual deg 1}. This amounts to showing that a certain map $H^1(T_q; \Lambda) \rightarrow \msf{V} \otimes_{\Cal{O}} \Lambda$ to be dual to a certain map $\msf{V}^* \otimes_{\Cal{O}} \Lambda  \rightarrow H_1(T; \Lambda)$, and we will now explicate what these maps are. 

\subsubsection{The automorphic side} We explicate the map $H^1(T_q; \Lambda) \rightarrow \msf{V} \otimes_{\Cal{O}} \Lambda$ from \eqref{eq: dual deg 1}. Recall that in \S \ref{sssec: fibration sequence} we defined a fiber sequence
\[
\Fib_{q,\Lambda} \rightarrow \Cal{F}_{\Z_q} \rightarrow \Cal{F}_{\Q_q}
\]
According to \cite[Theorem 8.5]{V}, there exists $n_0$ depending on $\Lambda$ such that for all $q \equiv 1 \pmod{n_0}$, the map $H^1(T_q;\Lambda) \rightarrow \msf{V} \otimes_{\Cal{O}} \Lambda$ can be described by the following sequence of steps.
\begin{enumerate}
\item The isomorphism $H^1(T_q;\Lambda) = \Hom(T_q; \Lambda) \cong \mf{t}_1(\Fib_{q,\Lambda})$ from \S \ref{sssec: t_1(Fib)}; this came from class field theory (describing tame deformations of a homomorphism into $\wh{T}$). 
\item The isomorphism $\mf{t}_1(\Fib_{q,\Lambda})  \xrightarrow{\sim} H^1(\Q_q; \Ad \rho_{\Lambda}) / H^1(\Z_q; \Ad \rho_{\Lambda}) $ from \eqref{eq: t_1 Fib map}.
\item The pairing
\[
\underbrace{H_f^1(\Z[1/S]; \Ad^* \rho_{\Lambda}(1))}_{(\msf{V} \otimes_{\Cal{O}} \Lambda)^*}  \times \frac{H^1(\Q_q; \Ad \rho_{\Lambda}) }{ H^1(\Z_q; \Ad \rho_{\Lambda}) } \rightarrow \Lambda
\]
given by restricting $H_f^1(\Z[1/S]; \Ad^* \rho_{\Lambda}(1)) \rightarrow H^1(\Q_q; \Ad^* \rho_{\Lambda}(1))$ and then applying Tate local duality. 
\end{enumerate}
This is summarized in the diagram 
\[
\begin{tikzcd}
H^1(T_q; \Lambda) \ar[r, "\text{\S \ref{sssec: t_1(Fib)}}", "\sim"'] & \mf{t}_1(\Fib_{q,\Lambda};\Lambda)  \ar[r, "\eqref{eq: t_1 Fib map}"]  & 
H^1(\Q_q; \Ad \rho_{\Lambda}) / H^1(\Z_q; \Ad  \rho_{\Lambda})  \\
 & \ar[r, "\text{local duality}"']  & 
(H_f^1(\Z[1/S]; \Ad^*\rho_{\Lambda}(1)))^* = \msf{V} \otimes_{\Cal{O}} \Lambda. \\
\end{tikzcd}
\]

\subsubsection{The Galois side} We describe the map $\msf{V} ^* \otimes_{\Cal{O}} \Lambda  \rightarrow H_1(T_q; \Lambda)$ from \eqref{eq: dual deg 2}. It comes from the sequence of steps:
\begin{enumerate}
\item The identification $\msf{V}^* \xrightarrow{\sim} \pi_1( \Cal{R}_{S})$ obtained by inverting \S \ref{sssec: pi_1 isom}.
\item The co-action map 
\[
\pi_1( \Cal{R}_{S}  \dotimes_{\Cal{O}} \Lambda ) \xrightarrow{\pi_1(\mrm{co-act})} \pi_1(\Cal{S}_q^{\Hk} \dotimes_{\mrm{S}_q^{\ur}}  \Cal{R}_{S}  \dotimes_{\Cal{O}} \Lambda  ).
\]
\item The projection map 
\[
\pi_1(\Cal{S}_q^{\Hk} \dotimes_{\mrm{S}_q^{\ur}}  \Cal{R}_{S}   \dotimes_{\Cal{O}} \Lambda  )\xrightarrow{\mrm{project}} \pi_1(\Cal{S}_q^{\Hk}   \dotimes_{\Cal{O}} \Lambda   ) \otimes_{\mrm{S}_q^{\ur} \otimes_{\Cal{O}} \Lambda } \pi_0(\Cal{R}_S \dotimes_{\Cal{O}} \Lambda  ).
\]
\item The identification $\pi_1(\Cal{S}_q^{\Hk}) \otimes_{\mrm{S}_q^{\ur} \otimes_{\Cal{O}} \Lambda} \pi_0(\Cal{R}_S  \dotimes \Lambda ) =  H_1(T_q; \Lambda)$ coming from Corollary \ref{cor: homotopy groups of SHA} and the assumption $\pi_0(\Cal{R}_S)   = \Cal{O}$. 
\end{enumerate}
This is summarized in the diagram 
\[
\begin{tikzcd}[column sep = small]
\msf{V}^* \otimes_{\Cal{O}} \Lambda \ar[r, "{\sim}"] &  \pi_1( \Cal{R}_{S} \dotimes_{\Cal{O}} \Lambda) \ar[r, "\text{co-act}"] & 
\pi_1(\Cal{S}_q^{\Hk} \otimes_{\Cal{S}_q}  \Cal{R}_{S} \dotimes_{\Cal{O}} \Lambda) \\
\ar[r, "\mrm{project}"] &   \pi_1(\Cal{S}_q^{\Hk} \dotimes_{\Cal{O}} \Lambda) \otimes_{\mrm{S}_q^{\ur} \otimes_{\Cal{O}} \Lambda} \pi_0(\Cal{R}_S \dotimes_{\Cal{O}} \Lambda)   \ar[r, "\text{Cor. \ref{cor: homotopy groups of SHA}}", "\sim"'] &  H_1(T_q; \Lambda).
\end{tikzcd}
\]

\subsubsection{Transfer to Andr\'{e}-Quillen cohomology}\label{sssec: homotopy to cotangent}
As discussed in \S \ref{sssec: pi_1 isom}, for any simplicial commutative ring $\Cal{R}$ with an augmentation to $\Lambda$, and a discrete $\Lambda$-module $M$, we have a pairing 
\[
\pi_i (\Cal{R}) \times D_{\Z}^i(\Cal{R}; M) \rightarrow M
\]
which induces a map 
\[
\pi_i(\Cal{R}) \rightarrow D_{\Z}^i(\Cal{R}; M)^* := \Hom_{\Lambda}(D_{\Z}^i(\Cal{R}; M),\Lambda).
\]
This is functorial in $\Cal{R}$, so we get a commutative diagram: 
\begin{equation}\label{eq: big diagram}
\begin{tikzcd}
& \msf{V}^* \otimes_{\Cal{O}} \Lambda \ar[d, equals]   \\
\pi_1(\Cal{R}_S \dotimes_{\Cal{O}} \Lambda ) \ar[d, "\text{co-act}"]  \ar[r] & D^1_{\Z}(\Cal{R}_S  ; \Lambda )^{*}  \ar[d, "\text{co-act}"] \\
\pi_1( \Cal{R}_S   \otimes_{\mrm{S}_q^{\ur}} \Cal{S}_q^{\Hk} \dotimes_{\Cal{O}} \Lambda )   \ar[d, "\mrm{project}"]  \ar[r] & D^1_{\Z}( \Cal{R}_S   \dotimes_{\mrm{S}_q^{\ur}} \Cal{S}_q^{\Hk};\Lambda)^{*}   \ar[d, "\mrm{project}"]  \\
\pi_1(\Cal{S}_q^{\Hk} \dotimes_{\mrm{S}_q^{\ur}} \pi_0\Cal{R}_S \dotimes_{\Cal{O}} \Lambda ) \ar[r] \ar[d, "\sim"', "\eqref{eq: SHA = group homology}"] &   D^1_{\Z}(\Cal{S}_q^{\Hk} \dotimes_{\mrm{S}_q^{\ur}} \pi_0\Cal{R}_S ;\Lambda )^{*}  \ar[d, "\sim"', "\eqref{eq: SHA = group homology}"]  \ar[dr, dashed] \\
\pi_1(\Lambda \dotimes_{\Lambda[T_q]} \Lambda) \ar[r] \ar[d, "\text{Cor. \ref{cor: homotopy groups of SHA}}", "\sim"'] & D_{\Z}^1(\Lambda \dotimes_{\Lambda[T_q]} \Lambda; \Lambda)^* \ar[d, "\sim"'] \ar[r, equals] & \mf{t}_1(\Fib_{q,\Lambda})^* \ar[d, "\sim"', "\S \ref{sssec: t_1(Fib)}"] \\
H_1(T_q; \Lambda)  \ar[r, equals] & H_1(T_q; \Lambda) \ar[r, equals]  & H_1(T_q; \Lambda) 
\end{tikzcd}
\end{equation}
Here: 
\begin{itemize}
\item The reason for commutivity for the second square is that it is actually obtained from a ring homomorphism 
\[
  \Cal{S}_q^{\Hk}  \dotimes_{\mrm{S}_q^{\ur}} \Cal{R}_S  \xrightarrow{\mrm{project}} \Cal{S}_q^{\Hk} \dotimes_{\mrm{S}_q^{\ur}} \pi_0\Cal{R}_S . 
\]
\item We used \eqref{eq: w.e. for Fib_q} to see that $\Lambda \dotimes_{\Lambda[T_q]} \Lambda$ represents $\Fib_{q, \Lambda}$.
\item We need to justify why the bottom left square in \eqref{eq: big diagram} commutes. By Proposition \ref{prop: hurewicz isom for SHA} the map $\pi_1(\Lambda \dotimes_{\Lambda[T_q]} \Lambda) \rightarrow  D_{\Z}^1(\Lambda \dotimes_{\Lambda[T_q]} \Lambda; \Lambda)^* $ is an isomorphism, but we have produced separate identifications of each with $H^1(T_q; \Lambda)$, and it is not entirely obvious that they are compatible. This is checked in \S \ref{ssec: compatibility of identifications}.
\end{itemize}



Upshot: since the bottom row in \eqref{eq: big diagram} is an isomorphism, and the top row is an isomorphism by \cite[Lemma 15.3]{GV}, the map of interest in \eqref{eq: homotopy to cotangent} is the same as the vertical composition along the right column in \eqref{eq: big diagram}.

\subsubsection{Some maps of tangent complexes}\label{sssec: tangent diagram chasing} We will now describe the dashed map in \eqref{eq: big diagram} in terms of a more general framework. 

Let $X,Y,Z$ be functors on artinian SCRs augmented over $\Lambda$, whose value on $\Lambda$ is contractible. We then have the theory of the tangent complex $\mf{t}_*$ for such functors \cite[\S 4 and Proof of Lemma 15.1]{GV}. For an augmented simplicial commutative ring $\Cal{R} \rightarrow \Lambda$, the $\mf{t}_i$ of the functor $\mrm{SCR}_{/\Lambda}(\Cal{R}, -)$ that $\Cal{R}$ represents coincides with the Andr\'{e}-Quillen cohomology $D^i_{\Z}(\Cal{R}; \Lambda)$. So we will also use $\mf{t}_*(\Cal{R})$ to denote $D^i_{\Z}(\Cal{R}; \Lambda) = \mf{t}_*( \mrm{SCR}_{/\Lambda}(\Cal{R}, -))$. 

Suppose we are given maps $X \rightarrow Z$ and $Y \rightarrow Z$. Let $F$ be the homotopy fiber of $Y \rightarrow Z$, i.e. $F = \Spec \Lambda \times^h_Z Y$. Then we have a diagram with all squares homotopy cartesian: 
\[
\begin{tikzcd}
F \ar[r] \ar[d] & Y \times_Z X \ar[r] \ar[d] & Y \ar[d] \\
\Spec \Lambda \ar[r] &  X \ar[r] & Z
\end{tikzcd}
\]
Hence we get a map 
\begin{equation}\label{eq: tangent fiber map}
\mf{t}_*(F) \rightarrow \mf{t}_*(Y \times_Z X).
\end{equation}

To describe this a little more explicitly, recall that the formation of tangent complexes preserves homotopy pullbacks (cf. \S \ref{ssec: tangent complex of Fib}), i.e. 
\begin{equation}\label{eq: tangent hofib}
\mf{t}_*(Y \times_Z X) = \mrm{hofib}(\mf{t}_* Y \oplus \mf{t}_* X \rightarrow	 \mf{t}_* Z).
\end{equation}
With respect to \eqref{eq: tangent hofib}, the map $\mf{t}_*(F) \rightarrow  \mf{t}_* Y \oplus \mf{t}_* X$ induced by \eqref{eq: tangent fiber map} is $0$ in the second coordinate and the tautological map induced by $Y \rightarrow F$ in the first coordinate.



\begin{example}
If we apply this discussion with $Y = \Cal{F}_{\Z_q, \rho_{\Lambda}}^{\wh{T}, \square}$, $Z = \Cal{F}_{\Q_q, \rho_{\Lambda}}^{\wh{T}, \square}$, and $X = \pi_0 \Cal{F}_{\Z[1/S], \rho_{\Lambda}}^{\mrm{crys}}$, then we get a map 
\[
\mf{t}_*(\Fib_q) \rightarrow \mf{t}_*(Y \times_Z X ) \xrightarrow{\sim} \mf{t}_*( (Y \times_Z Y) \times_Y \pi_0 \Cal{F}_{\Z[1/S]}^{\mrm{crys}}).
\]
Dualizing this recovers the map 
\begin{equation}\label{eq: cotangent fiber map}
\mf{t}_1(\Cal{S}_q^{\Hk} \dotimes_{\mrm{S}_q^{\ur}} \pi_0\Cal{R}_S)^* \xrightarrow{\eqref{eq: tangent fiber map}} \mf{t}_1(\Fib_{q,\Lambda})^*,
\end{equation}
which is the dashed arrow in \eqref{eq: big diagram}.
\end{example}

\subsubsection{Where are we?} 
We summarize the discussion with the diagram below. The map $H^1(T_q;\Lambda) \rightarrow \msf{V} \otimes_{\Cal{O}} \Lambda$ obtained by tracing along the right vertical edge of the diagram is the ``automorphic side'' of \eqref{eq: dual deg 1}, while the map $\pi_1(\Cal{R}_S \dotimes_{\Cal{O}} \Lambda) \rightarrow H_1(T_q;\Lambda)$ obtained by tracing along the left is ``Galois side'' of \eqref{eq: dual deg 1}.
\[
\begin{tikzcd}[column sep = small]
 \mf{t}_1(\Cal{R}_S \dotimes_{\Cal{O}} \Lambda )^*  \ar[d, "\text{co-act}"] \ar[r, equals] &  \msf{V}^*\otimes_{\Cal{O}} \Lambda \ar[r, dotted, dash] & H_f^1(\Z[1/S]; \Ad^* \rho_{\Lambda}(1))^* = \msf{V} \otimes_{\Cal{O}} \Lambda \\
 \mf{t}_1( \Cal{R}_S   \dotimes_{\mrm{S}_q^{\ur}} \Cal{S}_q^{\Hk} \dotimes_{\Cal{O}} \Lambda)^*   \ar[d, "\mrm{project}"]   & &  H^1(\Q_q; \Ad \rho_{\Lambda}) / H^1(\Z_q; \Ad \rho_{\Lambda})  \ar[u, "\text{local duality}"'] \\
  \mf{t}_1(\Cal{S}_q^{\Hk } \dotimes_{\mrm{S}_q^{\ur}} \pi_0(\Cal{R}_S) \dotimes_{\Cal{O}} \Lambda)^*   \ar[r, "\eqref{eq: cotangent fiber map}"]  & \mf{t}_1(\Fib_{q,\Lambda})^* \ar[d, "\sim"', "\text{CFT}"] \ar[r, dotted, dash] & \mf{t}_1(\Fib_{q,\Lambda})  \ar[u, "\eqref{eq: t_1 Fib map}"'] \\
 & H^1(T_q;\Lambda) \ar[r, dotted, dash] & H_1(T_q;\Lambda) \ar[u, "\text{CFT}"', "\sim"] 
\end{tikzcd}
\]
The dotted arrows connect spaces that are dual.

\subsubsection{Final steps} So we have reduced the content of the theorem to showing that the natural map 
\[
\mf{t}_1(\Cal{R}_S \dotimes_{\Cal{O}} \Lambda)^* \cong \msf{V}^*\otimes_{\Cal{O}} \Lambda   \rightarrow \mf{t}_1^*(\Fib_{q,\Lambda}),
\]
which ultimately came a general property of the structural setup, is dual to a map $\mf{t}_1(\Fib_{q,\Lambda}) \rightarrow \mf{t}_1(\Cal{R}_S \dotimes_{\Cal{O}} \Lambda)$ given by computing both in terms of Galois cohomology and then writing down a pairing using Tate local duality. 

This second map seems to have a more ``ad hoc'' description, but in the proof of \cite[Lemma 15.3]{GV} another description of it is given. Specifically, it is explained on \cite[p. 125]{GV} that this map pulls back to the map $\beta$ in \cite[eqn. (11.14)]{GV}, which means that it is the specialization of the map $\mf{t}_1(F) \rightarrow \mf{t}_1(X \times_Z Y)$ from \eqref{eq: tangent fiber map} to $X = \Cal{F}_{\Z[1/Sq], \rho_{\Lambda}}^{\crys}$, $Z = \Cal{F}_{\Q_q,  \rho_{\Lambda}}^{\wh{T}, \square}$, and $Y = \Cal{F}_{\Z_q,  \rho_{\Lambda}}^{\wh{T},\square}$. 

This observation reduces us to showing that in the situation of \S \ref{sssec: tangent diagram chasing}, the map $\mf{t}_1(F) \rightarrow \mf{t}_1(Y \times_Z X)$ from \eqref{eq: tangent fiber map} is dual to the one coming from the co-action:
\[
\begin{tikzcd}
\mf{t}_1(Y \times_Z X)^* \ar[r, "{\text{co-act}}"] &  \mf{t}_1((Y \times_Z Y) \times_Y (Y \times_Z X))^*  \\
\ar[r, "\text{project}"] & \mf{t}_1(Y \times_Z Y \times_Z \pi_0 X )^* \ar[r, "{\eqref{eq: tangent fiber map}}"] &  \mf{t}_1 (F)^*
\end{tikzcd}
\]
Here the last map $ \mf{t}_1(Y \times_Z Y \times_Z \pi_0 X )^* \rightarrow \mf{t}_1 (F)^*$ is an instance of \eqref{eq: tangent fiber map} but with the role of $X$ in \eqref{eq: tangent fiber map} played by $Y \times_Z \pi_0 X$. 

In other words, we've reduced to the claim that the following diagram commutes. 
\[
\begin{tikzcd}
& \mf{t}_1((Y \times_Z Y)\times_Y (Y \times_Z X)   ) \ar[dl, "\mrm{co-act}^*"]  \\
\mf{t}_1(Y \times_Z X)   & &  \mf{t}_1(Y \times_Z Y \times_Z \pi_0 X) \ar[ul, "\mrm{project}^*"]  \\
& \mf{t}_1(F) \ar[ur, "\eqref{eq: tangent fiber map}"'] \ar[ul, "{\eqref{eq: tangent fiber map}}"]
\end{tikzcd}
\]
This is verified by a direct inspection, using the explicit description of tangent complex of a fibered product \eqref{eq: tangent hofib}, and that \eqref{eq: tangent fiber map} is given by the ``tautological map into the first factor of $Y$''.

\subsection{Checking compatibility in degree 2} We next need to check \eqref{eq: dual deg 2}. Fortunately for us, this is more degenerate than the degree 1 case. 

The cup product furnishes a map $\wedge^2 H^1(T_q; \Lambda) \rightarrow H^2(T_q; \Lambda)$, and let $H^2(T_q; \Lambda)_{\mrm{ind}}$ be the quotient. The quotient map splits canonically by identifying $H^2(T_q; \Lambda)_{\mrm{ind}}$ as the primitive subspace of $H^2(T_q' \Lambda)$ for the coproduct induced by the group structure on $T_q$, inducing a direct sum decomposition
\[
H^2(T_q; \Lambda) \cong \wedge^2 H^1(T_q; \Lambda) \oplus H^2(T_q; \Lambda)_{\mrm{ind}}.
\]
Similarly we have 
\[
H_2(T_q; \Lambda) \cong \wedge^2 H_1(T_q; \Lambda) \oplus H_2(T_q; \Lambda)_{\mrm{prim}}.
\]
The compatibility in degree 1, which we just checked in \S \ref{ssec: compatible deg 1}, reduces us to checking \eqref{eq: dual deg 2} for the primitive/indecomposable parts: 
\begin{align}\label{eq: primitive dual deg 2}
& \scalebox{0.85}{$H^2(T_q; \Lambda)_{\mrm{ind}} \rightarrow \Cal{H}_q(\Lambda)^2 \rightarrow (\wt{\TT}_{\mf{m}}^2 \otimes_{\Cal{O}} \Lambda) \xrightarrow{\text{\cite[Proposition 8.6]{V}}}  \wedge^2 \msf{V}\otimes_{\Cal{O}} \Lambda$} \quad \text{ is dual to } \\ \nonumber
& \scalebox{0.85}{$\wedge^2 (\msf{V}^*\otimes_{\Cal{O}} \Lambda) \xrightarrow{\eqref{eq: identify homotopy groups}} \pi_2 (\Cal{R}_S \dotimes_{\Cal{O}} \Lambda) \rightarrow  \pi_2 (\Cal{S}_q^{\Hk} \dotimes_{\Cal{O}} \Lambda)\otimes_{\mrm{S}_q^{\ur}} \pi_0(\Cal{R}_S \dotimes_{\Cal{O}} \Lambda)   \xrightarrow{\text{Cor \ref{cor: homotopy groups of SHA}}} H_2(T_q; \Lambda)_{\mrm{prim}}$}.
\end{align}

\subsubsection{The automorphic side} We unravel the map $H^2(T_q; \Lambda)_{\mrm{ind}} \rightarrow \wedge^2 \msf{V} \otimes_{\Cal{O}} \Lambda$ from \eqref{eq: primitive dual deg 2}. In fact we claim that this map is $0$. In other words, we will argue that $H^2(T_q; \Lambda)_{\mrm{prim}}$ acts by $0$ on $H^*(Y(K); \Cal{O})_{\mf{m}}$. 

\begin{remark}
Note that \cite{V} actually ignores the part of the local derived Hecke algebra in degree $\geq 2$, using only $H^1$ to act on $H^*(Y(K); \Cal{O})_{\mf{m}}$. Our computation shows that in fact there is nothing to be gained at looking at the rest of the local derived Hecke algebras: all the non-trivial action comes from $H^1$. 
\end{remark}

Letting $\Lambda = \Cal{O}/p^m$, we have 
\[
H^2(T_q; \Lambda)_{\mrm{ind}} = \beta (H^1(T_q; \Lambda))
\]
where $\beta$ is the Bockstein operator associated to the short exact sequence
\begin{equation}\label{eq: bockstein sequence}
\Cal{O}/p^m \Cal{O} \rightarrow \Cal{O}/p^{2m} \Cal{O}  \rightarrow \Cal{O}/p^m \Cal{O}.
\end{equation}

Therefore our claim amounts to showing that the action of $\beta(a)$ on $H^*(Y(K); \Lambda)_{\mf{m}}$ is trivial for all $a \in H^1(T_q; \Lambda)$. Denote by $Y_0(q)$ the locally symmetric space obtained by adding $\Gamma_0(q)$-level structure to $Y(K)$, and let $\pi \co Y_0(q) \rightarrow Y(K)$ be the projection map. 

As defined above, $a$ and $\beta(a)$ are classes in $H^*(T_q; \Lambda)$. We will also use the notation $a$ and $\beta(a)$ to refer to their image in $\Cal{H}_q(\Lambda)$. We will use the notation $a'$ and $\beta(a')$ for their realization in $ H^1(Y_0(q); \Lambda)$ by pulling back via the map $Y_0(q) \rightarrow B(T_q)$ classifying the Shimura cover $Y_1^*(q) \rightarrow Y_0(q)$ (that is, the subcover of $Y_1(q) \rightarrow Y_0(q)$ with Galois group $T_q$).

The Iwahori Hecke algebra at $q$ with coefficients in $\Lambda$ acts on $H^*(Y_0(q);\Lambda)$. 
Recall that as part of the datum of a Taylor-Wiles prime we have an element $\Frob_q^{\wh{T}} \in \wh{T}(\Lambda)$. By \cite[Lemma 6.6 and the following discussion]{V}, we can view the element $\rho_{\Lambda}^{\wh{T}}(\Frob_q)$ as a character of the monoid algebra $\Lambda[X_*(T)^+]$ (which acts on $H^*(Y_0(q);\Lambda$ by what are usually called ``$U_q$ operators''). Hence the element $\Frob_q^{\wh{T}}$ cuts out a particular eigenspace of $H^*(Y_0(q);\Lambda)$. 

Recall that we have two different projection maps $\pi_1, \pi_2 \co Y_0(q) \rightrightarrows Y(K)$. By \cite[eqn. (144); cf. \S 8.16 and Lemma 8.17]{V}, the action of $\beta(a) \in \Cal{H}_q(\Lambda)^1$ on $H^*(Y(K); \Lambda)$ is given by:
\begin{quote}
Pullback (via $\pi_1$) to $Y_0(q)$, project to $\Frob_q^{\wh{T}}$-eigenspace, cup with $\beta(a')$, and pushdown (via $\pi_2$) to $Y(K)$. 
\end{quote}
In equations, $\beta(a) \in \Cal{H}_q(\Lambda)^1$ sends $y \in H^*(Y(K); \Lambda)_{\mf{m}}$ to 
\[
\pi_{2*} (\beta(a') \smile \Theta \star \pi_1^* (y))
\]
where $\Theta$ is the idempotent projector onto the $\Frob_q^{\wh{T}}$ eigenspace (the notation is chosen to match the $\Theta$ in \cite[Lemma 8.17]{V}). Since the Bockstein $\beta$ is a derivation with respect to the cup product, and commutes with finite pullbacks and pushforwards, we have
\begin{align}\label{eq: bocksteins}
\pi_{2*}( \beta(a') \smile x )& = \pi_{2*}(\beta(a' \smile \Theta \star \pi_1^*(y)) - a' \smile  \beta( \Theta \star \pi_1^* y)) \nonumber \\ 
&  = \pi_{2*}(\beta(a' \smile \Theta \star  \pi_1^*y)) -\pi_{2*}( a' \smile  \Theta \star \pi_1^* \beta( y)) 
\end{align}
Now, the commutative diagram 
\[
\begin{tikzcd}
\Cal{O} \ar[r, "p^m"] \ar[d] & \Cal{O} \ar[r] \ar[d] & \Cal{O}/p^m \Cal{O} \ar[d, equals] \\ 
\Cal{O}/p^m \Cal{O} \ar[r] &  \Cal{O}/p^{2m} \Cal{O}  \ar[r] &  \Cal{O}/p^m \Cal{O}
\end{tikzcd}
\]
shows that for any space $Y$, the Bockstein $\beta$ factors through 
\[
H^*(Y; \Cal{O}/p^m\Cal{O}) \rightarrow H^{*+1}(Y; \Cal{O})[p^m] \xrightarrow{\mrm{reduce}} H^{*+1}(Y; \Cal{O}/p^m\Cal{O}).
\]
Hence the first term $\pi_{2*}(\beta(a' \smile \Theta \star  \pi_1^*y))$ in \eqref{eq: bocksteins} is the reduction of a class in $H^*(Y(K); \Cal{O})[p^m]$, but this must vanish by our torsion-freeness assumption in \S \ref{ssec: automorphic side}. Similarly, the second term $\pi_{2*}( a' \smile  \Theta \star \pi_1^* \beta( y)) $ in \eqref{eq: bocksteins} vanishes because $\beta( y)$ already vanishes. 


\subsubsection{The Galois side} We unravel the map $\wedge^2 \msf{V}^*  \otimes_{\Cal{O}} \Lambda \rightarrow H_2(T_q ; \Lambda)_{\mrm{prim}}$ from \eqref{eq: primitive dual deg 2}. We must show that it is $0$. By definition, it comes from the sequence of steps:

\begin{enumerate}
\item The identification $\wedge^2 \msf{V}^*  \otimes_{\Cal{O}} \Lambda \xrightarrow{\sim} \pi_2( \Cal{R}_{S} \dotimes_{\Cal{O}} \Lambda)$ obtained by inverting \S \ref{sssec: pi_1 isom}.
\item The co-action map 
\[
\pi_2( \Cal{R}_{S} \dotimes_{\Cal{O}} \Lambda ) \xrightarrow{\mrm{co-act}} \pi_2(\Cal{S}_q^{\Hk} \dotimes_{\mrm{S}_q^{\ur}}  \Cal{R}_{S} \dotimes_{\Cal{O}} \Lambda ).
\]
\item The projection map 
\[
\pi_2(\Cal{S}_q^{\Hk} \dotimes_{\mrm{S}_q^{\ur}}  \Cal{R}_{S} \dotimes_{\Cal{O}} \Lambda  )\xrightarrow{\mrm{project}} \pi_2(\Cal{S}_q^{\Hk}\dotimes_{\Cal{O}} \Lambda  ) \otimes_{\mrm{S}_q^{\ur} \otimes_{\Cal{O}} \Lambda } \pi_0(\Cal{R}_S \dotimes_{\Cal{O}} \Lambda ).
\]
\item The identification $\pi_2(\Cal{S}_q^{\Hk}) \otimes_{\mrm{S}_q^{\ur}} \pi_0(\Cal{R}_S)   \otimes \Lambda  =  H_2(T_q; \Lambda)$ coming from \eqref{cor: homotopy groups of SHA} and the assumption $\pi_0(\Cal{R}_S)   = \Cal{O}$. 
\item The projection $H_2(T_q; \Lambda) \rightarrow H_2(T_q;\Lambda)_{\mrm{prim}}$. 
\end{enumerate}
This is summarized in the diagram 
\[
\begin{tikzcd}[column sep = tiny]
\wedge^2 \msf{V}^* \otimes_{\Cal{O}} \Lambda \ar[r, "{\sim}"] &  \pi_2( \Cal{R}_{S} \dotimes_{\Cal{O}} \Lambda) \ar[r, "\text{co-act}"] & 
\pi_2(\Cal{S}_q^{\Hk} \dotimes_{\Cal{S}_q}  \Cal{R}_{S} \dotimes_{\Cal{O}} \Lambda) \\
\ar[r, "\mrm{project}"] &   \pi_2(\Cal{S}_q^{\Hk} \dotimes_{\Cal{O}} \Lambda) \otimes_{\mrm{S}_q^{\ur}} \pi_0(\Cal{R}_S)  \ar[r, "\text{Cor.\ref{cor: homotopy groups of SHA}}", "\sim"' ] &  H_2(T_q; \Lambda) \ar[r] & H_2(T_q; \Lambda)_{\mrm{prim}}.
\end{tikzcd}
\]

\subsubsection{Transfer to Andre-Quillen homology}

By the same reasoning as for \eqref{eq: big diagram}, we have a commutative diagram
\begin{equation}\label{eq: deg 2 big diagram}
\begin{tikzcd}
\wedge^2 \msf{V}^*  \otimes_{\Cal{O}} \Lambda \ar[d, "\sim", "\S \ref{sssec: pi_1 isom}"'] &  \\
\pi_2(\Cal{R}_S \dotimes_{\Cal{O}} \Lambda ) \ar[d, "\text{co-act}"]  \ar[r] & D^2_{\Z}(\Cal{R}_S  ; \Lambda )^{*}  \ar[d, "\text{co-act}"] \\
\pi_2( \Cal{R}_S   \otimes_{\mrm{S}_q^{\ur}} \Cal{S}_q^{\Hk} \dotimes_{\Cal{O}} \Lambda )   \ar[d, "\mrm{project}"]  \ar[r] & D^2_{\Z}( \Cal{R}_S   \dotimes_{\mrm{S}_q^{\ur}} \Cal{S}_q^{\Hk};\Lambda)^{*}   \ar[d, "\mrm{project}"]  \\
\pi_2(\Cal{S}_q^{\Hk} \dotimes_{\mrm{S}_q^{\ur}} \pi_0\Cal{R}_S \dotimes_{\Cal{O}} \Lambda ) \ar[r] \ar[d, "\sim"', "\eqref{eq: SHA = group homology}"] &   D^2_{\Z}(\Cal{S}_q^{\Hk} \dotimes_{\mrm{S}_q^{\ur}} \pi_0\Cal{R}_S ;\Lambda )^{*}  \ar[d, "\sim"', "\eqref{eq: SHA = group homology}"]    \\
\pi_2(\Lambda \dotimes_{\Lambda[T_q]} \Lambda) \ar[r] \ar[d, "\text{Cor. \ref{cor: homotopy groups of SHA}}", "\sim"'] & D_{\Z}^2(\Lambda \dotimes_{\Lambda[T_q]} \Lambda; \Lambda)^*     \ar[d, "\S \ref{sssec: t_2(Fib)}", "\sim"'] \\
H_2(T_q; \Lambda)  &   H_2(T_q; \Lambda)_{\mrm{prim}}
\end{tikzcd}
\end{equation}

As described above, the map in \eqref{eq: primitive dual deg 2} is obtained by starting with $\wedge^2 \msf{V}^*  \otimes_{\Cal{O}} \Lambda $ and then tracing downwards along the left edge of the diagram, and then projection to $H_2(T_q; \Lambda)_{\mrm{prim}}$. By Proposition \ref{prop: hurewicz isom for SHA}, the map $\pi_2(\Lambda \dotimes_{\Lambda[T_q]} \Lambda) \rightarrow D_{\Z}^2(\Lambda \dotimes_{\Lambda[T_q]} \Lambda; \Lambda)^*$ is an isomorphism. Therefore, to show that \eqref{eq: primitive dual deg 2} is $0$ it suffices to show that tracing downwards along the right edge of the diagram also gives $0$. But by \cite[Lemma 15.1]{GV} we have
\[
D^2_{\Z}(\Cal{R}_S; \Lambda)\cong H_f^3(\Z[1/S]; \Ad \rho_{\Lambda})
\]
and the latter vanishes because its $\Lambda$-dual is a subspace of $H^0(\Z[1/S]; \Ad^* \rho_{\Lambda}(1))$ by global duality for Galois cohomology \cite[Theorem B.1]{GV}, which vanishes by our assumptions that $\ol{\rho}$ is irreducible, and $G$ is semisimple.

\appendix

\section{Some simplicial commutative algebra} 
\subsection{Free simplicial commutative algebras}


 Recall that the forgetful functor $U$ from simplicial commutative rings to simplicial sets admits a left adjoint $F$ which fits into a Quillen adjunction. Given a simplicial set $X_{\bu}$, we call $FX = \Z[X_{\bu}]$ the ``free simplicial commutative ring on $X_{\bu}$''. This can be described explicitly -- see \cite[\S 4.1]{Iy07}. The analogous facts hold for simplicial $R$-algebras. Given a discrete ring $R$, the ``free simplicial $R$-algebra on a generator degree $n$'' is obtained by taking the free $R$-algebra on a simplicial set corresponding to the $n$-sphere $S^n$, and more generally we can perform this construction iteratively to form a ``free simplicial $R$-algebra on a set of a generators''.

\begin{lemma}\label{lem: homotopy of free}
Let $R$ be a discrete ring and $R[x_1, y_2]$ the free simplicial commutative ring on a generator $x_1$ in degree $1$ and $y_2$ in degree $2$. Then 
\[
\pi_*(R) = \wedge_R^* \langle x_1 \rangle \otimes \Gamma_R^* \langle x_2 \rangle
\]
where $\Gamma_R^*$ denotes the divided power algebra. 
\end{lemma}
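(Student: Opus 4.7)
The strategy is to reduce to single-variable cases and invoke classical computations in simplicial commutative algebra. Since $R[x_1,y_2]$ is the coproduct in simplicial commutative $R$-algebras of $R[x_1]$ and $R[y_2]$, and both factors are cofibrant (being free), this coproduct agrees with the derived tensor product $R[x_1]\dotimes_R R[y_2]$. Once $\pi_*(R[x_1])$ is known to be free over $R$, Example \ref{ex: degenerate edge map} applied to the Künneth spectral sequence \eqref{eq: htpy of tensor product} gives
\[
\pi_*(R[x_1,y_2]) \cong \pi_*(R[x_1]) \otimes_R \pi_*(R[y_2]),
\]
reducing the lemma to the two single-variable computations.

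For each of these, I would reduce to the universal case $R=\Z$ via $R[x_n]\simeq R\dotimes_\Z \Z[x_n]$: since the expected answers $\wedge_\Z^*\langle x_1\rangle$ and $\Gamma_\Z^*\langle y_2\rangle$ are both $\Z$-free, Example \ref{ex: degenerate edge map} again collapses the spectral sequence and gives $\pi_*(R[x_n]) = R\otimes_\Z \pi_*(\Z[x_n])$. To compute $\pi_*(\Z[x_n])$ itself, one models $\Z[x_n] = \Sym^*_\Z(\widetilde{\Z}[S^n])$, where $\widetilde{\Z}[S^n]$ is the reduced free simplicial abelian group on the pointed simplicial set $S^n = \Delta^n/\partial\Delta^n$ (an object of Eilenberg--MacLane type $K(\Z,n)$ under Dold--Kan). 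So the task becomes the computation of the derived symmetric algebra $L\Sym^*_\Z(\Z[n])$ for $n=1,2$.

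For $n=1$, a direct Moore-complex calculation shows that $e^2$ is a boundary: for the non-degenerate $1$-cell $e$ representing $x_1$, one has (up to sign) $\partial(s_0 e \cdot s_1 e) = e^2$, so $x_1^2$ vanishes in $\pi_2$, and with a little more work one sees that $L\Sym^k_\Z(\Z[1])=0$ for $k\geq 2$, leaving $\pi_*(\Z[x_1])=\wedge_\Z^*\langle x_1\rangle$. The case $n=2$ is Cartan's classical theorem on divided powers (equivalently, the computation of $H_*(K(\Z,2);\Z)$), which furnishes higher divided-power generators $y_2^{[k]}\in\pi_{2k}(\Z[y_2])$ satisfying $y_2^{[k]} y_2^{[m]} = \binom{k+m}{k}\, y_2^{[k+m]}$. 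The hardest part is therefore the $n=2$ case: rather than construct the $y_2^{[k]}$ and verify the divided-power relations from scratch, the plan is to invoke the classical identification of $L\Sym^*_\Z(\Z[2])$ with $\Gamma_\Z^*\langle y_2\rangle$ (cf.\ \cite[\S 4.1]{Iy07}).
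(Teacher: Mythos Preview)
Your argument is correct. The paper's own proof, however, is a one-line citation to \cite[Corollary 7.30]{Qui68}, which computes $\pi_*$ of a free simplicial commutative $R$-algebra on generators in arbitrary degrees. Your route unpacks the special case needed here: you split $R[x_1,y_2]$ as the derived tensor product $R[x_1]\dotimes_R R[y_2]$, collapse the K\"unneth spectral sequence \eqref{eq: htpy of tensor product} via freeness, base-change to $R=\Z$ by the same mechanism, and then treat $\pi_*(\Z[x_1])$ and $\pi_*(\Z[y_2])$ separately by classical computations. In substance this is a proof of the relevant instance of Quillen's corollary rather than a genuinely different argument; your write-up has the virtue of being self-contained modulo Cartan's theorem on divided powers, while the paper's citation buys brevity and the general statement in one stroke. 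One small point worth tightening: exhibiting $x_1^2$ as a boundary does not by itself show that $\pi_*(\Z[x_1])$ is concentrated in degrees $\le 1$; you should either invoke the d\'ecalage $\LL\Sym^k(\Z[1])\simeq (\wedge^k\Z)[k]=0$ for $k\ge 2$, or simply point to the same Quillen/Illusie reference that you are already relying on for the degree-$2$ generator.
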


\begin{proof}
This follows from \cite[Corollary 7.30]{Qui68}.
\end{proof}

\begin{lemma}\label{lem: freeness of SHA}
Assume $q \equiv 1 \in \Lambda$. Then the algebra $\Cal{S}_q^{\Hk} \dotimes_{\Cal{O}} \Lambda$ is free over $\mrm{S}_q^{\ur} \otimes_{\Cal{O}} \Lambda$ on $r$ generators in degree $1$ and $r$ generators in degree $2$, where $r = \rank(G)$. 
\end{lemma}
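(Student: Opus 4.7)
First I would reduce to a cyclic case. By~\eqref{eq: SHA = group homology} we have a weak equivalence $\Cal{S}_q^{\Hk} \dotimes_{\Cal{O}} \Lambda \approx (\mrm{S}_q^{\ur} \otimes_{\Cal{O}} \Lambda) \otimes_\Lambda (\Lambda \dotimes_{\Lambda[T_q]} \Lambda)$, and since $\mrm{S}_q^{\ur} \otimes_{\Cal{O}} \Lambda$ is a flat discrete $\Lambda$-algebra, tensoring with it carries free simplicial commutative $\Lambda$-algebras on a given set of generators to free simplicial commutative $(\mrm{S}_q^{\ur} \otimes_{\Cal{O}} \Lambda)$-algebras on the same generators. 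Thus it suffices to show that $\Lambda \dotimes_{\Lambda[T_q]} \Lambda$ is free as a simplicial commutative $\Lambda$-algebra on $r$ generators in each of degrees $1$ and $2$. The decomposition $T_q \cong (\Z/p^N)^r$ (where $p^N$ is the highest power of $p$ dividing $q-1$) yields $\Lambda[T_q] \cong \bigotimes_{i=1}^r \Lambda[\Z/p^N]$ as $\Lambda$-algebras, and hence
\[
\Lambda \dotimes_{\Lambda[T_q]} \Lambda \; \approx \; \bigotimes_{i=1}^r \bigl( \Lambda \dotimes_{\Lambda[\Z/p^N]} \Lambda \bigr).
\]
Since tensor products of free simplicial commutative $\Lambda$-algebras are free on the disjoint union of their generators, the problem reduces to the case $r=1$.

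For $r=1$, I would construct a map $\phi : \Lambda\{x_1, y_2\} \to \Lambda \dotimes_{\Lambda[\Z/p^N]} \Lambda$ from the free simplicial commutative $\Lambda$-algebra on one generator in each of degrees $1$ and $2$, by sending $x_1$ and $y_2$ to chosen $\Lambda$-generators of $\pi_1$ and $\pi_2$ of the target respectively. The hypothesis $q \equiv 1 \in \Lambda$ forces $p^m \mid p^N$, and the standard periodic free resolution of $\Lambda$ over $\Lambda[\Z/p^N]$ then gives $\pi_k(\Lambda \dotimes_{\Lambda[\Z/p^N]} \Lambda) = H_k(\Z/p^N; \Lambda) \cong \Lambda$ for every $k \geq 0$, so such generators exist. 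By Lemma~\ref{lem: homotopy of free}, the source has $\pi_* = \wedge_\Lambda^*\langle x_1\rangle \otimes \Gamma_\Lambda^*\langle y_2\rangle$, which is also free of rank $1$ over $\Lambda$ in every degree. Hence both sides of $\phi$ have the same Hilbert series, and it suffices to check that $\phi$ is surjective on $\pi_*$.

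The main obstacle is the identification of the algebra structure on $\pi_*(\Lambda \dotimes_{\Lambda[\Z/p^N]} \Lambda) = H_*(\Z/p^N; \Lambda)$, enough to conclude that it is generated by its degree-$1$ and degree-$2$ parts. This algebra structure is the Pontryagin product dual to the coproduct on $H^*(\Z/p^N;\Lambda)$ induced by the abelian group structure on $\Z/p^N$. Our assumptions force $p > \#W \geq 2$, so $p$ is odd, and the classical computation $H^*(\Z/p^N;\F_p) = \F_p[\beta] \otimes \wedge[\alpha]$ (with $|\alpha|=1$, $|\beta|=2$, both primitive for the Hopf algebra structure) dualizes to $H_*(\Z/p^N;\F_p) \cong \wedge_{\F_p}[b_0] \otimes \Gamma_{\F_p}[a_1]$ with $|b_0|=1$, $|a_1|=2$. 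The analogous description of $H_*(\Z/p^N;\Lambda)$ over $\Lambda$ follows by a universal coefficient argument, and in particular the target is generated as a $\Lambda$-algebra by its degree-$1$ and degree-$2$ parts. Hence $\phi$ sends the generators $x_1,y_2$ to algebra generators of the target; by the Hilbert series comparison, $\phi$ is an isomorphism on $\pi_*$ and therefore a weak equivalence, completing the proof.
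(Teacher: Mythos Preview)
Your proof is correct and follows essentially the same approach as the paper: reduce via \eqref{eq: SHA = group homology} and the tensor decomposition of $T_q$ to the cyclic case $r=1$, then build a map from the free simplicial commutative $\Lambda$-algebra on generators in degrees $1$ and $2$ and check it is a weak equivalence by comparing homotopy groups using Lemma~\ref{lem: homotopy of free}. The paper simply asserts as well-known that $H_*(\Z/p^N;\Lambda) \cong \wedge_\Lambda\langle x_1\rangle \otimes \Gamma_\Lambda\langle y_2\rangle$ as a $\Lambda$-algebra and reads off the isomorphism directly, whereas you supply extra justification for this via the Hopf-algebra duality with $H^*(\Z/p^N;\F_p)$ and finish with a Hilbert-series plus surjectivity argument; these are cosmetic differences rather than a genuinely different route.
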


\begin{proof}
By \eqref{eq: SHA = group homology} it suffices to show that $\Lambda \dotimes_{\Lambda[G]} \Lambda$ is free over $\Lambda$ on generators in degree $1$ and $2$, where $G = (\Z/p^n)^r$, and $\Lambda = \Z/p^m$ with $m \leq n$. 

By the compatibility of the claim with tensor products, we reduce to the case $r=1$, so $G = (\Z/p^n\Z)$. The group homology of cyclic groups is well-known, and in this case we have a $\Lambda$-algebra isomorphism. 
\begin{equation}\label{eq: homology of cyclic groups}
H_*(G; \Lambda) = \wedge_{\Lambda}^* \langle x_1 \rangle \otimes \Gamma_{\Lambda}^* \langle y_2 \rangle.
\end{equation}

The choice of generators $x_, x_2$ above induces a map from the free simplicial $\Lambda$-algebra on generators in $x_1'$ in degree 1 and $x_2'$ in degree 2: 
\[
\Lambda[x_1',y_2'] \rightarrow \Lambda \dotimes_{\Lambda[G]} \Lambda
\]
sending $x_1' \mapsto x_1$ and $x_2' \mapsto x$. This induces an isomorphism on homotopy groups by \eqref{eq: homology of cyclic groups} and Lemma \ref{lem: homotopy of free}, and is therefore a weak equivalence. 
\end{proof}

Now we contemplate the Hurewicz map from \S \ref{sssec: pi_1 isom} for $\Cal{S}_q^{\Hk}$. We take our augmentation to be the composition  
\[
\epsilon \co \Cal{S}_q^{\Hk} \rightarrow \pi_0(\Cal{S}_q^{\Hk} ) = \mrm{S}_q^{\ur} \xrightarrow{\chi} \Cal{O}.
\]
For a discrete $\Cal{O}$-module $M$, it gives a pairing
\begin{equation}\label{eq: hurewicz spectral hecke}
\pi_i(\Cal{S}_q^{\Hk} \dotimes_{\Cal{O}} \Lambda) \times  \Lift_{\epsilon}(\Cal{S}_q^{\Hk}, \Cal{O} \oplus  M[i]) \rightarrow  M.
\end{equation}
Note that $\Lift_{\epsilon}(\Cal{S}_q^{\Hk} , \Cal{O} \oplus  M[i])$ can be identified with $D_i^{\Z}(\Cal{S}_q^{\Hk}, M)$.

\begin{prop}\label{prop: hurewicz isom for SHA}
Assume $q \equiv 1 \in \Lambda$. Then the map 
\[
\pi_i(\Cal{S}_q^{\Hk} \dotimes_{\Cal{O}} \Lambda) \rightarrow \underbrace{D^i_{\Z}(\Cal{S}_q^{\Hk}; \Lambda)^*}_{\text{$\Lambda$-linear dual of $D^i_{\Z}(\Cal{S}_q^{\Hk}; \Lambda)$}},
\]
induced by \eqref{eq: hurewicz spectral hecke}, is an isomorphism for $i=1,2$. 
\end{prop}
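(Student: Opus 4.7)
The plan is to reduce, via Lemma \ref{lem: freeness of SHA}, to a computation for a free simplicial commutative algebra, where both homotopy groups and Andr\'{e}-Quillen cohomology can be determined directly from the generators.

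First I will replace $\Cal{S}_q^{\Hk} \dotimes_{\Cal{O}} \Lambda$ by the weakly equivalent free simplicial commutative algebra $A_0 := R[x_1^{(1)}, \ldots, x_1^{(r)}, y_2^{(1)}, \ldots, y_2^{(r)}]$ over $R := \mrm{S}_q^{\ur} \otimes_{\Cal{O}} \Lambda$, as provided by Lemma \ref{lem: freeness of SHA}. Both $\pi_i$ and $D^i_{\Z}(-;\Lambda)$ depend only on the weak equivalence class, and the Hurewicz map is natural, so it suffices to establish the isomorphism for $A_0$. Note that $\mrm{S}_q^{\ur}$ is formally smooth over $\Cal{O}$ (it is a power series ring), so $\LL_{\mrm{S}_q^{\ur}/\Cal{O}}$ is a free $\mrm{S}_q^{\ur}$-module concentrated in degree $0$, and therefore contributes to $D^i_{\Z}(\Cal{S}_q^{\Hk};\Lambda)$ only in degree $0$.

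Next I will compute both sides of the map explicitly in degrees $1$ and $2$. The structure of $\pi_i(A_0)$ follows from Lemma \ref{lem: homotopy of free} together with a K\"{u}nneth argument, so the classes detected by the generators $x_1^{(i)}$ and $y_2^{(j)}$ are completely transparent. On the cohomology side, the cotangent complex of a free simplicial commutative algebra is a free module with basis the differentials of the generators in their respective degrees; assembling this with the transitivity triangle for the chain $\Z \to \Cal{O} \to R \to A_0$ and the Koszul identification $\LL_{\Lambda/\Cal{O}} \cong \Lambda[1]$ yields an explicit description of $D^i_{\Z}(A_0;\Lambda)$ in degrees $1$ and $2$ as a free module on the duals of the generators.

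Finally, by the very definition of the pairing \eqref{eq: hurewicz spectral hecke}, each generator in $\pi_i(A_0)$ pairs non-trivially only with the ``tautological'' lift $u: A_0 \to \Cal{O} \oplus \Lambda[i]$ that sends it to the canonical class in $\Lambda[i]$ (and sends the other generators to $0$); thus the Hurewicz map is realized as the dual basis isomorphism between the generators and their duals. The main obstacle in this argument is checking that no extra contributions to $D^i_{\Z}(A_0;\Lambda)$ arise beyond the duals of the chosen generators --- i.e., that the transitivity triangles collapse as expected --- which requires the careful rank count above and depends crucially on the smoothness of $\mrm{S}_q^{\ur}/\Cal{O}$.
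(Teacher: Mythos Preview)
Your overall strategy matches the paper's: reduce via Lemma~\ref{lem: freeness of SHA} to a free simplicial commutative algebra and compute both sides explicitly. The paper's execution is more streamlined: using additionally that $\mrm{S}_q^{\ur}$ is free over $\Cal{O}$, it strips off the base ring entirely and reduces to the free $\Lambda$-algebra $\Lambda[x_1,y_2]$ on a single generator in each of degrees $1$ and $2$. Then it observes that, by the universal property of free, a map $\Lambda[x_1,y_2]\to\Lambda\oplus\Lambda[i]$ is determined by the images of $x_1,y_2$; this immediately gives surjectivity of the Hurewicz map, and since both sides are finite and abstractly isomorphic to $\Lambda$, bijectivity follows. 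Your transitivity-triangle and dual-basis argument reaches the same destination with more bookkeeping.

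There is, however, a genuine slip in your reduction step. You replace $\Cal{S}_q^{\Hk}$ by $A_0\approx\Cal{S}_q^{\Hk}\dotimes_{\Cal{O}}\Lambda$ and assert that ``$D^i_{\Z}(-;\Lambda)$ depends only on the weak equivalence class'', hence that $D^i_{\Z}(\Cal{S}_q^{\Hk};\Lambda)=D^i_{\Z}(A_0;\Lambda)$. But $\Cal{S}_q^{\Hk}$ and $A_0$ are \emph{not} weakly equivalent (one is $\Cal{O}$-flat, the other is $p^m$-torsion), so this step is not justified. Indeed the term $\LL_{\Lambda/\Cal{O}}\cong\Lambda[1]$ you invoke is exactly the difference between the two cotangent complexes: if you compute $D^1_{\Z}(A_0;\Lambda)$ through the chain $\Z\to\Cal{O}\to R\to A_0$ and include that contribution, you get an extra copy of $\Lambda$ in degree $1$ beyond the $r$ generator-duals, and the rank count against $\pi_1(A_0)\cong\Lambda^r$ fails. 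The fix is to run the transitivity triangle for the chain $\Z\to\Cal{O}\to\mrm{S}_q^{\ur}\to\Cal{S}_q^{\Hk}$ (where $\LL_{\Lambda/\Cal{O}}$ never appears, since $\Lambda$ enters only as the coefficient module), or---as the paper does---to pass all the way down to $\Lambda[x_1,y_2]$ and use the Lift description directly, where the universal property makes the computation transparent.
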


\begin{proof}
By Lemma \ref{lem: freeness of SHA} and the fact that $\mrm{S}_q^{\ur}$ is free over $\Cal{O}$, it suffices to check that the analogous map
\begin{equation}\label{eq: free homotopy to AQ}
\pi_i(\Lambda[x_1, y_2]) \rightarrow D_{\Z}^i(\Lambda[x_1, y_2]; \Lambda)^* ,
\end{equation}
is an isomorphism for $i=1,2$. Note that 
\[
D^i_{\Z}(\Lambda[x_1, y_2]; \Lambda) \cong \Hom(\Lambda[x_1, y_2]; \Lambda \oplus \Lambda[i]).
\]
By freeness, a homomorphism $\Lambda[x_1, y_2] \rightarrow \Lambda \oplus \Lambda[i]$ is determined by where it sends $x_1, x_2$. This shows that \eqref{eq: free homotopy to AQ} is surjective in degrees $i=1,2$. Since all of these groups are isomorphic to $\Lambda$ by inspection, and this is finite, they are necessarily also isomorphisms. 
\end{proof}

\subsection{Compatibility of two identifications}\label{ssec: compatibility of identifications}

We will check that the diagram 
\[
\begin{tikzcd}
\pi_1(\Lambda \dotimes_{\Lambda[T_q]} \Lambda) \ar[r, "\S \ref{sssec: pi_1 isom}"] \ar[d, "\text{Cor. \ref{cor: homotopy groups of SHA}}", "\sim"'] & D_{\Z}^1(\Lambda \dotimes_{\Lambda[T_q]} \Lambda; \Lambda)^* \ar[d, "\sim"'] \ar[r, equals] & \mf{t}_1(\Fib_{q,\Lambda})^* \ar[d, "\sim"', "\S \ref{sssec: t_1(Fib)}"]  \\
H_1(T_q; \Lambda)  \ar[r, equals] & H_1(T_q; \Lambda) \ar[r, equals]  & H_1(T_q; \Lambda) 
\end{tikzcd}
\]
commutes. This is the bottom left subdiagram of \eqref{eq: big diagram}. (By Proposition \ref{prop: hurewicz isom for SHA} we know that the upper horizontal arrows are isomorphisms, but we are claiming that they are given by the \emph{identity} map under the vertical identifications.) 

The point is that we want to show that our identifications 
\[
\pi_1(\Lambda \dotimes_{\Lambda[T_q]} \Lambda)  \xrightarrow{\text{Cor. \ref{cor: homotopy groups of SHA}}} H_1(T_q; \Lambda) \quad \text{ and } \quad D_{\Z}^1(\Lambda \dotimes_{\Lambda[T_q]} \Lambda; \Lambda)^* \xrightarrow{\S \ref{sssec: t_1(Fib)}}  H^1(T_q; \Lambda)
\]
are intertwined by the map of \S \ref{sssec: pi_1 isom}.

Let us spell out the map $D_{\Z}^1(\Lambda \dotimes_{\Lambda[T_q]} \Lambda; \Lambda)^* \xrightarrow{\S \ref{sssec: t_1(Fib)}} H^1(T_q; \Lambda)$ in more detail. Let $\Lambda[\delta_n] = \Lambda \oplus \Lambda[n]$ be the square-zero extension in degree $n$, as in \S \ref{sssec: pi_1 isom}. Then we have (cf. \cite[proof of Lemma 3.11]{GV})
\begin{equation}\label{eq: square-zero loop}
\Lambda \stackrel{h}\times_{\Lambda[\delta_n]} \Lambda \approx \Lambda[\delta_{n-1}].
\end{equation}
What was used in \S \ref{sssec: t_1(Fib)} is that $D_{\Z}^1(\Lambda \dotimes_{\Lambda[T_q]} \Lambda; \Lambda)  \cong D_{\Z}^0(\Lambda[T_q]; \Lambda)$, which we now explicate:
\begin{align*}
D_{\Z}^0(\Lambda[T_q]; \Lambda)   & \cong 
\Lift_{\epsilon}(\Lambda[T_q], \Lambda[\delta_0]) \\
[\eqref{eq: square-zero loop} \implies] & \cong \Lift_{\epsilon}(\Lambda[T_q], \Lambda \stackrel{h}\times_{\Lambda[\delta_1]} \Lambda ) \\
[\text{universal property} \implies] & \cong \pi_0(\msf{pt} \stackrel{h}\times_{\msf{SCR}_{\epsilon}(\Lambda[T_q],\Lambda[\delta_1])} \msf{pt}) \\
&= \Lift_{\epsilon}(\Lambda \dotimes_{\Lambda[T_q]} \Lambda, \Lambda[\delta_1]) \\
 & \cong  D_{\Z}^1(\Lambda \dotimes_{\Lambda[T_q]} \Lambda; \Lambda)
\end{align*}
Writing $I \subset \Lambda[T_q]$ for the augmentation ideal over $\Lambda$, we have 
\[
\Lift_{\epsilon}(\Lambda[T_q], \Lambda[\delta_0]) \xrightarrow{\sim} \Hom(I/I^2, \Lambda)
\]
by restricting an augmented homomorphism $\Lambda[T_q] \rightarrow \Lambda[\delta_0]$ to $I$, where it factors through $I/I^2$. In turn, $\Hom(I/I^2, \Lambda)$ is identified $H^1(T_q; \Lambda)$ via the isomorphism $I/I^2 \xrightarrow{\sim} T_q \otimes_{\Z} \Lambda$ sending $[t]-[e] \mapsto t \otimes 1$. 

Next we recall how we are identifying $\pi_1(\Lambda \dotimes_{\Lambda[T_q]} \Lambda)^*  = H^1(T_q; \Lambda) \xrightarrow{\sim} I/I^2$. This comes from the homotopy fiber sequence of simplicial $\Lambda$-modules
\[
\begin{tikzcd}
I \dotimes_{\Lambda[T_q]} \Lambda  \ar[r] & \Lambda \ar[r] & \Lambda \dotimes_{\Lambda[T_q]} \Lambda 
\end{tikzcd}
\]
which induces
\[
\pi_1(\Lambda \dotimes_{\Lambda[T_q]} \Lambda) \xrightarrow{\sim} \pi_0(I \dotimes_{\Lambda[T_q]} \Lambda)  \xrightarrow{\sim} I/I^2 \otimes_{\Z} \Lambda \xrightarrow{\sim} T_q \otimes_{\Z} \Lambda.
\]

Finally, we will compare these identifications under the map $\pi_1(\Lambda 
\dotimes_{\Lambda[T_q]} \Lambda) \xrightarrow{\S \ref{sssec: pi_1 isom}} D_{\Z}^1(\Lambda 
\dotimes_{\Lambda[T_q]} \Lambda; \Lambda)^*$. An element of $D_{\Z}^1(\Lambda \dotimes_{\Lambda[T_q]} \Lambda; \Lambda)$ is the homotopy class of a $\Lambda$-augmented homomorphism $f' \co \Lambda \dotimes_{\Lambda[T_q]} \Lambda \rightarrow \Lambda[\delta_1]$. As discussed above, the computation of $D_{\Z}^1(\Lambda \dotimes_{\Lambda[T_q]} \Lambda; \Lambda)$ is based on the equivalence between the datum of $f'$ and the datum of a map $f \co \Lambda[T_q] \rightarrow \Lambda[\delta_0]$, which is equivalent to a map $I/I^2 = T_q \otimes_{\Z} \Lambda \rightarrow \Lambda$. We need to compute the effect of the map
\begin{equation}\label{eq: pi_1 map}
\pi_1(f') \co \pi_1(\Lambda \dotimes_{\Lambda[T_q]} \Lambda) \rightarrow \pi_1(\Lambda[\delta_1]) = \Lambda.
\end{equation}
For this we can forget the ring structure and compute at the level of simplicial $\Lambda$-modules. Then we have two exact triangles of simplicial $\Lambda$-modules:
\[
\begin{tikzcd}
I \dotimes_{\Lambda[T_q]} \Lambda  \ar[r] \ar[d] & \Lambda \ar[r] \ar[d] & \Lambda \dotimes_{\Lambda[T_q]} \Lambda \ar[d] \\
\Lambda[\delta_0] \ar[r] & \Lambda \ar[r] & \Lambda[\delta_1]
\end{tikzcd}
\]
and so \eqref{eq: pi_1 map} is identified with the map 
\[
\pi_0(f') \co \pi_0(I \dotimes_{\Lambda[T_q]} \Lambda )   \rightarrow  \wt{\pi}_0(\Lambda[\delta_0])   = \Lambda
\]
where $\wt{\pi}$ denotes reduced homology (i.e. removing the contribution from $\pi_0(\Lambda)$). This map can be read off from $f$: identifying $\pi_0(I \dotimes_{\Lambda[T_q]} \Lambda )  = I \otimes_{\Lambda[T_q] \Lambda} = I/I^2$, it is simply given by the restriction of $f$ to $I$ (which then factors through $I/I^2$). After a bit of unwrapping, one finds that this is exactly the desired compatibility. 





\bibliographystyle{amsalpha}
\bibliography{Bibliography}

   \end{document}